\theoremstyle{plain}
\newtheorem{theorem}{Theorem}[section]
\newtheorem{definition}[theorem]{Definition}
\newtheorem{lemma}[theorem]{Lemma}
\newtheorem{proposition}[theorem]{Proposition}
\newtheorem{remark}[theorem]{Remark}
\theoremstyle{definition}
\newtheorem{assumption}{Assumption}
\numberwithin{equation}{section}
\newcommand{\bb}[1]{\mathbb{#1}}
\newcommand{\bbr}{\bb{R}}
\newcommand{\bbi}{\bb{I}}
\newcommand{\bbn}{\bb{N}}
\newcommand{\bbp}{\bb{P}}
\newcommand{\bbb}{\bb{B}}
\newcommand{\bba}{\bb{A}}
\newcommand{\bbc}{\bb{C}}
\newcommand{\bbd}{\bb{D}}
\newcommand{\bbf}{\bb{F}}
\newcommand{\bbs}{\bb{S}}
\newcommand{\bbw}{\bb{W}}
\newcommand{\bu}{\mathbf{u}}
\newcommand{\bv}{\mathbf{v}}
\newcommand{\bw}{\mathbf{w}}
\newcommand{\bn}{\mathbf{n}}
\newcommand{\bJ}{\mathbf{J}}
\newcommand{\bT}{\mathbf{T}}
\newcommand{\btau}{\bm{\tau}}
\newcommand{\cD}{\mathcal{D}}
\newcommand{\cE}{\mathcal{E}}
\newcommand{\cF}{\mathcal{F}}
\newcommand{\cT}{\mathcal{T}}
\newcommand{\cL}{\mathcal{L}}
\newcommand{\cK}{\mathcal{K}}
\newcommand{\sR}{\mathscr{R}}
\newcommand{\tin}{\quad \text{in }}
\newcommand{\ton}{\quad \text{on }}
\renewcommand{\d}{\mathrm{d}}
\newcommand{\dx}{\,\d x}
\newcommand{\dt}{\,\d t}
\newcommand{\dxdt}{\,\d x \d t}
\newcommand{\dtau}{\,\d \tau}
\newcommand{\ddt}{\frac{\d}{\d t}}
\newcommand{\ptial}[1]{ \partial_{#1} }
\newcommand{\pt}{\ptial{t}}
\newcommand{\onehalf}{\frac{1}{2}}
\newcommand{\tr}{\mathrm{Tr}}
\newcommand{\tran}[1]{ #1^{\top}}
\newcommand{\inv}[1]{ #1^{-1}}
\newcommand{\abs}[1]{\left\vert #1 \right \vert}
\newcommand{\norm}[1]{\left\Vert #1 \right \Vert}
\newcommand{\normm}[1]{\Vert #1 \Vert}
\newcommand{\inner}[2]{\left\langle #1 , #2 \right\rangle}
\newcommand{\innerr}[2]{\left( #1 , #2 \right)}
\newcommand{\arxiv}[1]{arXiv: \href{https://arxiv.org/abs/#1}{#1}}
\DeclareMathOperator*{\Div}{\mathrm{div}}
\begin{document}

 \title[Two-phase viscoelastic fluids]{On a diffuse interface model for incompressible viscoelastic two-phase flows}

    \author[Y. Liu \& D. Trautwein]{
            \small
            Yadong Liu$^{\ast}$$^\dagger$ and 
            Dennis Trautwein$^\dagger$
            }
        \address{
	   $^\ast$School of Mathematical Sciences, 
	   Nanjing Normal University, 
	   Nanjing 210023, P.~R.~China
        }
        \email{ydliu@njnu.edu.cn (Yadong.Liu@ur.de)}
        \address{
            $^\dagger$Fakult\"at f\"ur Mathematik, Universit\"at Regensburg, 93053 Regensburg, Germany
        }
        
        \email{Dennis.Trautwein@ur.de}

	\date{\today}
	
	\subjclass[2020]{35Q35, 76A10, 76T99, 35Q30, 35D30}
	\keywords{Two-phase flow, Viscoelasticity, Oldroyd-B model, Diffuse interface model, Navier--Stokes--Cahn--Hilliard equation.}

	\begin{abstract}
		This paper concerns a diffuse interface model for the flow of two incompressible viscoelastic fluids in a bounded domain.
		More specifically, the fluids are assumed to be macroscopically immiscible, but with a small transition region, where the two components are partially mixed. Considering the elasticity of both components, one ends up with a coupled Oldroyd-B/Cahn--Hilliard type system, which describes the behavior of two-phase viscoelastic fluids. 
		We prove the existence of weak solutions to the system in two dimensions for general (unmatched) mass densities, variable viscosities, different shear moduli, and a class of physically relevant and singular free energy densities that guarantee that the order parameter stays in the physically reasonable interval. The proof relies on a combination of a regularization of the original system and a new hybrid implicit time discretization for the regularized system together with the analysis of an Oldroyd-B type equation.	
	\end{abstract}
	
	\maketitle
	
	
\section{Introduction}
    \label{sec:introduction}
	In this article, we study a so-called \textit{diffuse interface model} (also called \textit{phase field model}) for two incompressible, viscoelastic fluids of different mass densities, viscosities and shear moduli. In the model, a partial mixing of the macroscopically immiscible fluids is considered and elastic effects are taken into account. 
	
	This model is quite new and was developed recently in Mokbel--Abels--Aland \cite{MAA2018}, where they proposed a novel phase-field model for a fluid-structure interaction problem to handle very large deformations as well as topology changes like the contact of a solid to a wall. Under certain assumptions on the system parameters, the model is {capable of describing} a thermodynamically consistent, frame indifferent,
	incompressible two-phase flow with viscoelasticity of Oldroyd-B type.

	Let $ T > 0 $, $ Q_T \coloneqq \Omega \times (0,T) $ with $ \Omega \subset \bbr^d $, $d\in\{2,3\}$, a sufficiently smooth bounded domain and~$ {S_T \coloneqq \partial \Omega \times (0,T) }$. We consider the following system of Oldroyd-B/Cahn--Hilliard type:
	\begin{subequations}
		\label{eqs:Model}
		\begin{alignat}{3}
			\label{eqs:fluid-momentum}
			\begin{split}
				\pt (\rho(\phi) \bu) + \Div ( \rho(\phi) \bu \otimes \bu ) & + \Div ( \bu \otimes \bJ ) + \nabla \pi \\
				- \Div \big( \bbs(\nabla \bu, \bbb, \phi) \big) & = {q \nabla \phi + \frac{\mu(\phi)}{2} \nabla \tr(\bbb - \ln \bbb - \bbi)}
			\end{split} && \tin Q_T, \\
			\Div \bu & = 0 && \tin Q_T, \\
			\label{eqs:fluid-B}
			\pt \bbb + \bu \cdot \nabla \bbb + \frac{\alpha(\phi)}{\lambda(\phi)} (\bbb - \bbi) & = \bbb \tran{\nabla} \bu + \nabla \bu \bbb
			+{ \frac{\kappa}{\mu(\phi)} \Delta \bbb} 
			&& \tin Q_T, \\
			\label{eqs:CH-phi}
			\pt \phi + \bu \cdot \nabla \phi & = \Div (m(\phi) \nabla q) && \tin Q_T, \\
			\label{eqs:CH-q}
			q - \tilde{\sigma} \Big( \frac{1}{\epsilon} W'(\phi) - \epsilon \Delta \phi \Big) & = \frac{\mu'(\phi)}{2} \tr(\bbb - \ln \bbb - \bbi) && \tin Q_T,
		\end{alignat}
    {where $\mathrm{Tr} \, \ln \mathbb{B} = \ln \det \mathbb{B}$ for positive definite matrices}. 
    Moreover, $ \bJ $ denotes the relative mass flux associated with the diffusion of the mixture components given by
	\begin{equation*}
	    \bJ \coloneqq - \rho'(\phi) m(\phi) \nabla q
	\end{equation*}
	and the stress tensor $ \bbs $ is defined by
	\begin{equation*}
		\bbs(\nabla \bu, \bbb, \phi) \coloneqq \nu(\phi)(\nabla \bu + \tran{\nabla} \bu) + \mu(\phi)(\bbb - \bbi).
	\end{equation*}
	Let $\phi_i$ be the volume fraction of fluid $i$, $i \in \{1,2\}$. Define $ \phi \coloneqq \phi_2 - \phi_1 $ as the order parameter related to the concentrations of the two fluids. {Namely, the values $\phi = \pm1$ indicate the unmixed ``pure'' phases of the fluid}. 
 Based on the order parameter~${\phi, \bu}$ and~$ \rho(\phi) $ are the unknown (volume-averaged) velocity and the density of the mixture of the two fluids given by
	\begin{equation*}
	    \bu \coloneqq \frac{1 - \phi}{2} \bu_1 + \frac{1 + \phi}{2} \bu_2, \quad 
	    \rho(\phi) \coloneqq \frac{1 - \phi}{2} \rho_1 + \frac{1 + \phi}{2} \rho_2,
	\end{equation*}
	where $ \bu_i $, $ \rho_i $, $i \in \{1,2\}$ are the specific velocities and densities of fluid $i$.
	Moreover, {the tensor $ \bbb $ is linked to the macroscopic deformation of the viscoelastic fluid. In the context of multiple virtual configurations, the tensor $\bbb$ can be interpreted as the left Cauchy--Green tensor associated with the elastic part of the full deformation, cf. Appendix \ref{sec:derivation}}. Here $ \pi $ is the pressure, and~$ q $ denotes the chemical potential associated to $ \phi $. In system \eqref{eqs:Model}, $ \nu(\phi) > 0 $ denotes the viscosity coefficient, $ m(\phi) > 0 $ is a (non-degenerate) mobility coefficient, $ \mu(\phi) > 0 $ is the shear {modulus}, {and $\kappa>0$ is constant related to the stress diffusion}. The ratio $ \lambda(\phi)/\alpha(\phi) > 0 $ refers to the relaxation time of elasticity, which is supposed to be phase-dependent in the case of fluid-structure interaction \cite{MAA2018}.
	Furthermore,~$ W(\phi) $ is the homogeneous free energy density for the mixture, which is of double-well type. One of the typical examples is the logarithmic potential
	\begin{equation*}
	    W(\phi) = \frac{\theta}{2}\big((1 + \phi) \ln(1 + \phi) + (1 - \phi) \ln(1 - \phi)\big) - \frac{\theta_c}{2} \phi^2,
	\end{equation*}
	defined in $ [-1, 1]$, which leads to a physically relevant value $\phi \in [-1,1]$, with $ 0 < \theta < \theta_c $ being the absolute temperature and the critical temperature of the mixture. 
	The system is closed with the boundary and initial conditions
	\begin{alignat}{3}
		\bu & = \mathbf{0} && \ton S_T, \\ 
		\ptial{\bn} \phi = \ptial{\bn} q & = 0 && \ton S_T, \\
        \ptial{\bn} \bbb & = \mathbb{O} && \ton S_T, \\ 
		(\bu, \bbb, \phi)(0) & = (\bu_0, \bbb_0, \phi_0) && \tin \Omega,
	\end{alignat}
	where $ \partial_\bn \coloneqq \bn \cdot \nabla $ and $ \bn $ denotes the exterior normal at $ \partial \Omega $. {Moreover, $\mathbf{0}\in\bbr^d$ and $\mathbb{O}\in\bbr^{d\times d}$ are the zero vector and zero matrix, respectively.} 
	\end{subequations}

    {
    The goal of this contribution is to investigate the global existence of weak solutions 
    in two dimensions.} 
	Before stating the main results, let us mention that the total energy of the system \eqref{eqs:Model} denoted by $ \cE(t) $ consists of the kinetic energy, the {elastic energy} 
 and the Ginzburg--Landau free energy as
	\begin{equation}
		\label{eqs:energy}
		\cE(\bu, \phi, \bbb) \coloneqq \underbrace{\int_\Omega \frac{\rho(\phi)}{2} \abs{\bu}^2 \dx}_{\text{Kinetic energy}} 
		+ \underbrace{\int_\Omega \frac{\mu(\phi)}{2} \tr(\bbb - \ln \bbb - \bbi) \dx}_{\text{Elastic energy}} 
		+ \underbrace{\int_\Omega \tilde{\sigma} \Big( \frac{\epsilon}{2} \abs{\nabla \phi}^2 + \frac{1}{\epsilon} W(\phi) \Big) \dx}_{\text{Free energy}},
	\end{equation}
    where $\tilde{\sigma} > 0$ is related to the surface tension at the interface and $ \epsilon > 0 $ corresponds to the thickness of the interface.
	Moreover, every sufficiently smooth solution of \eqref{eqs:Model} satisfies the energy dissipation differential inequality
	\begin{equation}
    \label{eqs:energy-dissipation-structure}
    \begin{aligned}
        \ddt \cE(\bu, \phi, \bbb)(t) 
        & = - \int_\Omega \frac{\nu(\phi)}{2} \abs{\nabla \bu + \tran{\nabla} \bu}^2 \dx
        {+ \frac{\kappa}{2} \int_\Omega \nabla \bbb : \nabla \bbb^{-1} \dx }
        \\
        & \quad - \int_{\Omega \setminus \{\lambda = 0\}} \frac{\mu(\phi)\alpha(\phi)}{2 \lambda(\phi)} \tr(\bbb + \inv{\bbb} - 2 \bbi) \dx
        - \int_\Omega m(\phi) \abs{\nabla q}^2 \dx
        \leq 0,
    \end{aligned}
	\end{equation}
    {where $-\nabla \bbb : \nabla \bbb^{-1} \geq \frac{1}{d} \abs{\nabla\tr\ln\bbb}^2$ {is non-negative}, see \cite[Lemma 3.1]{BLS2017}.} 
	This is referred to \cite{MAA2018} with $\kappa =0$, where the model was derived via local energy dissipation laws.  
	As they did not provide a detailed comprehension of the elasticity, we give a complete derivation with a more general energy density in Appendix \ref{sec:derivation}.
	Note that the model \eqref{eqs:Model} can be seen as a viscoelastic fluid counterpart of the celebrated Abels--Garcke--Gr\"un (AGG) model \cite{AGG2012} where a diffuse interface model was proposed for a two-phase flow of two incompressible fluids with different densities using methods from rational continuum mechanics, which satisfies local and global dissipation inequalities and is frame indifferent.


    For the purpose of readability, in the following we will let~${ \alpha(\cdot) = \lambda(\cdot) = \epsilon = \tilde{\sigma} = 1 }$ since they have no significant contribution to the analysis.
    
\subsection{State of the Art} 
\label{sec:state-of-the-art}
Over the last decades mathematical analysis of fluid dynamics has been developed with an abundant amount of literature. In particular, the problem of two-phase fluids with a free interface fascinated the profound attention of mathematicians. However, the free interface is not easy to track in actual applications and mathematicians still do not have a thorough comprehension of the problems concerning singularities of interfaces and topological changes during the evolution of interfaces. Thus, an effective approximation of the interface has been introduced with the Ginzburg--Landau free energy since, e.g., \cite{GPV1996,HH1977}, where the authors proposed a so-called ``model H'' by the phase-field method in terms of the phase variable that indicates the specific phase of the whole system. Later on, this model was investigated and employed explosively in many areas, especially in numerical simulations, since the model has the advantage that topological changes can happen. However, model H was endowed with a basic assumption that the densities of both components are the same, which is not always physically reasonable. To overcome this disadvantage, Abels--Garcke--Gr\"un \cite{AGG2012} derived the ``AGG model'', which considers unmatched densities and is thermodynamically consistent and frame indifferent. For the analysis of weak solutions and strong solutions to the AGG model, we refer to \cite{ADG2013,ADG2013degenerate,AGG2022,Giorgini2021}. An alternative and thermodynamically consistent model was derived before by Lowengrub--Truskinovski \cite{LT1998} by using the mass-averaged (barycentric) velocity. From the mathematical viewpoint, the latter has the disadvantage that the mass-averaged velocity is not divergence free, which is the case in the AGG model based on a volume-averaged velocity. In view of the physical advantages of the AGG model, there came many variants subsequently concerning different aspects. Readers are referred, for example, to \cite{Frigeri2016} for the nonlocal Cahn--Hilliard--Navier--Stokes equations with unmatched densities, to \cite{GGW2019,GK2022} for diffuse interface models including moving contact lines, to \cite{Sieber2020} for polymeric fluids, to \cite{KMS2021} for magnetohydrodynamic two-phase flow with different densities, and to \cite{MAA2018} for a diffuse interface model simulating a fluid-structure interaction problem with viscoelasticity of Oldroyd-B type, which is exactly of our interest.

Now, let us recall some facts about viscoelasticity, in particular of Oldroyd-B type. Depending on the deformation gradient $\mathbf{F}$ that is defined in Eulerian coordinates or the corresponding left Cauchy--Green tensor $\bbb \coloneqq \mathbf{F} \mathbf{F}^\top$, viscoelasticity can be recognized as two regimes. 
{Note that this definition of $\bbb$ here differs slightly from the one used in the current context. It corresponds to the definition of the model \eqref{eqs:Model} with an infinite relaxation time $\lambda(\phi)$.}
In the context of $\mathbf{F}$, the incompressible viscoelastic fluid model reads as
\begin{equation}
    \label{eqs:OldroydB-F}
    \left\{
        \begin{aligned}
            & \pt \bu + \bu \cdot \nabla \bu - \mu \Delta \bu + \nabla p = \Div (\mathbf{F} \mathbf{F}^\top), \\
            & \pt \mathbf{F} + \bu \cdot \nabla \mathbf{F} = \nabla \bu \mathbf{F}, \\
            & \Div \bu = 0,
        \end{aligned}
    \right.
\end{equation}
whose derivation is referred to, e.g., Giga--Kirshtein--Liu \cite{GKL2018} by the energy variational approach. The analysis of \eqref{eqs:OldroydB-F} has been quite popular in the last decade, and the global well-posedness and long time behavior have been established in, e.g., \cite{HW2015,LeiLZ2008,LinLZ2005,LZ2008} and their citations. However, due to the lack of compactness for the deformation gradient, problems concerning the existence of weak solutions are still open even in two dimensions. We refer to two recent progressive papers by Hu--Lin \cite{HL2016Lp} for the existence of 2D small weak solutions in $ L^p $, where the so-called \textit{effective viscous flux} method was employed. Based on this viscoelastic approach, some very interesting models that describe the motion of two-phase flows have been introduced in very recent years. Readers are referred to Agosti--Colli--Garcke--Rocca \cite{ACGR2022} for a phase-field model coupled with viscoelasticity with large deformations, and Kim--Tawri--Temam \cite{KTT2022} for a diffuse interface model describing the interaction between blood flow and a thrombus with Hookean elasticity during the stage of atherosclerotic lesion in human artery.

When it comes to the tensor $\bbb$, 
with relaxation and $\btau \coloneqq \bbb - \bbi$, we obtain the original Oldroyd-B model as
\begin{equation}
    \label{eqs:OldroydB-B}
    \left\{
        \begin{aligned}
            & \pt \bu + \bu \cdot \nabla \bu - \mu \Delta \bu + \nabla p = \Div \btau, \\
            & \pt \btau + \bu \cdot \nabla \btau - \nabla \bu \btau - \btau \nabla^\top \bu + a \btau = \frac{b}{2} (\nabla \bu + \tran{\nabla} \bu), \\
            & \Div \bu = 0,
        \end{aligned}
    \right.
\end{equation}
where $a,b \geq 0$ and $1/a$ denotes the relaxation time and $b/a$ is the viscosity of the polymers. For a derivation, we refer to \cite{MP2018,Oldroyd1950,RT2021}. It can also be recovered from the micro-macroscopic FENE (Finite Extensible Nonlinear Elastic) dumbbell model, see, e.g., \cite{GKL2018}. Concerning the analysis of \eqref{eqs:OldroydB-B}, the first existence result of weak solutions goes back to Lions--Masmoudi \cite{LM2000}, where they considered a co-rotational version of the stress tensor equation to get better estimates and compactness. Due to lack of compactness, the existence of weak solutions to \eqref{eqs:OldroydB-B} has been an open problem for a long time. We mention a breakthrough on a related model, for which Masmoudi \cite{Masmoudi2013} proved the existence of weak solutions to the FENE dumbbell model with many weak convergence techniques, based on the control of the propagation of strong convergence of some well-chosen quantity by studying a transport
equation for its defect measure. Instead of solving \eqref{eqs:OldroydB-B} directly, in recent years mathematicians tried to include some regularization terms for the Oldroyd-B equation, for example the diffusive stress $\Delta \btau$, in order to obtain higher regularity and compactness, which is however still limited to two dimensions. Barrett--Boyaval \cite{BB2011} proved the existence of weak solutions to a regularized Oldroyd-B model with stress diffusion in 2D by employing an approximating finite element scheme, together with an entropy regularization, while the global regularity of 2D diffusive Oldroyd-B equations was obtained by Constantin--Kliegl \cite{CK2012}. Note that in \cite{BB2011}, the authors reformulated an equivalent system in terms of the {tensor} $\bbb$ instead of $\btau$, and made use of a physically relevant elastic energy $\tr(\bbb - \ln \bbb - \bbi)$ and its estimate, which was derived before by Hu--Leli\`evre \cite{HL2007}. One of the advantages of the formulation with respect to $\bbb$ is that one can expect the positive definiteness of it, while in the other formulation this is not clear at all.
Later, a compressible counterpart of the (diffusive) Oldroyd-B model was proposed and analyzed in \cite{BLS2017} by means of a combination of multilayer approximations from \cite{BB2011} and the compressible Navier--Stokes equation \cite{FN2017}, based on the Galerkin scheme. For other related viscoelastic models, one refers to \cite{BLL2022,LMNR2017} for Peterlin type models, and to \cite{BBM2021} for a mixing of the Oldroyd-B model and the Giesekus model. {A recent existence result for the three dimensional Giesekus model can be found in \cite{BLM2024}.}

Along with the development of diffuse interface models, two-phase viscoelasticity came up in recent years. Gr\"un--Metzger \cite{GM2016} derived a micro-macro model for two-phase flow of dilute polymeric solutions, where a Fokker–Planck type equation describes orientation and elongation of polymers. Sieber \cite{Sieber2020} then extended the model in \cite{GM2016} to the case with different mass densities for polymeric fluids. Moreover, a viscoelastic phase separation model of Peterlin type was derived in \cite{B+2021} and investigated in \cite{BL2022a,BL2022b}. By virtue of the Oldroyd-B model, recently Garcke--Kov\'acs--Trautwein \cite{GKT2022} established a viscoelastic Cahn--Hilliard model to describe tumor growth and obtained the existence of weak solutions in the case of matched mass densities with the help of finite element approximations. 

In contrast to the growing literature on two-phase flows and viscoelastic fluids, the analytical results about two-phase viscoelasticity are quite limited, especially the Oldroyd-B type models with unmatched densities and variable shear moduli. 
To the best of our knowledge, the only related results of diffuse interface model including viscoelasticity of Oldroyd-B type are \cite{GKT2022,Sieber2020} we mentioned above, where they showed the existence of weak solutions for 2D two-phase Oldroyd-B type fluids with 
polynomial potential, while none of them concerned the case of unmatched densities and phase-depending shear moduli, which in fact, is rather physical and interesting. The aim of the present paper is to provide a deep understanding on the theory of incompressible two-phase flow with viscoelasticity of Oldroyd-B type with more physical assumptions, in company with the AGG model \cite{ADG2013,AGG2012}. More precisely, we are going to prove the existence of weak solutions to the incompressible Oldroyd-B/Cahn--Hilliard model in the presence of variable densities, shear moduli, viscosities, and a singular potential.



\subsection{Main results}
\label{sec:main-result}
	In this section, we state the main result of this manuscript. Namely, the global existence of weak solutions to \eqref{eqs:Model} in two dimensions, which describes the incompressible viscoelastic two-phase flow with general (unmatched) mass density, variable viscosity, different shear moduli and a singular free energy density in two dimensions. We note that the notation will be explained in the beginning of Section \ref{sec:preliminaries}.
    In the following we summarize the assumptions that are necessary to formulate the notation of a weak solution.
{
\begin{assumption}
	\label{ass:weak}
	We assume that $ \Omega \subset \bbr^2 $ is a bounded domain with smooth boundary. Moreover, we impose the following conditions.
	\begin{enumerate}[label=\textbf{(H\arabic*)}] 
		\item \label{ass:rho} {The density of the model is given by $ \rho(\phi) = \onehalf(\rho_1 + \rho_2) + \onehalf(\rho_2 - \rho_1) \phi $}. 
  Here $ \rho_i > 0 $ denote the constant unmatched densities of two fluids and $ \phi $ is the difference of the volume fractions of the fluids.
		\item \label{ass:lower-upper-bounds} We assume $ m, \mu \in C^1(\bbr) $, $ \nu \in C(\bbr) $ and they have the corresponding constant lower and upper bounds, i.e., $ 0 < \underline{m} \leq m \leq \overline{m} $, $ 0 < \underline{\nu} \leq \nu \leq \overline{\nu} $, $ 0 < \underline{\mu} \leq \mu \leq \overline{\mu} $ and $ 0 < \underline{\mu}' \leq \mu' \leq \overline{\mu}' $.
		\item \label{ass:free-energy} The free energy density is assumed to be a general function $ W \in C([-1,1]) \cap C^2((-1,1)) $ that satisfies
		\begin{equation*}
			\lim_{s \rightarrow -1} W'(s) = - \infty, \quad
			\lim_{s \rightarrow 1} W'(s) = + \infty, \quad
			W''(s) \geq - \omega \text{ for some } \omega \in \bbr.
		\end{equation*}
	\end{enumerate}
\end{assumption}
Moreover, we record the definition of weak solutions to \eqref{eqs:Model}.}
{
\begin{definition}
\label{def:model}
Let $ T > 0 $ and $ (\bu_0, \bbb_0, \phi_0) \in L_\sigma^2(\Omega) \times L^2(\Omega; \bbr_{\mathrm{sym}}^{2 \times 2}) \times W^{1,2}(\Omega) $ with $ \bbb_0 $ positive definite, $\tr \ln \bbb_0 \in L^1(\Omega)$ and $ \abs{\phi_0} \leq 1 $ almost everywhere in $ \Omega $. In addition, let Assumption \ref{ass:weak} hold true.
We call the quadruple $ (\bu, \phi, q, \bbb) $ a \textit{finite energy} weak solution to \eqref{eqs:Model} with initial data $ (\bu_0, \bbb_0, \phi_0) $, provided that
\begin{enumerate}
\item the quadruple $ (\bu, \phi, q, \bbb) $ satisfies 
\begin{gather*}
\bu \in C_w([0,T]; L_\sigma^2(\Omega)) 
\cap L^2(0,T; W_0^{1,2}(\Omega; \bbr^2)); \\
\phi \in C_w([0,T]; W^{1,2}(\Omega)) 
\cap L^2(0,T; W^{2,2}(\Omega)) \text{ with } \phi\in(-1,1) \text{ a.e.~in } Q_T; \\
W'(\phi) \in L^2(0,T; L^2(\Omega)), \quad
q \in L^2(0,T; W^{1,2}(\Omega)); \\
\bbb \text{ is symmetric positive definite a.e.~in } Q_T; \\
\bbb \in C_w([0,T]; L^2(\Omega; \bbr_{\mathrm{sym}}^{2 \times 2})) 
\cap L^2(0,T; W^{1,2}(\Omega; \bbr_{\mathrm{sym}}^{2 \times 2})); \\
\tr \ln \bbb \in L^\infty(0,T; L^1(\Omega)) 
\cap L^2(0,T; W^{1,2}(\Omega)); 
\end{gather*}
\item for all $ t \in (0,T) $ and all $ \bw \in C^\infty([0,T]; C_0^\infty(\Omega; \bbr^2)) $ with $ \Div \bw = 0 $, we have
\begin{align}
\int_0^t \int_\Omega & \Big( \rho(\phi) \bu \cdot \pt \bw 
+ (\rho(\phi) \bu \otimes \bu) : \nabla \bw 
- \rho'(\phi) (\bu \otimes m(\phi) \nabla q) : \nabla \bw \Big) \dx \dtau \nonumber\\
& \quad - \int_0^t \int_\Omega \Big( 
\nu(\phi) (\nabla \bu + \tran{\nabla} \bu) : \nabla \bw 
+ \mu(\phi) (\bbb - \bbi) : \nabla \bw
\Big) \dx \dtau 
\label{eqs:weak-full-u-formulation} \\
& = - \int_0^t \int_\Omega q \nabla \phi \cdot \bw \dx \dtau
- \int_0^t \int_\Omega \frac{\mu(\phi)}{2} \nabla \tr(\bbb - \ln \bbb - \bbi) \cdot \bw \dx \dtau \nonumber \\
& \quad + \int_\Omega \rho(\phi(\cdot, t)) \bu(\cdot, t) \cdot \bw(\cdot, t) \dx
- \int_\Omega \rho(\phi_0) \bu_0 \cdot \bw(\cdot, 0) \dx; \nonumber
\end{align}
\item for all $ t \in (0,T) $ and all $ \xi \in C^\infty([0,T]; C^1(\overline{\Omega}; \bbr)) $, we have
\begin{align}
\int_0^t \int_\Omega & \phi \big(  \pt \xi 
+ \bu \cdot \nabla \xi \big) \dx \dtau
- \int_0^t \int_\Omega m(\phi) \nabla q \cdot \nabla \xi \dx \dtau 
\label{eqs:weak-full-phi-formulation} \\
& = \int_\Omega \phi(\cdot, t) \xi(\cdot, t) \dx
- \int_\Omega \phi_0 \xi(\cdot, 0) \dx; \nonumber
\end{align}
\item for a.e.~$ (x,t) \in Q_T $, we have
\begin{equation*}
q = W'(\phi) - \Delta \phi
+ \frac{\mu'(\phi)}{2} \tr(\bbb - \ln \bbb - \bbi);
\end{equation*}
\item for all $ t \in (0,T) $ and all $ \bbc \in C^\infty(\overline{Q_T}; \bbr_{\mathrm{sym}}^{2 \times 2}) $, we have
\begin{align}
\int_0^t \int_\Omega & \Big( \bbb : \pt \bbc 
+ (\bu \otimes \bbb) : \nabla \bbc \Big) \dx \dtau \nonumber\\
& \quad + \int_0^t \int_\Omega \Big(\big(\nabla \bu \bbb + \bbb \tran{\nabla} \bu \big) : \bbc 
- \kappa \nabla \bbb : \nabla \frac{\bbc}{\mu(\phi)} \Big) \dx \dtau 
\label{eqs:weak-full-B-formulation} \\
& = \int_0^t \int_\Omega (\bbb : \bbc - \tr \bbc) \dx \dtau
+ \int_\Omega \bbb(\cdot, t) : \bbc(\cdot, t) \dx
- \int_\Omega \bbb_0 : \bbc(\cdot, 0) \dx; \nonumber
\end{align}
\item for a.e.~$ t \in (0,T) $, the following energy estimate holds
\begin{align}
\cE(t)
& + \onehalf \int_0^t \norm{\sqrt{\nu(\phi)}(\nabla \bu + \tran{\nabla} \bu)(\tau)}_{L^2}^2 \dtau
\nonumber \\
& 
+ \int_0^t \bigg(\norm{\frac{\mu(\phi)}{2} \tr(\bbb
+ \inv{\bbb} - 2\bbi)(\tau)}_{L^1} 
+ \frac{\kappa}{2} \norm{\nabla \tr \ln \bbb}_{L^2}^2\bigg) \dtau 
\label{eqs:energy-dissipation-inequality} \\
& 
+ \int_0^t \norm{\sqrt{m(\phi)}\nabla q(\tau)}_{L^2}^2 \dtau \leq {\cE_0},
\nonumber
\end{align}
where { $ \cE(t) = \cE\big(\bu(t), \phi(t), \bbb(t)\big) $ is the energy defined in \eqref{eqs:energy} and $ \cE_0 \coloneqq \cE(\phi_0,\bu_0,\bbb_0) $}.
\end{enumerate}
\end{definition}
}
    
    {Now, we state the main theorem of this work.}
	\begin{theorem}[Proved in Section \ref{sec:proof-weak-limiting}]
		\label{thm:main}
		Let $d=2$ and let Assumption \ref{ass:weak} hold true. Assume that the initial data satisfy $ (\bu_0, \bbb_0, \phi_0) \in L_\sigma^2(\Omega;\bbr^2) \times L^2(\Omega; \bbr_{\mathrm{sym}}^{2 \times 2}) \times W^{1,2}(\Omega) $ with $ \bbb_0 $ positive definite a.e.~in $\Omega$, $\tr\ln\bbb_0 \in L^1(\Omega)$, and $ \abs{\phi_0} \leq 1 $ a.e.~in $\Omega$ and $ \fint_\Omega \phi_0 \dx \in (-1, 1) $. Then for any $ T \in (0, \infty) $, there exists a weak solution $ (\bu, \bbb, \phi, q) $ of \eqref{eqs:Model} in the sense of Definition \ref{def:model}. {Moreover, the tensor $ \bbb $ fulfills the following estimate}
		\begin{equation*}
		    \norm{\bbb(t)}_{L^2}^2
	        + \int_0^t \norm{\bbb(\tau)}_{L^2}^2 \dtau
	        + \int_0^t \norm{\nabla \bbb(\tau)}_{L^2}^2 \dtau
	        \leq C (\cE_0, \norm{\bbb_0}_{L^2}^2)
		\end{equation*}
		for a.e.~$ t \in (0,T) $.
	\end{theorem}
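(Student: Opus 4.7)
The plan is to construct weak solutions via a regularize-discretize-and-pass-to-the-limit strategy, in the spirit of \cite{BB2011,BLS2017,GKT2022} but adapted to the two-phase coupling with unmatched densities and phase-dependent shear modulus. First I would introduce two regularizations: (i) replace the singular potential $W$ by a smooth convex-plus-concave approximation $W_\delta$ with globally Lipschitz derivative, obtained by extending the convex part past $\pm 1$ by a quadratic function; (ii) replace $\ln \bbb$ by a smooth cut-off $\ell_\delta$ agreeing with $\ln$ on eigenvalues bounded below by $\delta$, so that the elastic entropy and the coupling term in \eqref{Eqs:CH-q} are well defined before positive definiteness of $\bbb$ is established. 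The modified-energy dissipation identity stated in the remark after \eqref{Eqs:Model-modified} is preserved by these regularizations.

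At the discrete level I would use a hybrid implicit time-stepping scheme. On each interval $(t_n,t_{n+1})$, given $(\bu^n,\bbb^n,\phi^n)$, first solve the linear parabolic equation for $\bbb^{n+1}$ with $\bu^n$ in the transport term and $\kappa/\mu(\phi^n)$ in the diffusion (existence by Lax-Milgram, positivity by testing with $\ell_\delta'(\bbb^{n+1})$ as in \cite{BB2011}). Then solve the coupled Navier-Stokes/Cahn-Hilliard block for $(\bu^{n+1},\phi^{n+1},q^{n+1})$ via a Galerkin projection and a Leray-Schauder fixed point, with the density $\rho(\phi^{n+1})$ and the source $\tfrac{1}{2}\mu'(\phi)\tr(\bbb-\ell_\delta(\bbb)-\bbi)$ taken at time $t_{n+1}$. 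Summation in time reproduces a discrete analogue of the modified energy identity, giving uniform bounds for $\bu$ in $L^\infty(L^2)\cap L^2(H^1)$, for $\phi$ in $L^\infty(H^1)$ with $W_\delta(\phi)\in L^\infty(L^1)$, for $\nabla q$ in $L^2(L^2)$, for $\tr(\bbb+\inv{\bbb}-2\bbi)$ in $L^\infty(L^1)$, and for $\nabla\tr\ell_\delta(\bbb)$ in $L^2(L^2)$.

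To upgrade these to the $L^\infty(L^2)\cap L^2(H^1)$ estimate on $\bbb$ stated in the theorem, I would test \eqref{Eqs:modified-fluid-B} with $\bbb$ itself, integrate by parts in the diffusive term, and absorb the upper-convected contribution by the two-dimensional Ladyzhenskaya inequality $\norm{\bbb}_{L^4}^2\leq C\norm{\bbb}_{L^2}\norm{\bbb}_{H^1}$ combined with Young and Gronwall; this is where the restriction $d=2$ is crucial. Also specific to two dimensions, the Moser-Trudinger embedding applied to $\phi\in L^\infty(H^1)$ combined with the chemical-potential equation yields $W_\delta'(\phi)\in L^\infty(L^p)$ for every finite $p$ independently of $\delta$, which in the limit forces $|\phi|<1$ a.e.~and identifies $q$.

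The main obstacle will be the passage to the limit in the nonlinear Oldroyd-B block. The upper-convected terms $\nabla\bu\,\bbb+\bbb\,\tran{\nabla}\bu$ require strong convergence of $\bbb$ in $L^2(L^2)$, which follows from Aubin-Lions once the $H^1$-spatial regularity from stress diffusion is combined with a time-regularity bound on $\pt\bbb$ in a suitable negative Sobolev space, read off from the equation and the available 2D integrability of its right-hand side. Positive definiteness of the limit $\bbb$, required to interpret $\ln\bbb$ and to remove the $\ell_\delta$-cut-off, is recovered by lower semicontinuity applied to $\tr\inv{\bbb}$ together with the uniform $L^\infty(L^1)$-entropy bound. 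The coupling through $\mu(\phi)$ in both the diffusion coefficient of \eqref{Eqs:modified-fluid-B} and the non-Lipschitz source in \eqref{Eqs:CH-q} is handled by combining strong convergence of $\phi$ in $L^p(L^p)$ for every finite $p$ with the uniform $L^1$-entropy and $L^2$-norm bounds on $\bbb$. Finally, the extra gradient term $\tfrac{1}{2}\mu(\phi)\nabla\tr(\bbb-\ln\bbb-\bbi)$ on the right of \eqref{Eqs:modified-fluid-momentum} is rewritten via $\nabla\tr\ln\bbb=\tr(\inv{\bbb}\nabla\bbb)$ and controlled by the $L^2(L^2)$-bound on $\nabla\tr\ln\bbb$ supplied by the modified dissipation.
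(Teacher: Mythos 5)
Your proposal follows the same broad regularize--discretize--pass-to-the-limit architecture as the paper, and several ingredients match (entropy cut-off for $\ln\bbb$, Ladyzhenskaya in 2D for the stronger $L^\infty(L^2)\cap L^2(H^1)$ bound on $\bbb$, Leray--Schauder, Helmholtz projection for compactness of $\bu$). However, there is a genuine gap: you never introduce a mollification of the velocity and of the shear modulus, which is the central technical device of the paper (the operator $\sR_\eta$ in \eqref{Eqs:Model_reg}). Its purpose is precisely to make the energy estimate well-defined. When you test the $\bbb$-equation with $\tfrac{\mu(\phi)}{2}(\bbi - \inv{\bbb})$ to extract the entropy, the transport term, after integration by parts, produces the cross term $\int_\Omega \tfrac{\mu'(\phi)}{2}\,\bu\cdot\nabla\phi\,\tr(\bbb - \ln\bbb - \bbi)\dx$. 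With only $\bu\in L^\infty L^2\cap L^2 H^1$, $\phi\in L^\infty H^1$ and $\tr(\bbb-\ln\bbb-\bbi)\in L^\infty L^1$, this integral is not obviously finite, so the energy identity you rely on at every level is not available without smoothing $\bu$ and $\mu(\phi)$. Your scheme inherits exactly this problem, and merely invoking the ``modified-energy dissipation identity stated in the remark'' does not resolve it because that identity is only formal.

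Two further points. First, the paper's time discretization is genuinely hybrid: $\widetilde\bbb$ is solved \emph{continuously in time} on each $I_{k+1}$ with the \emph{new} discrete velocity $\bu_{k+1}$ in the transport (not the old $\bu^n$ as you propose), and the Cahn--Hilliard/Navier--Stokes block couples to time-averages $\tfrac1h\int_{I_{k+1}}\widetilde\bbb\,\dt$. This is what makes the mixed terms on the right of \eqref{Eqs:AGG-energy-discrete-B} cancel exactly against \eqref{Eqs:B-identity-discrete}; with the lagged velocity $\bu^n$ the cancellation fails, and the resulting error terms are not controlled by the dissipation. It also forces dedicated compactness lemmas for time-averaged sequences (Lemmas \ref{lem:uniform-bound-time-averaged}--\ref{lem:compactness_translation}) because Aubin--Lions does not apply to $\widetilde\bbb^N$, which has jumps across intervals. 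Second, your treatment of the singular potential via a smooth $W_\delta$ plus Moser--Trudinger is not how the paper handles it and appears problematic: $W'$ is singular at $\pm 1$, not at $\pm\infty$, so Moser--Trudinger integrability of $\phi$ does not translate into uniform-in-$\delta$ control of $W_\delta'(\phi)$. The paper instead works directly with the maximal monotone subdifferential $\partial\widetilde E$ and constrains $\phi\in\cD(\partial\widetilde E)$ (so $|\phi|\leq 1$ by construction), using Lemma \ref{lem:convex-potential-L1-q} to control $\normm{\widetilde W_0'(\phi)}_{L^1}$ and the mean of $q$.
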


\begin{remark}[Modified pressure]
    {The unknown function $\pi$ from \eqref{eqs:Model} is a modified pressure and it is related to the `standard' pressure $p$ (cf.~Appendix \ref{sec:derivation}) via the relation
    \begin{align*}
        \pi = p + \frac{\tilde\sigma\epsilon}{2} \abs{\nabla \phi}^2 + \frac{\tilde\sigma}{\epsilon} W(\phi) + \frac{\mu(\phi)}{2} \tr(\bbb - \ln \bbb - \bbi).
    \end{align*}
    This formulation is more beneficial for the analysis of weak solutions.}
        {In fact, from the derivation of \eqref{eqs:Model}, cf.~Appendix \ref{sec:derivation}, we know that the original momentum equation is 
        \begin{align*}
            \pt (\rho(\phi) \bu) & + \Div ( \rho(\phi) \bu \otimes \bu ) + \Div ( \bu \otimes \bJ ) + \nabla p \\
			& - \Div \big(\bbs(\nabla \bu, \bbb, \phi)\big) = - \epsilon \tilde{\sigma} \Div \big( \nabla \phi \otimes \nabla \phi \big),
        \end{align*}
        where the function $p:Q_T \to \bbr$ is the pressure, $\epsilon \tilde{\sigma} \Div \big( \nabla \phi \otimes \nabla \phi \big)$ is referred to as the capillary force.
        Then by defining a new scalar pressure
        \begin{equation*}
            \pi \coloneqq p + \frac{\tilde\sigma\epsilon}{2} \abs{\nabla \phi}^2 + \frac{\tilde\sigma}{\epsilon} W(\phi) + \frac{\mu(\phi)}{2} \tr(\bbb - \ln \bbb - \bbi),
        \end{equation*}
        we obtain
        \begin{equation*}
		  \nabla \pi = \nabla p + q \nabla \phi + \tilde\sigma \epsilon \Div \big( \nabla \phi \otimes \nabla \phi \big)
		  + \frac{\mu(\phi)}{2} \nabla \tr(\bbb - \ln \bbb - \bbi).
    	\end{equation*}
     In \cite{BLS2017}, a regularization term $ \alpha \nabla \tr \ln \bbb $ was added in the momentum equation but here we get it for free, since the shear modulus $ \mu(\phi) $ is unmatched among the two phases.
     }
    \end{remark}
    \begin{remark}[Stress diffusion]
        {For {mathematical} reasons, we introduced a diffusive regularization $\frac{\kappa}{\mu(\phi)}\Delta \bbb$ in \eqref{eqs:fluid-B} such that the evolution law of the {tensor} $\bbb$ is of parabolic nature. The system is closed with an additional no-flux boundary condition~${\ptial{\bn} \bbb={\mathbb{O}}}$, see also in \cite{ACGR2022, BB2011, BBM2021, CK2012, GKT2022}. }
        
        {In comparison to, e.g., \cite{BB2011}, the special structure of the factor $ \frac{\kappa}{\mu(\phi)} $ in \eqref{eqs:fluid-B} is of the main importance in the presence of a phase-depending shear modulus~$\mu(\phi)$ to keep the energy dissipation structure in \eqref{eqs:energy-dissipation-structure}.
        In general, the additional diffusive term $ \frac{\kappa}{\mu(\phi)} \Delta \bbb $ can also be motivated by means of a nonlocal energy storage or nonlocal entropy production mechanisms, see M\'{a}lek--Pr\r{u}\v{s}a--Sk\v{r}ivan--S\"uli \cite{MPSS2018}.
        }
    \end{remark} 
\begin{remark}
    In this manuscript, we consider an isotropic free energy and a regular mobility. These are simplifications as our main focus lies on the unmatched densities, viscosities and shear moduli. We comment that more general cases can be obtained by suitable modifications in our current framework. See also Remark \ref{rem:generalcases} for further discussion.
\end{remark}
\begin{remark}[Extension to 3D]
{
{The existence of weak solutions is only valid for the two-dimensional case, primarily due to the lack of compactness of the tensor $\bbb$ in three dimensions. 
Recently, for a simpler model of the incompressible Navier--Stokes equations coupled with the diffusive Oldroyd-B equation, authors of \cite{BBM2021} proved the 3D result by means of a modified Helmholtz free energy, which includes an additional quadratic term, e.g., $\int_\Omega \big( \frac12 \mu \tr(\bbb - \ln\bbb - \bbi) + \frac14 \mu \abs{\bbb - \bbi}^2 \big)\, \mathrm{d}x$. Incorporating this quadratic term improves the compactness properties, as energy estimates directly provide an $L^2$ estimate for $\bbb$ in two and three dimensions. 
We also refer to the recent work \cite{GT2024} that presents an existence result for a related diffuse interface model in three dimensions that includes quadratic contributions to the elastic energy.
This suggests that future work could extend the arguments presented here combined with the strategies from \cite{BBM2021} to establish the existence of weak solutions in a three-dimensional setting.}
}
\end{remark}
%
%
	
\subsection{Strategy of the Proof (Technical Discussions)}
\label{sec:technical-discussions}
Note that the system \eqref{eqs:Model} has a similar structure as the AGG model \cite{AGG2012}, which was solved with the help of an implicit time discretization scheme in \cite{ADG2013}. 
In order to combine this with the existence of weak solutions to an Oldroyd-B fluid system proved in \cite{BB2011}, {we introduce a regularization} to \eqref{eqs:Model} (see $\sR_\eta$ below, also in \eqref{eqs:Model_reg}), such that we can solve the regularized system in a suitable way and have good uniform \textit{a priori} estimates (see Section \ref{sec:a-priori-reg}), inspired by \cite{Abels2009ARMA,BS2018,LR2014}. More specifically, the advantage of the regularization is twofold. Due to the presence of a phase-dependent shear modulus, the regularity of the terms with the coefficient $\mu(\phi)$ will cause problems if one tries the standard testing procedure. For example, multiplying the second term in \eqref{eqs:fluid-B} with $\frac{\mu(\phi)}{2} (\bbi - \inv{\bbb})$ and integrating by parts over $\Omega$ leads to
\begin{align*}
    \int_\Omega \bu \cdot \nabla \bbb : \frac{\mu(\phi)}{2} (\bbi - \inv{\bbb}) \dx
    =  \int_\Omega \bu \cdot \nabla \left(\frac{\mu(\phi)}{2} \tr (\bbb - \ln \bbb - \bbi)\right) \dx \\
    - \int_\Omega \frac{\mu'(\phi)}{2} \bu \cdot \nabla \phi \tr(\bbb - \ln \bbb - \bbi) \dx
\end{align*}
where $ \bu \in L^\infty(0,T; L_\sigma^2(\Omega)) \cap L^2(0,T; W_0^{1,2}(\Omega; \bbr^d)) $. The first term on the right-hand side vanishes for such $\bu$ (as it is solenoidal), while it is not clear if the second integral is well-defined or not under the current regularity setting. 
With the regularized terms $\mu(\sR_\eta \phi)$ and $\sR_\eta \bu$ in the regularized system, it is no longer an issue since $\sR_\eta \phi$ and $\sR_\eta \bu$ are sufficiently smooth for any $\eta > 0$.
{Moreover, under such regularization, we separate the proof into two parts, focusing on the challenges of the individual subsystems rather than addressing the combined difficulties all at once. Note that this kind of regularization at the same time promotes the solvability of the regularized Oldroyd-B equation, see \eqref{eqs:B-equation} below. }

Concerning the solvability of the regularized Oldroyd-B equation in three dimensions, we make use of the standard Galerkin approximation to account for the parabolic nature. In addition, inspired by \cite{BB2011}, an entropy regularization for $\ln \bbb$ is introduced, which is of importance since the {tensor} $\bbb$ is not necessarily positive definite almost everywhere in $Q_T$ and hence the physical energy $\tr(\bbb - \ln \bbb - \bbi)$ is not necessarily well-defined in presence of the logarithmic term. Thanks to the regularization $\sR_\eta$, we obtain uniform estimates and then pass to the limit for both the entropy regularization and the Galerkin approximation. Finally, we conclude the positive definiteness of the limit function (i.e.~the {tensor} $\bbb$) with a contradiction argument.

To solve the regularized system in 3D, one natural idea would be to employ a time discretization scheme for the full system as in \cite{ADG2013}. However, it is not reasonable to repeat all the regularization and approximation techniques for practicality and readability. Thus, in this manuscript, we propose a novel scheme, we called ``\textit{hybrid time discretization}'' (see Section \ref{sec:time-discretization}), to obtain a suitable approximation of the full system, combined with the solvability of the Oldroyd-B system. Specifically, for the AGG part $(\bu, \phi)$ we keep the implicit time discretization as in \cite{ADG2013}, namely the piecewise constant discrete solution with time-averaged terms regarding $\bbb$, while the Oldroyd-B part is solved in each time interval continuously in time with the time-averaged solution from the previous time step as the initial datum. 
Due to the well-posedness of the Oldroyd-B part, one is able to construct a continuous mapping of $\bbb$ in terms of $(\bu, \phi)$ in the discrete AGG system, which is solvable in a similar fashion as in \cite{ADG2013}. The first essential ingredient of the hybrid time discretization scheme is a uniform estimate, for which we {exploit} the energy estimate of $\bbb$ on a discrete time interval and cancel out all the mixing terms. This is where we use the time-averaged approximation of terms containing $\bbb$, see \eqref{eqs:AGG-energy-discrete-B} and \eqref{eqs:B-identity-discrete}. The second crucial element is the compactness of these time-averaged terms, as the limit passage in these terms requires additional arguments. 
For this, we provide compactness results for weakly-$*$ convergent and weakly convergent sequences, respectively, with the help of a convolution (in time) with Dirac sequences and the Lebesgue differentiation theorem, see Appendix \ref{sec:compactness-B}. Note that one can not apply the Aubin--Lions lemma for the approximate {tensor}, say $\widetilde\bbb^N$, since it is endowed with jumps across the time intervals and one can not expect the existence of $\pt \widetilde\bbb$ over the whole time interval. This is overcome by a compactness argument with time translations, cf. Section \ref{sec:proof-reg}.

The final step is the passage to the limit in the regularized system as the regularization parameter $\eta \to 0$. This can be realized {in two dimensions} with a compactness argument from the concise uniform \textit{a priori} estimate \eqref{eqs:formalEstimate_reg} and a stronger estimate \eqref{eqs:formalEstimate_reg_strong} for $\bbb$, which are both independent of $\eta > 0$. The stronger estimate is necessary since the energy estimate \eqref{eqs:formalEstimate_reg} only provides merely $L^1$ information of $\bbb$. Note that here the stronger estimate for $\bbb$ relies on the Gagliardo--Nirenberg inequality which requires the restriction to two dimensions. Then, in light of the compactness of $\bu$, which is derived with a Helmholtz projection and the convergence of the kinetic energy, one obtains the existence of a weak solution to the original full system \eqref{eqs:Model} by passing to the limit as $\eta \to 0$.

\subsection{Outline}
The rest of the paper unfolds as follows.
In Section \ref{sec:preliminaries}, we explain the notations and provide some auxiliary lemmata. 
In Section \ref{sec:B} we study the tensor-valued equation for the {tensor $\bbb$} with $\eta$-regularization in three dimensions. The well-posedness is of particular significance and is obtained by a Galerkin approximation and an energy regularization. 
Section \ref{sec:regularized} is devoted to the analysis of the regularized system, which is approximated by a hybrid time discretization scheme. Here the discrete solution is solved by a fixed-point argument.
In Section \ref{sec:proof-of-weak-solution} we finish the proof of Theorem \ref{thm:main} (the existence of weak solutions to \eqref{eqs:Model}) by passing to the limit in the regularization, i.e., $\eta \to 0$, together with 
a uniform estimate derived in Section \ref{sec:a-priori-reg} and a stronger estimate for $\bbb$ in Section \ref{sec:a-priori-reg-strong}.
Additionally, in Appendix \ref{sec:derivation} we provide a thermodynamically consistant derivation of the diffuse interface model \eqref{eqs:Model} via local dissipation laws. In particular, we include the derivation of the equation for the {tensor $\bbb$}.

\section{Preliminaries}
\label{sec:preliminaries}
\subsection{Conventions and Auxiliary Results}
First, the set of real numbers natural numbers (excluding zero) are denoted by $\bbr$ and $\bbn$, and we write $\bbr_+ = \{x > 0\}$ and $\bbn_0 = \bbn \cup \{0\}$. 


\subsubsection{Vector and tensor analysis}
Let $\ptial{j} = \ptial{x_j}$ be the spatial derivative in $x_j$ direction, $j = 1,\dots,d$. 
For a vector $\bu = (\bu_j)_{j = 1}^d \in \bbr^d$, the divergence and gradient are given by $\Div \bu = \sum_{j = 1}^d \ptial{j} \bu_j$, $\nabla \bu = (\ptial{j} \bu^i)_{i,j=1}^d$. Additionally, the transpose of the gradient will be denoted by $\nabla^\top \bu \coloneqq (\nabla \bu)^\top$. Given vectors $\bu = (\bu_j)_{j = 1}^d, \bv = (\bv_j)_{j = 1}^d \in \bbr^d$, $d\in\bbn$, we define
\begin{equation*}
    \bu \cdot \bv = \sum_{i = 1}^d \bu_i \bv_i, \quad 
    (\bu \otimes \bv)_{ij} = \bu_i \bv_j.
\end{equation*}
Given matrices $\bba = (\bba_{ij})_{i,j = 1}^d, \bbb = (\bbb_{ij})_{i,j = 1}^d \in \bbr^{d \times d}$, we know $(\bba \bbb)_{ik} = \sum_{j = 1}^d \bba_{ij} \bbb_{jk}$. Then we denote by
\begin{equation*}
    \bba : \bbb = \tr(\tran{\bbb} \bba) = \sum_{j,k=1}^d \bbb_{jk} \bba_{jk}
\end{equation*}
the Frobenius inner product and $\abs{\bba} \coloneqq \sqrt{\bba : \bba}$ the induced modulus. 
For a matrix $\bba = (\bba_{ij})_{i,j = 1}^d \in \bbr^{d \times d}$, we introduce the divergence as $(\Div \bba)_i = \sum_{j = 1}^d \ptial{j} \bba_{ij}$, and the gradient as $\nabla \bba = (\ptial{j} \bba)_{j = 1}^d$. Then, we define
\begin{gather*}
    \nabla \bu : \nabla \bv = \sum_{i = 1}^d \ptial{i} \bu \cdot \ptial{i} \bv = \sum_{i,j = 1}^d \ptial{i} \bu_j \ptial{i} \bv_j, \ 
    \nabla \bba : \nabla \bbb = \sum_{i = 1}^d \ptial{i} \bba : \ptial{i} \bbb = \sum_{i,j,k = 1}^d \ptial{i} \bba_{jk} \ptial{i} \bbb_{jk},\\
    (\bu \otimes \bba)_{ijk} = \bu_i \bba_{jk}, \ 
    (\bu \otimes \bba) : \nabla \bbb = \sum_{i,j,k = 1}^d \bu_i \bba_{jk} \ptial{i} \bbb_{jk},
\end{gather*}
Moreover, let $\bbr_{\mathrm{sym}}^{d \times d}$ be the set of symmetric matrices in $\bbr^{d \times d}$. For any $\bbb \in \bbr_{\mathrm{sym}}^{d \times d}$, there exists a diagonal decomposition
\begin{equation}
    \label{eqs:diagonal}
    \bbb = \tran{\bb{O}} \bbd \bb{O}, 
\end{equation}
{with $ \bbd = \mathrm{diag} \{\lambda_1, \lambda_2, \dots, \lambda_d\} $, where $\lambda_k > 0$, $k = 1,\dots,d$, are the eigenvalues of $\bbb$, and $\bb{O} \in \bbr^{d \times d}$ is an orthogonal matrix.}
{Let $g:\bbr \to \bbr$ be a scalar function. 
{We define
\begin{equation}
    \label{eqs:gB}
    g(\bbb) = \bb{O}^\top g(\bbd) \bb{O}, 
\end{equation}
where $g(\bbd) \coloneqq \mathrm{diag} \{g(\lambda_1), g(\lambda_2), \dots, g(\lambda_d)\}$.}
Although the diagonal decomposition \eqref{eqs:diagonal} is not unique, the function $g(\bbb)$ is uniquely defined by \eqref{eqs:gB}. 
}
{For any positive definite matrix $ \bbb \in \bbr_{\mathrm{sym}}^{d \times d} $, we define its real logarithm $\ln \bbb$ as a symmetric matrix such that $ e^{\ln \bbb} = \bbb$, {where $e^{\mathbb M}$ denotes the standard matrix exponential of a matrix ${\mathbb M} \in \mathbb{R}^{d \times d}$}. In fact, in view of \eqref{eqs:diagonal} and \eqref{eqs:gB}, we have
\begin{equation*}
    \ln \bbb = \tran{\bb{O}} \mathrm{diag} \{\ln \lambda_1, \ln \lambda_2, \dots, \ln \lambda_d\} \bb{O},
\end{equation*}
which also implies $\tr \ln \bbb = \ln \det \bbb$.}

We recall a set of properties for matrix-valued functions.
\begin{lemma}[Properties of matrix-valued functions]
	Let {$ \bbb : U \to \bbr_{\mathrm{sym}}^{d \times d} $ be positive definite a.e.~in $U$}, with some open $U\subset\bbr^n$, $n\leq d$, $d,n \in \bbn$, be differentiable with respect to a first-order differential operator $ \partial \cdot $. Then
	\begin{gather}
        \label{eqs:energy-positive}
        {\tr(\bbb - \ln \bbb - \bbi) \geq 0, }
        \\
        \label{eqs:dissipation-positive}
        {\tr(\bbb + \inv{\bbb} - 2 \bbi) \geq 0,}
        \\
        \label{eqs:partial-B-ln}
		\partial \bbb : \inv{\bbb} = \tr\big(\inv{\bbb} \partial \bbb\big) = \partial \tr \ln \bbb, \\
		\label{eqs:partial-B-entropy}
		\partial \bbb : (\bbi - \inv{\bbb})= \partial \tr(\bbb - \ln \bbb - \bbi).
	\end{gather}
    Moreover, {let $ \bbb , \bbc \in \bbr_{\mathrm{sym}}^{d \times d} $}, $ \bv \in C^1(U;\bbr^{d}) $,
    then it holds
	\begin{equation}
		\label{eqs:symmetric-product}
		\big(\nabla \bv \bbb + \bbb \tran{\nabla} \bv \big) : \bbc = 2 \big(\bbc \bbb\big): \nabla \bv.
	\end{equation}
\end{lemma}
\begin{proof}
{
The proof of \eqref{eqs:energy-positive} follows directly from \eqref{eqs:diagonal}, \eqref{eqs:gB} and the scalar inequality $s - \ln(s) - 1 \geq 0$ for $s>0$, i.e.,
\begin{align*}
    \tr(\bbb - \ln \bbb - \bbi) 
    = \sum_{i=1}^d ( \lambda_i - \ln(\lambda_i) - 1)
    \geq 0,
\end{align*}
where $\lambda_1, \ldots, \lambda_d>0$ are the eigenvalues of $\bbb$.
Similarly, the desired results \eqref{eqs:dissipation-positive}--\eqref{eqs:partial-B-entropy} are based on \eqref{eqs:diagonal}, \eqref{eqs:gB} and the scalar results $s + s^{-1} - 2 \geq 0$, $(\partial s) s^{-1} = \partial \ln(s)$ and $(\partial s) (1 - s^{-1}) = \partial (s - \ln(s) - 1)$ for all positive scalar functions $s$, {where $\partial$ is a first order differential operator.} 
The identity \eqref{eqs:symmetric-product} follows directly from the standard properties of the Frobenius inner product and the symmetry of $\bbb$ and $\bbc$.}
\end{proof}

\subsubsection{Function spaces}
Let $\Omega \subset \bbr^d$ with $d \in \{2,3\}$ be an open set with Lipschitz boundary $\partial \Omega$. The Lebesgue and Sobolev spaces for functions defined on $\Omega$ with values in $\bbr$ are denoted by $L^p(\Omega)$, $W^{k,p}(\Omega)$ for $1 \leq p \leq \infty$ and $k \in \bbn_0$.
In particular, $W^{0,p} (\Omega) = L^p(\Omega)$. {Moreover, $ C_0^\infty(\Omega) $ is defined as the space of all infinitely differentiable functions with compact support over $ \Omega $, and $W_0^{k,p}(\Omega) = \overline{C_0^\infty(\Omega)}^{W^{k,p}(\Omega)}$ is defined as the closure of $C_0^\infty(\Omega) \cap W^{k,p}(\Omega)$ with respect to the $W^{k,p}$ norm.} 
Analogously, we denote the Lebesgue and Sobolev spaces for vector/tensor-valued functions with values in $X$ for some $1 < n \in \bbn$ by $L^p(\Omega; X)$, $W^{k,p}(\Omega; X)$, where $X \in \{\bbr^n, \bbr^{n \times n}\}$. For further applications, we give the dual space as $W^{-k,p}(\Omega) \coloneqq [W_0^{k,p}(\Omega)]'$, solenoidal spaces as
$L_\sigma^p(\Omega) \coloneqq \{\bu \in L^p(\Omega; \bbr^d): \Div \bu = 0, \bn \cdot \bu|_{\partial \Omega} = 0\}$ and $W_{0,\sigma}^{k,p}(\Omega) \coloneqq W_0^{k,p}(\Omega) \cap L_\sigma^p(\Omega)$.
Let $I \subset \bbr$ be an interval and $X$ be a Banach space. The space $C(\bar{I}; X)$ denotes the set of continuous functions from $I$ to $X$. Moreover, $C_w(\bar{I}; X)$ is the space of functions that are continuous on $I$ with respect to the weak topology of $X$. The corresponding Banach space-valued Lebesgue and Sobolev spaces are $L^p(I; X)$, $W^{k,p}(I; X)$ for $1 \leq p \leq \infty$ and $k \in \bbn_0$. Throughout the paper, we sometimes use the notation $W_t^{k,p} W_x^{m,q} := W^{k,p}(0,t; W^{m,q}(\Omega))$ with $ k,m \in \bbn_0, 1 \leq p,q \leq \infty$. 

    \begin{lemma}
        \label{lem:energy-dissipation}
		Let $ E:[0,T) \rightarrow [0, \infty) $, $ 0 < T \leq \infty $, be a lower semicontinuous function and let $ D: (0,T) \rightarrow [0, \infty) $ be an integrable function. Then
		\begin{equation*}
			E(0) \varsigma(0) + \int_0^T E(t) \varsigma'(t) \dt \geq \int_0^T D(t) \varsigma(t) \dt
		\end{equation*}
		holds for all $ \varsigma \in W^{1,1}(0,T) $ with $ \varsigma(T) = 0 $ and $ \varsigma \geq 0 $ if and only if
		\begin{equation*}
			E(t) + \int_s^t D(\tau) \dtau \leq E(s)
		\end{equation*}
		holds for all $ s \leq t < T $ and almost all $ 0 \leq s < T $ including $ s = 0 $.
	\end{lemma}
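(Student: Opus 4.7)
The plan is to introduce the auxiliary function
\begin{equation*}
G(t) \coloneqq E(t) + \int_0^t D(\tau) \dtau, \qquad t \in [0, T),
\end{equation*}
so that the pointwise inequality asserts exactly that $G(t) \leq G(s)$ for a.e.\ $s$ (including $s = 0$) and every $t \geq s$, i.e., $G$ is essentially nonincreasing with $s = 0$ being a ``good'' base point. A brief application of Fubini's theorem to $\int_0^T D(t) \varsigma(t) \dt = -\int_0^T \varsigma'(s) \int_0^s D(\tau) \dtau \, \d s$, combined with the identity $\varsigma(0) = -\int_0^T \varsigma'(t) \dt$ (which holds since $\varsigma(T) = 0$), rewrites the integral inequality in the equivalent form
\begin{equation*}
E(0) \varsigma(0) + \int_0^T G(t) \varsigma'(t) \dt \geq 0,
\end{equation*}
reducing the whole equivalence to a question about essentially nonincreasing functions tested against sign-changing derivatives $\varsigma'$.

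For the direction $(\Leftarrow)$, the essential monotonicity of $G$ implies that $G$ agrees almost everywhere with a right-continuous, nonincreasing representative $\tilde{G}$ of locally bounded variation on $[0, T)$, whose associated Stieltjes measure satisfies $\d\tilde{G} \leq 0$. Lebesgue--Stieltjes integration by parts together with $\varsigma(T) = 0$ yields
\begin{equation*}
\int_0^T G \varsigma' \dt = \int_0^T \tilde{G} \varsigma' \dt = -\tilde{G}(0) \varsigma(0) - \int_{[0, T)} \varsigma \, \d\tilde{G},
\end{equation*}
so the quantity to be shown nonnegative equals $\bigl(E(0) - \tilde{G}(0)\bigr) \varsigma(0) - \int_{[0, T)} \varsigma \, \d\tilde{G}$. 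Both summands have the correct sign: $\varsigma \geq 0$ and $\d\tilde{G} \leq 0$ handle the integral term; and $E(0) = G(0) \geq \tilde{G}(0)$ follows directly from the hypothesis with $s = 0$, which forces $G(0) \geq G(r)$ for every $r$ in the full-measure set on which $G$ is monotone, and hence for the right limit defining $\tilde{G}(0)$.

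For the reverse implication $(\Rightarrow)$, I would test against piecewise linear approximations $\varsigma_\delta \in W^{1,1}(0, T)$ to $\chi_{[s, t]}$ (respectively $\chi_{[0, t]}$ with $\varsigma_\delta(0) = 1$ when $s = 0$), indexed by $\delta > 0$ and ramping linearly on intervals of width $\delta$ about the endpoints. In the limit $\delta \to 0^+$: dominated convergence yields $\int_0^T D \varsigma_\delta \dt \to \int_s^t D \dtau$; the Lebesgue differentiation theorem gives $\tfrac{1}{\delta} \int_{s-\delta}^{s} E(r) \, \d r \to E(s)$ at every Lebesgue point $s$ of $E$; and, crucially, the lower semicontinuity of $E$ supplies the one-sided bound $\liminf_{\delta \to 0^+} \tfrac{1}{\delta} \int_t^{t+\delta} E(r) \, \d r \geq E(t)$ pointwise in $t$, in the direction that preserves the inequality. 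Combining these in the integral estimate for $\varsigma_\delta$ produces $E(t) + \int_s^t D \dtau \leq E(s)$ for a.e.\ $s$ and every $t \geq s$; the case $s = 0$ is even easier, since the boundary quantity $E(0) \varsigma_\delta(0) = E(0)$ is exact and no Lebesgue point hypothesis is needed.

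The main obstacle is the careful handling of the right endpoint in the $(\Rightarrow)$ direction, where the mean value of $E$ need not converge but the $\liminf$ supplied by lower semicontinuity is precisely what delivers the inequality for every $t$ (rather than merely almost every). Symmetrically, the inclusion of $s = 0$ among the admissible base points in the pointwise inequality is exactly what secures the sign of the boundary contribution $E(0) - \tilde{G}(0) \geq 0$ in the $(\Leftarrow)$ direction; apart from these two points, the argument consists of routine Fubini and Lebesgue--Stieltjes manipulations.
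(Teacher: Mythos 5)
Your proof is correct, and it is worth noting that the paper itself gives no argument for this lemma: it only cites \cite[Lemma 4.3]{Abels2009CMP}, so your proposal is a self-contained alternative rather than a reproduction of the paper's proof. The reduction via $G(t) = E(t) + \int_0^t D$ and the Fubini rewrite $E(0)\varsigma(0) + \int_0^T G\varsigma'\,\d t \geq 0$ is clean and correct, and it cleanly isolates the two mechanisms: for ``$\Leftarrow$'' the essential monotonicity of $G$ together with the distinguished base point $s=0$ delivers the sign of both the boundary term $(E(0)-\tilde G(0))\varsigma(0)$ and the Stieltjes term $-\int \varsigma\,\d\tilde G$; for ``$\Rightarrow$'' the tent-function test combined with Lebesgue points at the left endpoint and lower semicontinuity at the right endpoint gives the pointwise inequality for \emph{every} $t$ (not merely a.e.~$t$), which is exactly what the lemma asserts. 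Two small points you may want to make explicit when writing this up. First, for the Fubini step and the statement itself to make sense one needs $E\varsigma' \in L^1$, which is not automatic from lower semicontinuity alone; in the ``$\Leftarrow$'' direction it follows from $E(t) \leq E(0)$ (the pointwise inequality at $s=0$), and in the ``$\Rightarrow$'' direction one should note that the hypothesis already presupposes local boundedness of $E$ (as is indeed the case in all applications in the paper, where $E$ is the total energy). Second, in the Lebesgue--Stieltjes integration by parts the boundary term at $T$ vanishes either because $\varsigma(T)=0$ when $T<\infty$, or because $\varsigma\in W^{1,1}(0,\infty)$ forces $\varsigma(t)\to 0$ and $\tilde G$ is bounded monotone when $T=\infty$; this is implicit in your sketch but deserves a line given that the lemma allows $T=\infty$.
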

	\begin{proof}
		For the proof, one is referred to Abels \cite[Lemma 4.3]{Abels2009CMP}.
	\end{proof}

	\begin{lemma}
		\label{lem:C_w}
		Let $ X,Y $ be two Banach spaces such that $ Y \hookrightarrow X $ and $ X' \rightarrow Y' $ densely and let $ 0 < T < \infty $. Then 
		\begin{equation*}
			L^\infty(0,T; Y) \cap C([0,T]; X)
			\hookrightarrow C_w([0,T]; Y).
		\end{equation*}
	\end{lemma}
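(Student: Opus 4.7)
The plan is to derive the two defining properties of the target space from the hypotheses: for every $t \in [0,T]$, the value $u(t)$ lies in $Y$ with a uniform bound, and for every $\phi \in Y'$ the scalar map $t \mapsto \langle \phi, u(t)\rangle_{Y',Y}$ is continuous. Put $M \coloneqq \|u\|_{L^\infty(0,T;Y)}$ and let $N \subset [0,T]$ denote the null set on which $\|u(t)\|_Y > M$.

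\emph{Pointwise inclusion in $Y$.} For arbitrary $t_0 \in [0,T]$, choose a sequence $t_n \in [0,T] \setminus N$ with $t_n \to t_0$. The hypothesis $u \in C([0,T];X)$ forces $u(t_n) \to u(t_0)$ in $X$, while $\|u(t_n)\|_Y \leq M$. Interpreting each $u(t_n)$ via the canonical embedding $Y \hookrightarrow Y''$, the sequence is norm-bounded in the dual space $Y''$, so Banach--Alaoglu extracts a subsequence converging weakly-$*$ to some $F \in Y''$ with $\|F\|_{Y''} \leq M$. For any $\phi \in X' \hookrightarrow Y'$, the two pairings agree, $\langle \phi, u(t_n)\rangle_{Y',Y} = \langle \phi, u(t_n)\rangle_{X',X}$, and by the $X$-convergence the right-hand side tends to $\langle \phi, u(t_0)\rangle_{X',X}$. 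Hence $\langle F, \phi\rangle_{Y'',Y'} = \langle \phi, u(t_0)\rangle_{X',X}$ for all $\phi \in X'$, and by density of $X'$ in $Y'$ this determines $F$ uniquely; standard identification gives $u(t_0) \in Y$ with $\|u(t_0)\|_Y \leq M$.

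\emph{Weak continuity through density.} For any $\phi \in X' \subset Y'$, continuity of $t \mapsto \langle \phi, u(t)\rangle_{Y',Y}$ is immediate from $u \in C([0,T];X)$ together with the coincidence of the two pairings. For a general $\phi \in Y'$, density of $X'$ in $Y'$ allows one to select, for each $\varepsilon > 0$, some $\phi_\varepsilon \in X'$ with $\|\phi - \phi_\varepsilon\|_{Y'} < \varepsilon$. Using the uniform pointwise bound $\|u(t)\|_Y \leq M$ from the previous step,
\[
\bigl| \langle \phi, u(t)\rangle_{Y',Y} - \langle \phi, u(s)\rangle_{Y',Y} \bigr|
\leq 2 M \varepsilon + \bigl| \langle \phi_\varepsilon, u(t) - u(s)\rangle_{X',X} \bigr|,
\]
and the second term tends to zero as $s \to t$ by the $X$-continuity of $u$. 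Since $\varepsilon > 0$ is arbitrary, continuity follows, and therefore $u \in C_w([0,T];Y)$.

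\emph{Main obstacle.} The delicate point is the pointwise inclusion $u(t_0) \in Y$ at \emph{every} $t_0$ rather than merely almost every $t_0$: a priori the weak-$*$ limit $F$ lives only in the bidual $Y''$, and the role of the density assumption on $X' \hookrightarrow Y'$ is precisely to pin $F$ down from its values on $X'$, where it must coincide with the $X$-limit $u(t_0)$. This is the mechanism that upgrades continuity in the weaker space $X$, using the boundedness in the stronger space $Y$, to the desired regularity in $Y$.
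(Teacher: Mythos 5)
The second step (weak continuity via density of $X'$ in $Y'$) is correct and is the standard argument. The gap is in the first step: after extracting a weak-$*$ limit $F$ in $Y''$ of the bounded sequence $u(t_{n_k})$ and identifying $F$ with $u(t_0)$ on $X'$, you assert that ``standard identification gives $u(t_0)\in Y$.'' That conclusion is not automatic. Banach--Alaoglu lands you in the bidual $Y''$, and a weak-$*$ limit of a bounded sequence in $Y\hookrightarrow Y''$ can, in general, lie in $Y''\setminus Y$ when $Y$ is not reflexive (Goldstine's theorem says $B_Y$ is weak-$*$ dense in $B_{Y''}$). The density of $X'$ in $Y'$ pins $F$ down \emph{uniquely} as an element of $Y''$, but it does not, by itself, force $F$ to belong to the canonical image of $Y$. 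What is actually needed at this point is something like reflexivity of $Y$ (so $Y''=Y$), or the property that the closed unit ball of $Y$ is closed in $X$ — the hypothesis of Strauss's lemma.

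If $Y$ is reflexive the argument can be made both correct and simpler: by Eberlein--\v{S}mulian a subsequence of $u(t_n)$ converges weakly \emph{in $Y$} to some $y\in Y$; since $Y\hookrightarrow X$ this gives weak convergence in $X$, which combined with the strong convergence $u(t_n)\to u(t_0)$ in $X$ yields $y=u(t_0)\in Y$ with $\|u(t_0)\|_Y\leq M$, and no excursion into the bidual is needed. In the present paper the lemma is only ever applied with $Y$ a Hilbert space ($L^2$, $L^2_\sigma$, $W^{1,2}$), so the gap is harmless for the paper's purposes; but as a proof of the lemma as you stated it (general Banach $Y$ with only the density hypothesis), the identification $F\in Y$ must either be justified with an additional argument or the hypothesis must be strengthened to include reflexivity of $Y$ or closedness of $B_Y$ in $X$.
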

	\begin{proof}
		See Abels \cite[Lemma 4.1]{Abels2009CMP}.
	\end{proof}
	
{
The following lemma is a direct consequence of H\"older's inequality, the interpolation {$ H^{1/2}(\Omega) =(L^2(\Omega),H^1(\Omega))_{1/2} $ and the embedding $ H^{1/2}(\Omega) \hookrightarrow L^4(\Omega) $} in two dimensions. 
\begin{lemma}
    Let $ I \subset \bbr $ and $ \Omega \subset \bbr^2 $ be a domain with smooth boundary. For $f \in L^2(I; W^{1,2}(\Omega)) \cap L^\infty(I; L^2(\Omega))$, it holds
    \begin{equation}
    	\label{eqs:interpolation-L4}
        \int_I \norm{f(t)}_{L^4(\Omega)}^4 \dt 
        \leq C \left(\int_I \norm{f(t)}_{W^{1,2}(\Omega)}^2 \dt\right)
        \left(\sup_{t \in I} \norm{f(t)}_{L^2(\Omega)} \right)^2 .
    \end{equation}
\end{lemma}}
	
\subsubsection{Mollifiers}
	For a {radially symmetric} function $ \psi \in C_0^\infty(\bbr^d) $ with $ \int_{\bbr^d} \psi \dx = 1 $ 
 and $ \mathrm{supp} \,\psi \subset \bbr^d $, define $ \psi_\eta(x) \coloneqq \eta^{-d} \psi(\frac{x}{\eta}) $, where $ \eta > 0 $. Then we introduce a regularization operator $ \sR_\eta $ by
    \begin{equation}
    	\label{eqs:mollification}
    	\sR_\eta \bw \coloneqq \psi_\eta * \bw 
    	= \int_{\bbr^d} \psi_\eta(x - y) \bw(y) \,\d y
            = \int_{\Omega} \psi_\eta(x - y) \bw(y) \,\d y,
    \end{equation}
    {for locally integrable $\mathbf{w}$}. 
    Moreover, the following properties are satisfied.
    \begin{lemma}[Mollification]
        \label{lem:mollification}
        Let $ X $ be a Banach space. If $ \bv \in L^1_{loc}(\bbr^d; X) $, then we have $ \sR_\eta \bv \in C^\infty(\bbr^d; X) $. Furthermore, the following holds:
        \begin{enumerate}
            \item For $ \bu, \bv \in L^1_{loc}(\bbr^d) $, it holds that
                \begin{equation}
                	\label{eqs:mollification-commute}
                	\int_{\bbr^d} \sR_\eta \bu \cdot \bv  \dx = \int_{\bbr^d} \bu \cdot \sR_\eta \bv \dx.
                \end{equation}
            \item If $ \bv \in L_{loc}^p(\bbr^d; X) $, $ 1 \leq p < \infty $, then $ \sR_\eta \bv \in L_{loc}^p(\bbr^d; X) $ and $ \sR_\eta \bv \rightarrow \bv $ in $ L_{loc}^p(\bbr^d; X) $ as $ \eta \rightarrow 0 $.
            \item If $ \bv \in L^p(\bbr^d; X) $, $ 1 \leq p < \infty $, then $ \sR_\eta \bv \in L^p(\bbr^d; X) $, $ \norm{\sR_\eta \bv}_{L^p(\bbr^d; X)} \leq \norm{\bv}_{L^p(\bbr^d; X)} $ and $ \sR_\eta \bv \rightarrow \bv $ in $ L^p(\bbr^d; X) $ as $ \eta \rightarrow 0 $.
            \item If $ \bv \in L^p(\bbr^d; X) $, $ 1 \leq p < \infty $, then $ \norm{\sR_\eta \bv}_{L^\infty(\bbr^d; X)} \leq C(\eta) \norm{\bv}_{L^p(\bbr^d; X)} $ with $ C(\eta) > 0 $ depending on $ \eta $.
            \item If $ \bv \in L^\infty(\bbr^d; X) $, then $ \sR_\eta \bv \in L^\infty(\bbr^d; X) $, and $ \norm{\sR_\eta \bv}_{L^\infty(\bbr^d; X)} \leq \norm{\bv}_{L^\infty(\bbr^d; X)} $.
        \end{enumerate}
    \end{lemma}
    \begin{proof}
        The first statement is a direct consequence of the {radial symmetry} of $\psi$, while the fourth one can be derived by taking the supremum norm of $ \psi $. The remaining statements are referred to \cite[Theorem 11.3]{FN2017}.
    \end{proof}

\section{Matrix-Valued Equation with Stress Diffusion}
	\label{sec:B}

In this section, we are going to solve the tensor-valued equation for $ \bbb $ in dimension $d\in\{2,3\}$, 
\begin{equation}
		\label{eqs:B-equation}
		\begin{alignedat}{3}
			\pt \bbb + \sR_\eta \bv \cdot \nabla \bbb - \bbb \tran{\nabla} \sR_\eta \bv - \nabla \sR_\eta \bv \bbb + (\bbb - \bbi) & = \frac{\kappa}{\mu_\eta(\xi)} \Delta \bbb,  &&  \tin Q_T \\
			\ptial{\bn} \bbb & = {\mathbb{O}}, && \ton S_T \\
			\bbb(0) & = \bbb_0, && \tin \Omega,
		\end{alignedat}
\end{equation}
with $ \bbb_0 \in L^2(\Omega;  \bbr^{d \times d}_{\mathrm{sym}})$ positive definite a.e.~in $\Omega$, $d\in\{2,3\}$, and given data $ (\bv, \xi) $ satisfying
	\begin{equation}
		\label{eqs:B-v-phi}
		\bv \in 
  C([0,T]; W_{0,\sigma}^{1, 2}(\Omega;\bbr^d)), 
            \quad
		\xi \in C([0,T]; W^{2,2}(\Omega)) \cap W^{1,2}(0,T; L^2(\Omega)).
	\end{equation}
	Here, $ \sR_\eta $ is the mollifier defined in \eqref{eqs:mollification} and $ \mu_\eta(\phi) \coloneqq \mu(\sR_\eta \phi) $. As usual, we give the precise definition of \textit{finite energy} weak solutions of the system \eqref{eqs:B-equation}.
\begin{definition}
\label{def:weak-B}
Let $d\in\{2,3\}$, $ T > 0 $, $ \bbb_0 \in L^2(\Omega; \bbr_{\mathrm{sym}}^{d \times d}) $ positive definite a.e.~in $ \Omega $, $\tr \ln \bbb_0 \in L^1(\Omega)$, and $ (\bv, \xi) $ satisfy \eqref{eqs:B-v-phi}. We call $ \bbb $ a finite energy weak solution of \eqref{eqs:B-equation} with data $ (\bbb_0, \bv, \xi) $, provided that
\begin{enumerate}
\item the {tensor} $ \bbb $ satisfies
\begin{gather*}
\bbb \text{ is symmetric positive definite a.e.~in } Q_T; \\
\bbb \in C_w([0,T]; L^2(\Omega; \bbr_{\mathrm{sym}}^{d \times d})) \cap L^2(0, T; W^{1,2}(\Omega; \bbr_{\mathrm{sym}}^{d \times d})); \\
\pt \bbb \in L^2(0, T; [W^{1,2}(\Omega; \bbr_{\mathrm{sym}}^{d \times d})]'); \\
\tr \ln \bbb \in L^\infty(0,T; L^1(\Omega)) 
\cap L^2(0,T; W^{1,2}(\Omega)); 
\end{gather*}
\item for all $ t \in (0,T) $ and all $ \bbc \in C^\infty(\overline{Q_T}; \bbr_{\mathrm{sym}}^{d \times d}) $, we have 
\begin{align}
\int_0^t \int_\Omega & \Big( \bbb : \pt \bbc 
    - {(\sR_\eta \bv \cdot \nabla) \bbb : \bbc} \Big) \dx \dtau \nonumber\\
& \quad + \int_0^t \int_\Omega \Big(\big(\nabla \sR_\eta \bv \bbb + \bbb \tran{\nabla} \sR_\eta \bv \big) : \bbc 
    - \kappa \nabla \bbb : \nabla \frac{\bbc}{\mu_\eta(\xi)} \Big) \dx \dtau 
    \label{eqs:weak-B-formulation} \\
& = \int_0^t \int_\Omega (\bbb : \bbc - \tr \bbc) \dx \dtau
    + \int_\Omega \bbb(\cdot, t) : \bbc(\cdot, t) \dx
    - \int_\Omega \bbb_0 : \bbc(\cdot, 0) \dx; \nonumber
\end{align}
\item for a.e.~$ t \in (0,T) $ we have {the following estimate}
\begin{equation}
\label{eqs:weak-B-energyestimate}
\begin{aligned}
    & \norm{\tr(\bbb - \ln \bbb)(t)}_{L^1}
        + \norm{\bbb(t)}_{L^2}^2 \\
    & \quad + \int_0^t \Big(\norm{\bbb(\tau)}_{L^2}^2
        + \kappa \norm{\nabla \bbb(\tau)}_{L^2}^2\Big) \dtau \\
    & \quad + \int_0^t \Big(\norm{\tr(\bbb
        + \inv{\bbb} - 2\bbi)(\tau)}_{L^1} 
        + \kappa \norm{\nabla \tr \ln \bbb}_{L^2}^2\Big) \dtau \\
    & \leq C,
\end{aligned}
\end{equation}
{where the constant $C>0$ depends on $\norm{\tr\ln\bbb_0}_{L^1}$, $\norm{\bbb_0}_{L^2}$, $\kappa$, bounds of $\mu, \mu'$, $\bv, \xi$, $\eta$, and $T$.}
\end{enumerate}
\end{definition}
	Then main result of this section reads as follows:
	\begin{theorem}
		\label{thm:B-equation}
		Let $ T > 0 $ and $ \Omega \subset\bbr^d$, $d\in\{2,3\}$, be a smooth bounded domain. Assume that $ (\bv, \xi) $ satisfies \eqref{eqs:B-v-phi}, $ \bbb_0 \in L^2(\Omega; \bbr_{\mathrm{sym}}^{d \times d}) $ positive definite a.e.~in $ \Omega $ and $\tr \ln \bbb_0 \in L^1(\Omega)$. Then, there exists a finite energy weak solution of \eqref{eqs:B-equation} in the sense of Definition \ref{def:weak-B}. Moreover, the solution is unique and depends continuously on $ (\bv, \xi, \bbb_0) $.
	\end{theorem}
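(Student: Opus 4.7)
My plan is to prove existence via a two-level approximation: an inner Galerkin scheme combined with an outer entropy regularization that replaces the singular logarithm by a globally defined convex surrogate. Since $\sR_\eta \bv$ and $\nabla \sR_\eta \bv$ are bounded on $\overline{Q_T}$ with a constant depending on $\eta$ (cf.~Lemma \ref{lem:mollification}) and $1/\mu_\eta(\xi)$ is bounded above and below by Assumption \ref{ass:lower-upper-bounds}, equation \eqref{Eqs:B-equation} is a linear parabolic system with smooth drift and bounded, strictly positive diffusion coefficient. I would therefore fix an orthonormal basis of $L^2(\Omega;\bbr_{\mathrm{sym}}^{d\times d})$ consisting of Neumann-Laplacian eigenfunctions, seek a Galerkin approximant $\bbb^N$ in the first $N$ modes, and obtain local-in-time solvability of the resulting ODE system by Carathéodory theory. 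To make sense of $\tr\ln\bbb^N$ at the approximate level, where $\bbb^N$ is not a priori positive definite, I would follow Barrett--Boyaval \cite{BB2011} and introduce a regularization $G_\delta:\bbr_{\mathrm{sym}}^{d\times d}\to\bbr$ that agrees with $\tr(\bbb-\ln\bbb-\bbi)$ on matrices whose eigenvalues exceed $\delta$ and is smoothly and quadratically extended below this threshold; the derivative $G_\delta'$ is a globally Lipschitz map approximating $\bbi-\inv{\bbb}$.

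The core of the argument is the derivation of the energy estimate \eqref{Eqs:weak-B-energyestimate} uniformly in $N$ and $\delta$. Testing the approximate equation against $\bbb^N$ yields the $L^\infty(L^2)\cap L^2(W^{1,2})$ bound, the coupling $\bbb\tran{\nabla}\sR_\eta\bv+\nabla\sR_\eta\bv\,\bbb$ being absorbed via \eqref{Eqs:symmetric-product} together with the $L^\infty$-bound on $\nabla\sR_\eta\bv$, and the diffusion generating $\kappa\norm{\nabla\bbb^N}_{L^2}^2/\overline\mu$. Testing against $G_\delta'(\bbb^N)$ and using the identities \eqref{Eqs:partial-B-entropy}--\eqref{Eqs:symmetric-product} produces the entropy bound on $\tr(\bbb-\ln\bbb-\bbi)$, the dissipation $\norm{\tr(\bbb+\inv{\bbb}-2\bbi)}_{L^1}$, and, through the chain rule on the diffusion term, the crucial $\norm{\nabla\tr\ln\bbb}_{L^2}^2$ contribution after $\delta\to 0$. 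From these bounds plus the implied estimate $\pt\bbb^N\in L^2(0,T;[W^{1,2}]')$, I would pass to the Galerkin limit via Aubin--Lions compactness and then send $\delta\to 0$, recovering a limit $\bbb$ satisfying \eqref{Eqs:weak-B-formulation} and \eqref{Eqs:weak-B-energyestimate} through weak lower semicontinuity of the convex functionals involved.

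The principal subtlety is the positive definiteness of the limit $\bbb$, which does not pass through the Galerkin step directly: here the uniform $L^\infty(0,T;L^1(\Omega))$-bound on $\tr(\bbb-\ln\bbb-\bbi)$ forces $\bbb$ to be positive definite almost everywhere, since a non-positive eigenvalue on a set of positive measure in $Q_T$ would yield $\tr\ln\bbb = -\infty$ on that set and thereby contradict the entropy bound. Uniqueness and continuous dependence on $(\bv,\xi,\bbb_0)$ follow from a Gronwall-type argument on the difference $\bbb_1-\bbb_2$ of two solutions: testing the difference equation against itself and controlling the nonlinear coupling using the $L^\infty$-bounds on $\nabla\sR_\eta\bv$ and $1/\mu_\eta(\xi)$ yields an inequality of the form $\ddt\norm{\bbb_1-\bbb_2}_{L^2}^2+\kappa\underline\mu\norm{\nabla(\bbb_1-\bbb_2)}_{L^2}^2\leq C(\eta)\norm{\bbb_1-\bbb_2}_{L^2}^2+(\text{data})$. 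I expect the most delicate technical step to be the identification, in the $\delta\to 0$ limit, of $G_\delta'(\bbb^{\delta})$ with $\bbi-\inv{\bbb}$ in a topology strong enough to handle its nonlinear coupling against $\nabla\sR_\eta\bv$ inside the entropy identity; this should nevertheless be within reach using the pointwise monotonicity of $G_\delta$ together with the strong compactness of $\bbb^{\delta}$ coming from the $L^2(W^{1,2})$-bound.
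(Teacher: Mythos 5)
Your overall architecture is the same as the paper's: a Galerkin scheme in Neumann--Laplacian eigenmodes combined with an outer entropy regularization $G_\delta$ in the style of Barrett--Boyaval, $L^\infty(L^2)\cap L^2(W^{1,2})$ bounds from testing against $\bbb^N$, an entropy estimate for the bound on $\tr(\bbb-\ln\bbb-\bbi)$ and the dissipation terms, Aubin--Lions compactness, and Gronwall for uniqueness. The main issue is in how the entropy estimate is produced.

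You write that you would obtain the entropy bound by ``testing against $G_\delta'(\bbb^N)$,'' but $G_\delta'(\bbb^N)$ is a nonlinear function of the Galerkin approximant and does \emph{not} lie in the Galerkin space $\bb{H}_l$. The weak Galerkin identity \eqref{Eqs:B_Galerkin_reg} only holds for test functions in $\bb{H}_l$, so pairing it with $G_\delta'(\bbb^N)$ produces uncontrolled projection errors. The paper circumvents this in a specific order that your proposal omits: first derive \emph{higher} regularity at the Galerkin level by testing against $\Delta\bbb_l^\delta$ and $\pt\bbb_l^\delta$ (valid tests, since the Galerkin space is built from Neumann eigenfunctions and is invariant under $\Delta$), obtaining the bounds in \eqref{Eqs:uniform-B-Galerkin-higher}; then pass to the Galerkin limit $l\to\infty$ to recover the \emph{pointwise} equation \eqref{Eqs:B-delta-strong-formulation} for $\bbb^\delta$; and only then multiply that strong formulation by $\frac{\mu_\eta(\xi)}{2}\bigl(\bbi - G_\delta'(\bbb^\delta)\bigr)$, invoking the regularity of \cite[Lemma 6.1]{BLS2017} to justify the manipulation. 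Without this step your entropy estimate is not obtained at any level of the approximation. Note also that the correct multiplier carries the weight $\frac{\mu_\eta(\xi)}{2}$: because the stress diffusion is $\frac{\kappa}{\mu_\eta(\xi)}\Delta\bbb$, the phase-dependent factors must cancel to produce the dissipation $\kappa\norm{\nabla\tr\ln\beta_\delta(\bbb^\delta)}_{L^2}^2$, and without it extra terms involving $\nabla\mu_\eta(\xi)$ appear that are hard to absorb.

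On positive definiteness your heuristic --- that a degenerate eigenvalue on a set of positive measure contradicts the entropy bound --- is directionally right, but as stated it is circular because $\tr\ln\bbb$ is not defined at the limit before positivity is established; one must argue through $\tr(\bbb^\delta - G_\delta(\bbb^\delta))$ and the pointwise inequality \eqref{Eqs:tr(A-G(A))} together with Fatou. The paper instead uses a cleaner Cauchy--Schwarz argument (Lemma \ref{lem:positive_B}): split $\abs{\bw}^2 = \abs{\tran{\bw}\beta_\delta^{-1/2}(\bbb^\delta)\beta_\delta^{1/2}(\bbb^\delta)\bw}$, use the uniform bound on $\int \tr\bigl(\beta_\delta^{-1}(\bbb^\delta)\bigr)$ from the dissipation, and send the other factor to zero via the strong convergence $\beta_\delta(\bbb^\delta)\to[\bbb]_+$. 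Either route is viable; the latter avoids technicalities about exchanging limits and lower semicontinuity of the matrix logarithm.

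Your uniqueness sketch via Gronwall matches the paper's Section \ref{sec:B-proof}, with the caveat that the difference-equation estimates must handle the term $\kappa\Delta\bbb_2\bigl(\tfrac{1}{\mu_\eta(\xi_1)}-\tfrac{1}{\mu_\eta(\xi_2)}\bigr)$ arising from the $\xi$-dependence of the diffusion coefficient; this is where the $L^\infty_t L^2_x$ control of $\overline\xi$ and the higher regularity $\nabla\bbb_2\in L^2$ enter, a point worth making explicit.
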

    \begin{remark}
        This theorem is valid for $(\bv, \xi)$ satisfying \eqref{eqs:B-v-phi}, which is a quite general and rather strong assumption. Later, we will only consider the case with piecewise-in-time constant functions satisfying certain spatial regularity, which, for sure, fulfill \eqref{eqs:B-v-phi} on each subinterval.
    \end{remark}

    {
    We now dedicate the remainder of this section to the step-by-step proof of Theorem \ref{thm:B-equation}. We begin by introducing a two-layer approximation in Section \ref{sec:Two-layer-approximation} and establish uniform bounds for the approximate solution in Section \ref{sec:uniform-B}. Utilizing standard ODE theory, we justify the maximal existence time for solutions to the approximate system in Section \ref{sec:B-max-existence-time}. 
    With these bounds in place, we apply compactness arguments to justify the passage to the limit in the Galerkin approximation in Section \ref{sec:limit_B_l}. Subsequently, we derive uniform energy estimates at the regularized level in Section \ref{sec:unifrom_B} and perform the corresponding limit passage in Section \ref{sec:limit_B_delta}. The positive definiteness of $\bbb$ is established in Section \ref{sec:B-positive-delta}. Finally, we conclude the proof of Theorem \ref{thm:B-equation} by providing the necessary statements regarding the limit system in Section \ref{sec:B-proof}.
    }
    

We start with the approximations.
\subsection{Two-layer Approximation}
	\label{sec:Two-layer-approximation}
	By the classical theory of eigenvalue problems for symmetric linear elliptic operators, one can carry out the finite space approximation. Let $ \{\bba_k\}_{k \in \bbn} $ be the eigenfunctions of the matrix-valued Laplace operator with homogeneous Neumann boundary conditions, namely,
	\begin{equation*}
		- \Delta \bba_l = \lambda_l \bba_l \tin \Omega, \quad 
		\ptial{\bn} \bba_l = {\mathbb{O}} \ton \partial \Omega,
	\end{equation*}
	with eigenvalues $ 0 < \lambda_1 \leq \lambda_2 \leq \cdots \leq \lambda_l \rightarrow \infty $, $ l \rightarrow \infty $. Moreover, {it holds}
	\begin{equation*}
		\bba_l \in W^{2,2}(\Omega; \bbr^{d \times d}_{\mathrm{sym}}) \cap C^\infty(\Omega; \bbr^{d \times d}_{\mathrm{sym}}).
	\end{equation*}
	Then $ \{\bba_l\}_{l \in \bbn} $ is an orthonormal basis in $ L^2(\Omega; \bbr^{d \times d}_{\mathrm{sym}}) $ and an orthogonal basis in $ W^{1,2}(\Omega; \bbr^{d \times d}_{\mathrm{sym}}) $ (see also \cite{BLS2017} for the compressible Oldroyd-B case). Define $ \mathbb{H}_l = \mathrm{span} \{\bba_1, \dots, \bba_l\} \subset L^2(\Omega; \bbr^{d \times d}_{\mathrm{sym}}) $ with $ l \in \bbn $ and denote by $ \mathcal{P}_l: W^{1,2}(\Omega; \bbr^{d \times d}_{\mathrm{sym}}) \rightarrow \bb{H}_l $ the orthogonal projection on $ \bb{H}_l $ with respect to the inner product in $ L^2(\Omega; \bbr_{\mathrm{sym}}^{d \times d}) $ such that
	\begin{equation*}
		\innerr{\bba}{\bbb} = \int_\Omega \bba : \bbb \dx, \quad \forall\, \bba, \bbb \in \bbr^{d \times d}.
	\end{equation*}
	Now we make the Galerkin ansatz to approximate the solution $ \bbb $ of \eqref{eqs:B-equation} as 
	\begin{equation*}
		\bbb_l = \sum_{k = 1}^{l} c_k^l(t) \bba_k(x), \quad 
		\bbb_l(0) = \mathcal{P}_l \bbb_0,
	\end{equation*}
	where $ \{c_k^l\} $ are scalar functions of time. Then we arrive at the following approximate system
	\begin{equation}
		\label{eqs:B_Galerkin}
		\begin{aligned}
			\innerr{\pt \bbb_l}{\bb{W}} & + \innerr{\sR_\eta \bv \cdot \nabla \bbb_l}{\bb{W}} - \innerr{\bbb_l \tran{\nabla} \sR_\eta \bv}{\bb{W}} \\
			& - \innerr{\nabla \sR_\eta \bv \bbb_l}{\bb{W}} + \innerr{\bbb_l - \bbi}{\bb{W}} 
			= - \innerr{\kappa \nabla \bbb_l}{\nabla \frac{\bb{W}}{\mu_\eta(\xi)}},
		\end{aligned}
	\end{equation}
	for all $ \bb{W} \in \bb{H}_l $, with the initial data $ \bbb_l(0) = \mathcal{P}_l \bbb_0 $. 
    {
	Classically, one can employ the usual ODE theory to verify the solvability of \eqref{eqs:B_Galerkin}. However, in our paper, this is not the case. In fact, we need to ensure that $ \bbb $ is positive definite such that the energy $ \tr(\bbb - \ln \bbb - \bbi) $ makes sense (especially the logarithmic term). }
        Motivated by \cite{BB2011}, we employ a regularization for the logarithmic function $ G(s) = \ln (s) $ to construct a family of symmetric positive definite approximations for $ \bbb $, namely, one defines a $ C^1 $ function for $ \delta > 0 $
	\begin{equation*}
		G_\delta(s) = \left\{
		\begin{alignedat}{2}
			& \frac{s}{\delta} + \ln \delta - 1, \quad & s < \delta, \\
			& \ln s, \quad & s \geq \delta,
		\end{alignedat}
		\right.
	\end{equation*}
	and a cut-off function $ \beta_\delta(s) = [G_\delta'(s)]^{-1} = \max\{s, \delta\} $, for all $ s \in \bbr $. This kind of regularization was also applied in, e.g., \cite{BLS2017} for a compressible Oldroyd-B system, or \cite{GKT2022} for a Cahn--Hilliard tumor growth model including viscoelasticity.
	
	Let us recall the following result from \cite[Lemma 2.1]{BB2011}. 
	\begin{lemma}
		For all $ \bba, \bbc \in \bbr_{\mathrm{sym}}^{d \times d} $, $d\in\{2,3\}$, and for any $ \delta \in (0,1) $, it holds
		\begin{subequations}
			\begin{align}
				\beta_\delta(\bba) G_\delta'(\bba) = G_\delta'(\bba) \beta_\delta(\bba) 
					& = \bbi, \\
				\tr\big(\beta_\delta(\bba) + \beta_\delta^{-1}(\bba) - 2\bbi\big) 
					& \geq 0, \\
				\tr\big(\bba - G_\delta(\bba) - \bbi\big) 
					& \geq 0, \\
				(\bba - \beta_\delta(\bba)) : (\bbi - G_\delta'(\bba)) 
					& \geq 0, \\
				(\bba - \bbc): G_\delta'(\bbc) 
					& \geq \tr\big(G_\delta(\bba) - G_\delta(\bbc)\big).
			\end{align}
		In addition, if $ \delta \in (0, \onehalf] $, it holds
		\begin{align}
			\label{eqs:tr(A-G(A))}
            {\tr(\bba - G_\delta(\bba))} 
            &{ \geq  \max\Big\{ \onehalf \abs{\bba} , \, \frac{1}{2 \delta} \abs{[\bba]_{-}} \Big\},
            }\\
			\bba : (\bbi - G_\delta'(\bba)) & \geq \onehalf \abs{\bba} - d, 
		\end{align}
		where $ [\cdot]_{-} $ denotes the negative part function defined by $ [s]_{-} \coloneqq \min\{s, 0\}, \forall s \in \bbr $.
		\end{subequations}
	\end{lemma}
	Now employing the regularization from above with respect to $ \delta > 0 $, {we introduce} the regularized Oldroyd-B equation with symmetric positive definite approximations.
    \begin{equation}
    \label{eqs:B_Galerkin_reg}
    \begin{aligned}
        \innerr{\pt \bbb_l^\delta}{\bb{W}} & 
        + \innerr{\sR_\eta \bv \cdot \nabla \beta_\delta(\bbb_l^\delta)}{\bb{W}} - \innerr{\beta_\delta(\bbb_l^\delta) \tran{\nabla} \sR_\eta \bv}{\bb{W}} \\
        & - \innerr{\nabla \sR_\eta \bv \beta_\delta(\bbb_l^\delta)}{\bb{W}} + \innerr{\bbb_l^\delta - \bbi}{\bb{W}} 
        = - \innerr{\kappa \nabla \bbb_l^\delta}{\nabla \frac{\bb{W}}{\mu_\eta(\xi)}}
    \end{aligned}
    \end{equation}	
    for all $ \bb{W} \in \bb{H}_l $, subjected to the initial values $ \bbb_l^\delta(0) = \delta\bbi + \mathcal{P}_l \sR_\delta \bbb_0 $, where we used a diagonal shift by $\delta\bbi$, since $\mathcal{P}_l \sR_\delta \bbb_0$ is not necessarily positive definite. Then we have $\delta\bbi + \bbb_0 \to \bbb_0$ in $L^2(\Omega; \bbr_\mathrm{sym}^{d \times d})$, as $\delta \to 0$, and $\delta\bbi + \bbb_0$ is positive definite a.e.~in $\Omega$. 
    Note that in the same way as in \cite{BLS2017,FN2017}, \eqref{eqs:B_Galerkin_reg} is a system of ordinary differential equations for $ \bbb_l^\delta $ with respect to the time coefficients, 
    for which the Picard--Lindel{\"o}f theorem is applicable. 
    {In particular, the Lipschitz continuity regarding $ \bbb_l^\delta $ is easily verified since $ \beta_\delta(\cdot) $ is Lipschitz continuous and the other terms are linear in $ \bbb_l^\delta $. 
    {We also note the reformulation 
    \begin{align*}
        \innerr{\sR_\eta \bv \cdot \nabla \beta_\delta(\bbb_l^\delta)}{\bb{W}}
        &= 
        - \innerr{\sR_\eta \bv \cdot \nabla \bb{W}}{\beta_\delta(\bbb_l^\delta)}
        - \innerr{\Div \sR_\eta \bv} {\beta_\delta(\bbb_l^\delta) : \bb{W}}
        \\
        &+ \innerr{\sR_\eta \bv \cdot \bn}{\beta_\delta(\bbb_l^\delta) : \bb{W}}_{\partial\Omega},
    \end{align*}
    which is based on integration by parts, where $\innerr{\cdot}{\cdot}_{\partial\Omega}$ denotes the $L^2$ inner product on $\partial\Omega$.}
    The assumption \eqref{eqs:B-v-phi} ensures the continuity in time of the data.}
    This guarantees the existence of a unique solution on a local time interval {$[0,T_{l,\delta}]$}, where $T_{l,\delta} \in (0, T)$.

\subsection{Uniform bounds}
\label{sec:uniform-B}
{Recalling the scalar inequality $\abs{\beta_\delta(s)} \leq 1 + \abs{s}$, $\forall\, s\in\bbr, \delta\in(0,1)$, it follows from Young's inequality that
$\abs{\beta_\delta(\bbb_l^\delta)} \leq \sqrt{2d} + \sqrt{2} \abs{\bbb_l^\delta}$ for all $\delta\in(0,1)$. Therefore, 
setting $ \bb{W} = \bbb_l^\delta $ in \eqref{eqs:B_Galerkin_reg}, we have}
	\begin{align}
        \nonumber
		\onehalf & \ddt \norm{\bbb_l^\delta}_{L^2}^2
		+ \int_\Omega \big( \sR_\eta \bv \cdot \nabla\big) \beta_\delta(\bbb_l^\delta) : \bbb_l^\delta \dx
		+ \norm{\bbb_l^\delta}_{L^2}^2
		+ \frac{\kappa}{\underline{\mu}} \norm{\nabla \bbb_l^\delta}_{L^2}^2 \\
        \label{eqs:uniform-B-l-delta-ddt}
		& \leq \int_\Omega \tr \bbb_l^\delta \dx
		+ \kappa \int_\Omega \frac{\mu_\eta'(\xi)}{\mu_\eta^2(\xi)} (\nabla \sR_\eta \xi \cdot \nabla) \bbb_l^\delta : \bbb_l^\delta \dx
		+ 2 \int_\Omega \bbb_l^\delta \beta_\delta(\bbb_l^\delta) : \nabla \sR_\eta \bv \dx \\
        \nonumber
		& \leq \int_\Omega \abs{\tr \bbb_l^\delta} \dx
		+ \frac{\kappa \overline{\mu}'}{\underline{\mu}^2} \int_\Omega \abs{(\nabla \sR_\eta \xi \cdot \nabla) \bbb_l^\delta : \bbb_l^\delta} \dx
		+ 2 \int_\Omega (\sqrt{2d} + \sqrt{2} \abs{\bbb_l^\delta}) \abs{\bbb_l^\delta} \abs{\nabla \sR_\eta \bv} \dx.
	\end{align}
	In light of H\"older's and Young's inequalities, one obtains
    \begin{align*}
        \int_\Omega \abs{(\nabla \sR_\eta \xi \cdot \nabla) \bbb_l^\delta : \bbb_l^\delta} \dx
    	& \leq \varepsilon \norm{\nabla \bbb_l^\delta}_{L^2}^2 + C \norm{\nabla \sR_\eta \xi}_{L^\infty}^2 \norm{\bbb_l^\delta}_{L^2}^2 ,
         \\
    	2\int_\Omega (\sqrt{2d} + \sqrt{2} \abs{\bbb_l^\delta}) \abs{\bbb_l^\delta} \abs{\nabla \sR_\eta \bv} \dx
	        & \leq C \big(1 + \norm{\nabla \sR_\eta \bv}_{L^\infty}\big) \norm{\bbb_l^\delta}_{L^2}^2 + C \norm{\nabla \bv}_{L^2}^2,
    \end{align*}
	for some small $ \varepsilon > 0 $ that will be specified later.
    {Note that in \cite[Appendix C]{BLS2017} it is shown that $\beta_\delta(\mathbb{G}) \in W^{1,2}(\Omega;\bbr^{d\times d}_\mathrm{sym})$ with $\norm{\nabla\beta_\delta(\mathbb{G})}_{L^2} \leq \norm{\nabla\mathbb{G}}_{L^2}$ for $\mathbb{G}\in W^{1,2}(\Omega;\bbr^{d\times d}_\mathrm{sym})$. 
    Hence, we infer from H\"older's and Young's inequalities that
    \begin{align*}
        \int_\Omega \abs{\big(\sR_\eta \bv \cdot \nabla\big) \beta_\delta(\bbb_l^\delta) : \bbb_l^\delta} \dx
            & \leq \norm{\nabla \bbb_l^\delta}_{L^2} \norm{\sR_\eta \bv}_{L^\infty} \norm{\bbb_l^\delta}_{L^2}\\
            & 
	        \leq \varepsilon \norm{\nabla \bbb_l^\delta}_{L^2}^2 + C \norm{\sR_\eta \bv}_{L^\infty}^2 \norm{\bbb_l^\delta}_{L^2}^2,
    \end{align*}
    for some $\varepsilon\in(0,1)$ that will be specified later.} 
    In addition, by the Cauchy-Schwarz inequality and Young's inequality, we derive
	\begin{equation*}
		\int_\Omega \abs{\tr \bbb_l^\delta} \dx
        \leq \int_\Omega \sqrt{d} \abs{\bbb_l^\delta} \dx
        \leq \frac{d}{2} \abs{\Omega} + \frac12 \norm{\bbb_l^\delta}_{L^2}^2.
	\end{equation*} 
	Then integrating \eqref{eqs:uniform-B-l-delta-ddt} over $ (0,t) $, $ t \in (0,T_{l, \delta}) $ together with choosing $ \varepsilon \leq \frac{\kappa}{2 \underline{\mu}} $ yields
	\begin{align}
		& \onehalf \norm{\bbb_l^\delta(t)}_{L^2}^2
			+ \onehalf \int_0^t \norm{\bbb_l^\delta(\tau)}_{L^2}^2 \dtau
			+ \frac{\kappa}{2 \underline{\mu}} \int_0^t \norm{\nabla \bbb_l^\delta(\tau)}_{L^2}^2 \dtau 
			\nonumber \\
			& \leq 
			\onehalf \norm{\bbb_l^\delta(0)}_{L^2}^2
            + \frac{d}{2} \abs{\Omega} t
			+ C \int_0^t \norm{\nabla \bv(\tau)}_{L^2}^2 \dtau 
			\nonumber \\
			& \quad 
			+ C \int_0^t (1 + \norm{\nabla \sR_\eta \xi}_{L^\infty}^2 + \norm{\sR_\eta \bv}_{L^\infty}^2 + \norm{\nabla \sR_\eta \bv}_{L^\infty}) \norm{\bbb_l^\delta(\tau)}_{L^2}^2 \dtau,
			\nonumber
	\end{align}
        which, by using a Gronwall argument (cf.~\cite[Lemma 3.1]{GL2017}), gives rise to 
    \begin{equation}
      \label{eqs:uniform-B-Galerkin}
        \begin{aligned}
            \sup_{0 \leq \tau \leq t} & \norm{\bbb_l^\delta(\tau)}_{L^2}^2
        + \int_0^t \norm{\bbb_l^\delta(\tau)}_{L^2}^2 \dtau
        + \int_0^t \norm{\nabla \bbb_l^\delta(\tau)}_{L^2}^2 \dtau \\
        & {
        \leq C\left(
        \norm{\bbb_l^\delta(0)}_{L^2}^2
        + \abs{\Omega} t
        + \int_0^t \norm{\nabla \bv(\tau)}_{L^2}^2 \dtau\right)
        \left(
        1
        + C \zeta(t) e^{C\zeta(t)}\right) }
        \leq C,
        \end{aligned}
	\end{equation}
        for all $ t \in [0, T_{l,\delta}] $, where $C > 0$ is independent of $l\in\bbn$ and $\delta > 0$, but depends on $\kappa$, $\underline{\mu}$, $T_{l,\delta}$, $\normm{\bbb_0}_{L^2}$, $\bv, \xi$, $\eta$, and {$\zeta(t) \coloneqq \int_0^t (1 + \norm{\nabla \sR_\eta \xi}_{L^\infty}^2 + \norm{\sR_\eta \bv}_{L^\infty}^2 + \norm{\nabla \sR_\eta \bv}_{L^\infty}) \dtau$ which is finite due to \eqref{eqs:B-v-phi} and Lemma \ref{lem:mollification}}.

    {
    Moreover, by letting $\bbw = -\Delta \bbb_l^\delta$ and $\bbw = \pt \bbb_l^\delta$, respectively, in \eqref{eqs:B_Galerkin_reg}, it further follows from integration by parts and from H\"older's and Young's inequalities, that
    \begin{align}
        \nonumber
		\onehalf & \ddt \norm{\nabla \bbb_l^\delta}_{L^2}^2
		+ \norm{\nabla \bbb_l^\delta}_{L^2}^2
		+ \frac{\kappa}{\underline{\mu}} \norm{\Delta \bbb_l^\delta}_{L^2}^2 \\
        \nonumber
		& \leq 2 \abs{\int_\Omega \Delta \bbb_l^\delta \beta_\delta(\bbb_l^\delta) : \nabla \sR_\eta \bv \dx }
        + \abs{\int_\Omega \big( \sR_\eta \bv \cdot \nabla\big) \beta_\delta(\bbb_l^\delta) : \Delta \bbb_l^\delta \dx} \\
        \nonumber
		& \leq \frac{\kappa}{2 \underline{\mu}} \norm{\Delta \bbb_l^\delta}_{L^2}^2
        + C \norm{\nabla \sR_\eta \bv}_{L^\infty}^2 (1 + \norm{\bbb_l^\delta}_{L^2}^2)
        + C \norm{\sR_\eta \bv}_{L^\infty}^2 \norm{\nabla \bbb_l^\delta}_{L^2}^2,
	\end{align}
    and
    \begin{align}
        \nonumber
		& \ddt \Big(
            \frac{\kappa}{2 \mu_\eta(\xi)} \norm{\nabla \bbb_l^\delta}_{L^2}^2
            + \onehalf \norm{\bbb_l^\delta}_{L^2}^2
            + \int_\Omega \tr(\bbb_l^\delta) \dx
        \Big)
		+ \norm{\pt \bbb_l^\delta}_{L^2}^2 \\
        \nonumber
		& \leq 2 \abs{\int_\Omega \pt \bbb_l^\delta \beta_\delta(\bbb_l^\delta) : \nabla \sR_\eta \bv \dx}
        +\abs{ \int_\Omega \big( \sR_\eta \bv \cdot \nabla\big) \beta_\delta(\bbb_l^\delta) : \pt \bbb_l^\delta \dx} \\
        \nonumber
		& \quad + \kappa \abs{\int_\Omega (\nabla \sR_\eta \xi \cdot \nabla) \bbb_l^\delta : \pt \bbb_l^\delta \frac{\mu_\eta'(\xi)}{\mu_\eta^2(\xi)} \dx}
        + \abs{\kappa \int_\Omega \pt \sR_\eta \xi \abs{\nabla \bbb_l^\delta}^2 \frac{\mu_\eta'(\xi)}{2 \mu_\eta^2(\xi)} \dx} \\
        \nonumber
		& \leq \frac{1}{2} \norm{\pt \bbb_l^\delta}_{L^2}^2
        + C \norm{\nabla \sR_\eta \bv}_{L^\infty}^2 \norm{\bbb_l^\delta}_{L^2}^2 \\
        \nonumber
		& \qquad
        + C \big(\norm{\sR_\eta \bv}_{L^\infty}^2 + \norm{\nabla \sR_\eta \xi}_{L^\infty}^2
        + \norm{\pt \sR_\eta \xi}_{L^\infty} \big) \norm{\nabla \bbb_l^\delta}_{L^2}^2.
	\end{align}
    With the Gronwall's inequality, \eqref{eqs:uniform-B-Galerkin} and the mollification of the initial data, we have in summary that
    \begin{equation}
		\label{eqs:uniform-B-Galerkin-higher}
		\begin{aligned}
		  \sup_{0 \leq \tau \leq t}\norm{\nabla \bbb_l^\delta (\tau)}_{L^2}^2
        + \int_0^t \norm{\pt \bbb_l^\delta(\tau)}_{L^2}^2 \dtau
		+ \int_0^t \norm{\Delta \bbb_l^\delta(\tau)}_{L^2}^2 \dtau
		\leq C_\delta, 
		\end{aligned}
    \end{equation}
    for all $ t \in [0, T_{l,\delta}] $, where the constant $C_\delta > 0$ is independent of $l \in \bbn$, but depends on $\delta > 0$ and $\kappa$, $\underline{\mu}$, $T_{l,\delta}$, $\normm{\bbb_0}_{L^2}$, $\bv, \xi$ and $\eta$.}

\subsection{Maximal Existence Time}\label{sec:B-max-existence-time}
{Here we have an existence time $ T_{l,\delta} $ for the approximate problem \eqref{eqs:B_Galerkin_reg}. In this section, we shall prove that $ T_{l,\delta} $ actually coincides with the final time $T$. By \eqref{eqs:uniform-B-Galerkin-higher} and the embedding $L^2(0,T_{l,\delta};W^{2,2}(\Omega)) \cap W^{1,2}(0,T_{l,\delta};L^2(\Omega)) \hookrightarrow C([0,T_{l,\delta}]; W^{1,2}(\Omega))$, we have
\begin{equation*}
    \norm{\bbb_l^\delta (t)}_{W^{1,2}}^2 \leq C,
\end{equation*}
for all $t \in [0,T_{l,\delta}]$, where $C>0$ does depend on $\delta$ but is independent of $l$. Hence, the existence time can go beyond $T_{l,\delta}$ with a continuity argument. Since the estimate \eqref{eqs:uniform-B-Galerkin} is uniform in $l,\delta$, this process can be repeated finitely many times. 
As long as $T_{l,\delta} < T$, the process continues until the final time $T$ is reached, leading to the conclusion that the maximum time interval is $T_{l,\delta} = T$.
}

\subsection{Passing to the Limit of the Galerkin Approximation}
\label{sec:limit_B_l}

Now, from the uniform bounds \eqref{eqs:uniform-B-Galerkin} and \eqref{eqs:uniform-B-Galerkin-higher}, it follows that the sequence $ \bbb_l $ has the following convergences up to a non-relabeled subsequence, as $ l \rightarrow \infty $,
\begin{alignat*}{4}
	\bbb_l^\delta & \rightarrow \bbb^\delta, \quad && \text{weakly-}* \quad && \text{in } L^\infty(0, T; W^{1,2}(\Omega; \bbr_{\mathrm{sym}}^{d \times d})); \\
	\bbb_l^\delta & \rightarrow \bbb^\delta, \quad && \text{weakly} \quad && \text{in } L^2(0, T; W^{2,2}(\Omega; \bbr_{\mathrm{sym}}^{d \times d})); \\
	\pt \bbb_l^\delta & \rightarrow \pt \bbb^\delta, \quad && \text{weakly} \quad && \text{in } L^2(0, T; L^2(\Omega; \bbr_{\mathrm{sym}}^{d \times d})).
\end{alignat*}
{It is shown in \cite[Appendix C]{BLS2017} that $\beta_\delta(\mathbb{B})$ is a Sobolev function, using that $\beta_\delta(\cdot) \colon \mathbb{R} \to \mathbb{R}$ is globally Lipschitz continuous with Lipschitz constant {equal} to $1$. Therefore, based on the weak(-$*$) convergence results for $\mathbb{B}_l^\delta$ and the Aubin-Lions lemma, one derives the limit passage in the weak formulation \eqref{eqs:B_Galerkin_reg} in a standard way.}
Then, with the higher-order bounds \eqref{eqs:uniform-B-Galerkin-higher},
we end up with the pointwise formulation
\begin{equation}
		\label{eqs:B-delta-strong-formulation}
		\pt \bbb^\delta + \sR_\eta \bv \cdot \nabla \beta_\delta(\bbb^\delta) - \beta_\delta(\bbb^\delta) \tran{\nabla} \sR_\eta \bv - \nabla \sR_\eta \bv \beta_\delta(\bbb^\delta) + (\bbb^\delta - \bbi) = \frac{\kappa}{\mu_\eta(\xi)} \Delta \bbb^\delta,
	\end{equation}
 subjected to $\bbb^\delta(0) = \delta\bbi+ \sR_\delta \bbb_0$.

{Moreover, from the uniform bounds \eqref{eqs:uniform-B-Galerkin} and the weak(-$*$) lower-semicontinuity of the norm in $L^p$ spaces and Fatou's lemma, we know
\begin{equation}
	\label{eqs:uniform-B-Galerkin-limsup}
    \begin{aligned}
        & \norm{\bbb^\delta(t)}_{L^2}^2
    	+ \int_0^t \norm{\bbb^\delta(\tau)}_{L^2}^2 \dtau
	   + \int_0^t \norm{\nabla \bbb^\delta(\tau)}_{L^2}^2 \dtau \\
        & \quad \leq \liminf_{l \to \infty} \left(\norm{\bbb_l^\delta(t)}_{L^2}^2
	   + \int_0^t \norm{\bbb_l^\delta(\tau)}_{L^2}^2 \dtau
	   + \int_0^t \norm{\nabla \bbb_l^\delta(\tau)}_{L^2}^2 \dtau 
        \right)
        \leq C
    \end{aligned}
\end{equation}
for a.e. $t \in (0,T)$, where $C > 0$ does not depend on $\delta>0$.
}

\subsection{Uniform Energy Estimate}
\label{sec:unifrom_B}
{
    As now we have a pointwise strong formulation \eqref{eqs:B-delta-strong-formulation}, by mimicking the arguments in \cite[Section 7.2]{BLS2017}, we derive by multiplying \eqref{eqs:B-delta-strong-formulation} with $ \frac{\mu_\eta(\xi)}{2} (\bbi - G_\delta'(\bbb^\delta)) $, where $G_\delta'(\bbb^\delta) = \beta_\delta^{-1}(\bbb^\delta)$, and integrating the resulting equation over $ \Omega $ and by parts, that
    \begin{align}
		\nonumber
		& \ddt \int_\Omega \frac{\mu_\eta(\xi)}{2} \tr(\bbb^\delta - G_\delta(\bbb^\delta)) \dx \\
		\nonumber
		& \quad 
		+ \int_\Omega \frac{\mu_\eta(\xi)}{2} 
        (\bbb^\delta - \bbi ) : (\bbi - G_\delta'(\bbb^\delta)) \dx 
        - \frac\kappa2 \int_\Omega \nabla \bbb^\delta : \nabla G_\delta'(\bbb^\delta) \dx
        \\
		\nonumber
		&  
		= \int_\Omega \frac{\mu_\eta'(\xi)}{2} \pt \sR_\eta \xi \, \tr(\bbb^\delta - G_\delta(\bbb^\delta)) \dx \\
        \nonumber
        & \quad
        -  \int_\Omega \frac{\mu_\eta(\xi)}{2} \sR_\eta \bv \cdot \nabla \Big(\tr(\beta_\delta (\bbb^\delta) - \ln \beta_\delta (\bbb^\delta))\Big) \dx
		+ \int_\Omega \mu_\eta(\xi) (\beta_\delta(\bbb^\delta) - \bbi) : \nabla \sR_\eta \bv \dx \\
        \label{eqs:B-reg-ddt}
        & \eqqcolon I_1 + I_2 + I_3. 
	\end{align}
    By direct computation,
    we have
    \begin{align*}
        \abs{I_1} & \leq \frac{\overline{\mu}'}{\underline{\mu}} \norm{\pt \sR_\eta \xi}_{L^\infty} \frac{\,\underline{\mu}\,}{2} \int_\Omega \tr(\bbb^\delta - G_\delta(\bbb^\delta)) \dx.
    \end{align*}
    Recalling \eqref{eqs:uniform-B-Galerkin-limsup}, it follows with H\"older's and Young's inequalities that
    \begin{align*}
        \abs{I_2} & \leq \frac{\overline{\mu}}{2} \norm{\sR_\eta \bv}_{L^2} \Big(\norm{\nabla \bbb^\delta}_{L^2} + \norm{\nabla \tr \ln \beta_\delta (\bbb^\delta)}_{L^2} \Big)\\
        & \leq C\norm{\sR_\eta \bv}_{L^2}^2 
        + \norm{\nabla \bbb^\delta}_{L^2}^2
        + \varepsilon \norm{\nabla \tr\ln \beta_\delta (\bbb^\delta)}_{L^2}^2, \\
        \abs{I_3} & \leq \overline{\mu} \norm{\nabla \sR_\eta \bv}_{L^2} \Big( 2 \norm{\bbb^\delta}_{L^2} + 2d \sqrt{\abs{\Omega}} + \sqrt{\abs{\Omega}} \Big)
        \leq C,
    \end{align*}
    where $C>0$ depends on the bounds in \eqref{eqs:uniform-B-Galerkin} and $\epsilon>0$ will be specified later.
    On noting (cf. \cite[Lemma 7.1]{BLS2017})
    \begin{equation*}
        - \int_\Omega \nabla \bbb^\delta : \nabla G_\delta'(\bbb^\delta) \dx
			\geq \frac{1}{d}\int_\Omega \abs{\nabla \tr \ln \beta_\delta(\bbb^\delta)}^2 \dx,
    \end{equation*}
    and by choosing $\varepsilon = \frac{\kappa}{4d}$, it follows from \eqref{eqs:B-reg-ddt} and from 
    \begin{align*}
    (\bbb^\delta - \bbi ) : (\bbi - G_\delta'(\bbb^\delta))
    &= \tr(\beta_\delta(\bbb^\delta)
		+ \beta_\delta^{-1}(\bbb^\delta) - 2\bbi)
    + (\bbb^\delta - \beta_\delta(\bbb^\delta)) : (\bbi-G_\delta'(\bbb^\delta))
    \\
    &\geq \tr(\beta_\delta(\bbb^\delta)
		+ \beta_\delta^{-1}(\bbb^\delta) - 2\bbi) \geq 0,
    \end{align*}
    that
    \begin{align}
		\nonumber
		& \ddt \int_\Omega \frac{\mu_\eta(\xi)}{2} \tr(\bbb^\delta - G_\delta(\bbb^\delta)) \dx \\
		\nonumber
		& \quad 
		+ \int_\Omega \frac{\mu_\eta(\xi)}{2} \tr(\beta_\delta(\bbb^\delta)
			+ \beta_\delta^{-1}(\bbb^\delta) - 2\bbi) \dx
		+ \frac{\kappa}{4d} \int_\Omega \abs{\nabla \tr \ln \beta_\delta(\bbb^\delta)}^2  \dx \\
		\label{eqs:apriori-B-reg}
		&  
		\leq \frac{\overline{\mu}'}{\underline{\mu}} \norm{\pt \sR_\eta \xi}_{L^\infty} \frac{\,\underline{\mu}\,}{2} \int_\Omega \tr(\bbb^\delta - G_\delta(\bbb^\delta)) \dx + C + \norm{\nabla \bbb^\delta}_{L^2}^2. 
	\end{align}
	Integrating \eqref{eqs:apriori-B-reg} over $ (0,t) $, $ t \in (0,T) $, and using the bounds for $\mu_\eta(\cdot)$, we obtain
	\begin{align}
		& \frac{\,\underline{\mu}\,}{2} \int_\Omega \tr(\bbb^\delta - G_\delta(\bbb^\delta))(t) \dx 
		\nonumber \\
		& \quad
		+ \int_0^t \int_\Omega \frac{\mu_\eta(\xi)}{2} \tr(\beta_\delta(\bbb^\delta)
		+ \beta_\delta^{-1}(\bbb^\delta) - 2\bbi)(\tau) \dx \dtau
		+ \frac{\kappa}{4d} \int_0^t \norm{\nabla \tr \ln \beta_\delta(\bbb^\delta)(\tau)}_{L^2}^2 \dtau 
		\nonumber \\
		& 
		\leq C \left(\int_\Omega \tr(\bbb^\delta(0) - G_\delta(\bbb^\delta(0))) \dx + t
        + \int_0^t \norm{\nabla \bbb^\delta(\tau)}_{L^2}^2 \dtau\right)
		\nonumber \\
		& \quad 
		+ \int_0^t \frac{\overline{\mu}'}{\underline{\mu}} \norm{\pt \sR_\eta \xi(\tau)}_{L^\infty} \frac{\,\underline{\mu}\,}{2} \Big(\int_\Omega \tr(\bbb^\delta - G_\delta(\bbb^\delta))(\tau) \dx\Big) \dtau.
		\label{eqs:apriori-B-reg-integral}
	\end{align}
    Then, using a Gronwall argument (cf.~\cite[Lemma 3.1]{GL2017}), leads to
    \begin{align}
		\nonumber
		& \sup_{0 \leq \tau \leq t} \int_\Omega \tr(\bbb^\delta - G_\delta(\bbb^\delta))(\tau) \dx  
        + \int_0^t \norm{\nabla \tr \ln \beta_\delta(\bbb^\delta)(\tau)}_{L^2}^2 \dtau \\
		\nonumber
		& \qquad + \int_0^t \int_\Omega \frac{\mu_\eta(\xi)}{2} \tr(\beta_\delta(\bbb^\delta)
		+ \beta_\delta^{-1}(\bbb^\delta) - 2\bbi)(\tau) \dx \dtau \\
		\nonumber
		& 
		\leq C \left(\int_\Omega \tr(\bbb^\delta(0) - G_\delta(\bbb^\delta(0))) \dx + t
        + \int_0^t \norm{\nabla \bbb^\delta(\tau)}_{L^2}^2 \dtau\right) \\
		\label{eqs:apriori-B-reg-gronwall}
		& \qquad \times \Big(
        1 + C\norm{\pt \sR_\eta \xi}_{L_t^1 L_x^\infty}e^{C\norm{\pt \sR_\eta \xi}_{L_t^1 L_x^\infty}}\Big)
        \leq C,
	\end{align}
    for all $t \in [0,T]$, where $C>0$ depends on the bounds in \eqref{eqs:uniform-B-Galerkin-limsup}, $\normm{\tr \ln \bbb_0}_{L^1}$ and is independent of $\delta > 0$.}

\subsection{Passing to the Limit of the Energy Regularization}
\label{sec:limit_B_delta}
From the weak formulation of \eqref{eqs:B-delta-strong-formulation} and the estimate \eqref{eqs:uniform-B-Galerkin-limsup}, one directly derives, up to a non-relabeled subsequence, as $ \delta \rightarrow 0 $, that
\begin{alignat*}{4}
	\bbb^\delta & \rightarrow \bbb, \quad && \text{weakly-}* \quad && \text{in } L^\infty(0, T; L^2(\Omega; \bbr_{\mathrm{sym}}^{d \times d})); \\
	\bbb^\delta & \rightarrow \bbb, \quad && \text{weakly} \quad && \text{in } L^2(0, T; W^{1,2}(\Omega; \bbr_{\mathrm{sym}}^{d \times d})); \\
	\pt \bbb^\delta & \rightarrow \pt \bbb, \quad && \text{weakly} \quad && \text{in } L^2(0, T; [W^{1,2}(\Omega; \bbr_{\mathrm{sym}}^{d \times d})]').
\end{alignat*}
In view of the Aubin--Lions lemma and the Sobolev embedding {$W^{1,2}(\Omega)\hookrightarrow L^q(\Omega)$ compactly for $q<\frac{2d}{d-2}$}, we have
\begin{alignat*}{4}
	\bbb^\delta & \rightarrow \bbb, \quad && \text{strongly} \quad && \text{in } {L^2(0, T; L^q(\Omega; \bbr_{\mathrm{sym}}^{d \times d})),}
\end{alignat*}
where the limit tensor $ \bbb $ is real symmetric as $ \bbb^\delta $ is real symmetric for all $ \delta > 0 $.
{As a consequence, one infers
\begin{alignat*}{4}
	\bbb^\delta & \rightarrow \bbb, \quad && \text{a.e.} \quad && \text{in } Q_T,
\end{alignat*}
which, using that $\beta_\delta(\cdot)$ is Lipschitz continuous with Lipschitz constant {equal} to $1$, implies that
\begin{alignat}{4}
    \label{eqs:convergence-betadelta-to-b-pointwise}
	\beta_\delta(\bbb^\delta) & \rightarrow [\bbb]_+, \quad && \text{a.e.} \quad && \text{in } Q_T,
\end{alignat}
where $[\cdot]_+ \colon s\mapsto \max\{s, 0\}$, $s\in\bbr$, is the positive part function. In \cite[Appendix C]{BLS2017} it is shown that $\beta_\delta(\mathbb{G}) \in W^{1,2}(\Omega;\bbr^{d\times d}_\mathrm{sym})$ with $\norm{\nabla\beta_\delta(\mathbb{G})}_{L^2} \leq \norm{\nabla\mathbb{G}}_{L^2}$ for $\mathbb{G}\in W^{1,2}(\Omega;\bbr^{d\times d}_\mathrm{sym})$. By this and the uniform in $\delta$ boundedness of $\{\bbb^\delta\}_{\delta>0}$ in $L^2(0,T;W^{1,q}(\Omega;\bbr^{d\times d}_\mathrm{sym}))$, $q\in[2,\frac{2d}{d-2})$, we find $\{\beta_\delta(\bbb^\delta)\}_{\delta > 0}$ is uniformly bounded in $L^2(0,T;W^{1,q}(\Omega;\bbr^{d\times d}_\mathrm{sym}))$.
Recalling Vitali's theorem, we further exploit
\begin{alignat}{4} 
    \label{eqs:convergence-betadelta-to-b}
	\beta_\delta(\bbb^\delta) & \rightarrow [\bbb]_+, \quad && \text{strongly} \quad && \text{in } {L^2(0, T; W^{1,q}(\Omega; \bbr_{\mathrm{sym}}^{d \times d}))}.
\end{alignat}
Therefore, we can get \eqref{eqs:weak-B-formulation} by taking the limit $\delta\to0$ in the weak formulation of \eqref{eqs:B-delta-strong-formulation} with standard arguments that are based on the convergence results above for $\mathbb{B}^\delta$ and $\beta_\delta(\mathbb{B}^\delta)$.
}

\subsection{Positive Definiteness} 
\label{sec:B-positive-delta}
To make sure that the weak formulation \eqref{eqs:B_Galerkin} is recovered from \eqref{eqs:B_Galerkin_reg} in the limit $\delta \to 0$, 
we mimic the arguments from \cite[Section 8.2]{BLS2017} and \cite[Lemma 4.1]{BLL2022} to prove the positive definiteness of the limit function $ \bbb $. 
{Since $\bbb^\delta$ is positive definite a.e.~in $\Omega$ for $\delta>0$, the pointwise limit $\bbb$ is positive semi-definite a.e.~in $\Omega$. In the following lemma, we use the uniform estimate \eqref{eqs:apriori-B-reg-gronwall} to show that the limit function $\bbb$ is in fact positive definite.}
\begin{lemma}
	\label{lem:positive_B}
	Let $ \bbb $ be the limit function of the sequence of positive definite solutions $ \bbb^\delta $. Then $ \bbb $ is positive definite a.e.~in $ Q_T $.
\end{lemma}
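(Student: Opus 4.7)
\bigskip

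My plan is to argue by contradiction using Fatou's lemma applied to the regularized entropy $\tr(\bbb^\delta - G_\delta(\bbb^\delta))$, exploiting the fact that $G_\delta$ penalizes eigenvalues near or below zero by an amount that diverges as $\delta \to 0$. The uniform bound \eqref{Eqs:apriori-B-reg-gronwall} then forces the limit $\bbb$ to avoid nonpositive eigenvalues on a set of positive measure.

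First, I would pass to a further (non-relabeled) subsequence so that $\bbb^\delta \to \bbb$ pointwise a.e.~in $Q_T$, which is available since the strong convergence $\bbb^\delta \to \bbb$ in $L^2(0,T; L^6(\Omega;\bbr_{\mathrm{sym}}^{d\times d}))$ has already been established in Section \ref{sec:limit_B_delta}. Because the ordered eigenvalues $\lambda_1 \leq \cdots \leq \lambda_d$ of a symmetric matrix depend continuously on its entries, this yields $\lambda_j(\bbb^\delta(x,t)) \to \lambda_j(\bbb(x,t))$ for every $j$ and a.e.~$(x,t) \in Q_T$. Now suppose, for contradiction, that the measurable set
\begin{equation*}
    E \coloneqq \{(x,t) \in Q_T : \lambda_1(\bbb(x,t)) \leq 0\}
\end{equation*}
has positive Lebesgue measure. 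On $E$, at least one eigenvalue of $\bbb^\delta$ converges to a nonpositive limit, so for each $(x,t)$ in the full-measure subset where convergence holds, there exists $\delta_0(x,t) > 0$ such that $\lambda_1(\bbb^\delta(x,t)) < \delta$ for all $\delta \leq \delta_0(x,t)$. For such eigenvalues the explicit formula for $G_\delta$ gives
\begin{equation*}
    \lambda_1(\bbb^\delta) - G_\delta(\lambda_1(\bbb^\delta))
    = \lambda_1(\bbb^\delta)\Big(1 - \frac{1}{\delta}\Big) + 1 - \ln\delta \longrightarrow +\infty \text{ as } \delta \to 0,
\end{equation*}
since $\lambda_1(\bbb^\delta) \to \lambda_1(\bbb) \leq 0$ and $1 - 1/\delta < 0$ makes the first summand nonnegative, while $-\ln\delta \to +\infty$.

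Combined with the nonnegativity of each eigenvalue contribution $\lambda_j - G_\delta(\lambda_j) \geq 0$ (from the lemma in Section \ref{sec:entropy-reg}), this yields $\tr(\bbb^\delta - G_\delta(\bbb^\delta))(x,t) \to +\infty$ a.e.~in $E$. An application of Fatou's lemma then gives
\begin{equation*}
    (+\infty)\cdot\abs{E}
    = \int_E \liminf_{\delta \to 0} \tr(\bbb^\delta - G_\delta(\bbb^\delta)) \dxdt
    \leq \liminf_{\delta \to 0} \int_0^T \norm{\tr(\bbb^\delta - G_\delta(\bbb^\delta))(\tau)}_{L^1} \dtau
    \leq C,
\end{equation*}
where the final bound comes from integrating \eqref{Eqs:apriori-B-reg-gronwall} in time. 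This is a contradiction unless $\abs{E} = 0$, so all eigenvalues of $\bbb$ are strictly positive a.e.~in $Q_T$, i.e.~$\bbb$ is positive definite a.e. The only delicate point is verifying the a.e.~eigenvalue convergence from a.e.~matrix convergence, which is routine via continuity of the spectrum; everything else reduces to the explicit divergence rate $1 - \ln\delta$ provided by the construction of $G_\delta$.
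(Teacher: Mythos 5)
Your argument is a genuinely different route from the paper's. The paper argues by contradiction via a Cauchy--Schwarz estimate: if $[\bbb]_+\bw = 0$ on a set $D$ of positive measure with $|\bw| = 1$, then $|D| = \int |\bw^\top\beta_\delta^{-1/2}(\bbb^\delta)\beta_\delta^{1/2}(\bbb^\delta)\bw|$ is bounded by $\|\beta_\delta^{-1}(\bbb^\delta)\|_{L^1}^{1/2}\|\bw^\top\beta_\delta(\bbb^\delta)\bw\|_{L^1}^{1/2}$; the first factor is uniformly bounded by the $\tr(\beta_\delta + \beta_\delta^{-1} - 2\bbi)$ term in \eqref{Eqs:apriori-B-reg-gronwall}, and the second vanishes by strong convergence of $\beta_\delta(\bbb^\delta) \to [\bbb]_+$. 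You instead exploit a different part of the same uniform bound -- the $\esssup_t\|\tr(\bbb^\delta - G_\delta(\bbb^\delta))(t)\|_{L^1}$ term -- together with pointwise eigenvalue convergence and Fatou's lemma. Both routes are legitimate: the paper's avoids pointwise eigenvalue discussions entirely, while yours is arguably more transparent about \emph{why} the entropy prevents eigenvalue collapse.

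That said, there is a small slip in your argument that you should patch. You claim that for a.e.~$(x,t) \in E$ there is $\delta_0(x,t) > 0$ with $\lambda_1(\bbb^\delta(x,t)) < \delta$ for all $\delta \leq \delta_0(x,t)$, and then appeal only to the affine branch $G_\delta(s) = s/\delta + \ln\delta - 1$. This is valid when $\lambda_1(\bbb(x,t)) < 0$, but fails on the boundary set $\{\lambda_1(\bbb) = 0\}$: there one may have $\lambda_1(\bbb^\delta) = \sqrt{\delta} > \delta$, say, so $G_\delta$ acts through its logarithmic branch. The conclusion $\lambda_1(\bbb^\delta) - G_\delta(\lambda_1(\bbb^\delta)) \to +\infty$ still holds in that regime, since $s - \ln s \to +\infty$ as $s \to 0^+$, but you must handle both branches. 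A clean formulation is: for any sequence $s_\delta \to c \leq 0$ with $s_\delta$ real and $\delta \to 0$, one has $s_\delta - G_\delta(s_\delta) \to +\infty$, checked separately according to whether $s_\delta \geq \delta$ (then $s_\delta - \ln s_\delta \to +\infty$ since $s_\delta \to 0^+$ on that subsequence) or $s_\delta < \delta$ (then $s_\delta(1-1/\delta) \geq -1$ and $1 - \ln\delta \to +\infty$). Once this dichotomy is inserted, your Fatou argument closes correctly, using $\tr(\bba - G_\delta(\bba)) \geq d$ from the entropy lemma to ensure nonnegativity of the integrand.
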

\begin{proof}
	Assume that $ \bbb $ is not positive definite a.e.~in $Q_T$. Then, there exists a subset $ D \subset Q_T $ with nonzero measure and a vector-valued function $ \bw \in L^\infty(Q_T; \bbr^d) $ satisfying $ \abs{\bw}= 1 $ a.e.~in $ D $ and $ \bw = 0 $ a.e.~in $ Q_T \setminus D $ such that
	\begin{equation}
		\label{eqs:contracdiction_Bw}
		[\bbb]_+ \bw = 0, \text{ a.e.~in } Q_T.
	\end{equation}
{On noting the Cauchy--Schwarz inequality, H\"older's inequality, \eqref{eqs:apriori-B-reg-gronwall} and the fact that $|\mathbf{w}| = 1$ a.e.~in $D$, we calculate 
\begin{align*}
\abs{D} &= \int_0^T \int_{\Omega} \abs{\bw}^2 \dx \, \d t
\\
&= \int_0^T \int_\Omega | \mathbf{w}^\top \beta_\delta^{-\frac12}(\mathbb{B}^\delta) \beta_\delta^{\frac12}(\mathbb{B}^\delta) \mathbf{w} | \dx \, \d t
\\
&\leq \Big( \int_0^T \int_\Omega | \mathbf{w}^\top \beta_\delta^{-1}(\mathbb{B}^\delta) \mathbf{w} | \dx \, \d t \Big)^{\frac12}
\Big( \int_0^T \int_\Omega | \mathbf{w}^\top \beta_\delta(\mathbb{B}^\delta) \mathbf{w} | \dx \, \d t \Big)^{\frac12}
\\
&\leq \Big( \int_0^T \int_\Omega | \beta_\delta^{-1}(\mathbb{B}^\delta)| \, |\mathbf{w}|^2 \dx \, \d t \Big)^{\frac12}
\Big( \int_0^T \int_\Omega | \mathbf{w}^\top \beta_\delta(\mathbb{B}^\delta) \mathbf{w} | \dx \, \d t \Big)^{\frac12}
\\
&\leq C \Big( \int_0^T \int_\Omega | \mathbf{w}^\top \beta_\delta(\mathbb{B}^\delta) \mathbf{w} | \dx \, \d t \Big)^{\frac12},
\end{align*}
}
where $ C $ does not depend on $ \delta $ due to the uniform estimate \eqref{eqs:apriori-B-reg-gronwall}. {Using \eqref{eqs:convergence-betadelta-to-b} and \eqref{eqs:contracdiction_Bw}, we obtain in the limit $\delta\to 0$, that $ \abs{D} = 0 $, which is a contradiction to the assumption of a nonzero measure of $ D $.} Hence, $ \bbb $ is positive definite a.e.~in $ Q_T $.
\end{proof}
On account of Lemma \ref{lem:positive_B} and the uniform bounds of $\tr \ln \beta_\delta(\bbb^\delta)$, we can extract a converging subsequence of $\{ \nabla \tr \ln \beta_\delta(\bbb^\delta)\}_{\delta>0}$ in the space $L^2(0,T;L^2(\Omega;\bbr^d))$ and a limit function $ \overline{\nabla \tr \ln \bbb} \in L^2(0,T;L^2(\Omega;\bbr^d))$ such that, as $\delta \to 0$, 
\begin{equation}
    \label{eqs:trlnBN-weak-convergence}
		{ \nabla \tr \ln \beta_\delta(\bbb^\delta) \rightarrow \overline{\nabla \tr \ln \bbb}, \quad \text{weakly} \quad \text{in } L^2(0, T; L^2(\Omega; \bbr^d)),}
\end{equation}
{ Next, we can identify $\overline{\nabla \tr \ln \bbb}$ with $\nabla \tr \ln \bbb$.
Note that the energy estimates \eqref{eqs:apriori-B-reg-gronwall} and the definitions of $G_\delta(\cdot)$ and $\beta_\delta(\cdot)$ imply the uniform boundedness of $\tr \ln \beta_\delta(\bbb^\delta)$ in $L^\infty(0,T;L^1(\Omega))$. 
Hence, with the Poincaré--Wirtinger inequality, Lemma \ref{lem:positive_B}, the continuity of the logarithm, the pointwise convergence result \eqref{eqs:convergence-betadelta-to-b-pointwise} and the positive definiteness of $\bbb$ (which implies that $[\bbb]_+ = \bbb$), we obtain for a non-relabeled subsequence, as $\delta\to 0$,}
\begin{align*}
    \tr\ln\beta_\delta(\bbb^\delta) \to \tr\ln \bbb, \quad \text{weakly} \quad \text{in } L^2(0,T; L^2(\Omega)),
\end{align*}
{and thanks to \eqref{eqs:trlnBN-weak-convergence} also weakly in $L^2(0,T; W^{1,2}(\Omega))$.
}

\subsection{Completing the Proof of Theorem \ref{thm:B-equation}}
\label{sec:B-proof}
With all the arguments from above, we conclude the existence of weak solutions in Theorem \ref{thm:B-equation} by verifying Definition \ref{def:weak-B}. Noticing that we already have
\begin{equation*}
	\bbb \in L^\infty(0,T; L^2(\Omega; \bbr_{\mathrm{sym}}^{d \times d})) \cap W^{1,2}(0, T; [W^{1,2}(\Omega; \bbr_{\mathrm{sym}}^{d \times d})]')
\end{equation*}
and the Sobolev embeddings 
\begin{gather*}
	W^{1,2}(0,T; X) \hookrightarrow C([0,T]; X), \  \text{ where }X \text{ is a Banach space}, \\
	W^{1,2}(\Omega; \bbr_{\mathrm{sym}}^{d \times d}) \hookrightarrow L^2(\Omega; \bbr_{\mathrm{sym}}^{d \times d}) \hookrightarrow [W^{1,2}(\Omega; \bbr_{\mathrm{sym}}^{d \times d})]' \text{ densely },
\end{gather*}
one infers from Lemma \ref{lem:C_w} that 
\begin{equation*}
	\bbb \in C_w([0,T); L^2(\Omega; \bbr_{\mathrm{sym}}^{d \times d})).
\end{equation*}
Combining all the convergence results from above, we obtain the weak formulation \eqref{eqs:weak-B-formulation}. The {inequality} \eqref{eqs:weak-B-energyestimate} follows immediately from the uniform estimates \eqref{eqs:uniform-B-Galerkin-limsup} and \eqref{eqs:apriori-B-reg-gronwall} using the weak-($*$) lower semicontinuity of norms and Fatou's lemma. This proves the existence.

{In the final step, we provide the main arguments for the uniqueness and continuous dependence of weak solutions to \eqref{eqs:B-equation} on the data $(\bv, \xi, \bbb_0)$. In fact, one only needs to prove the continuous dependence, and, as a consequence, we then have the uniqueness.
The continuous dependence easily follows since the equation is linear in $\bbb$. Hence, we only need to control the differences in regard to $\bv, \xi$ and coefficients, which are quite standard. 
Therefore, we omit the details and present the final inequality. Let $ \bbb_i $, $ i = 1,2 $ be two weak solutions of \eqref{eqs:B-equation} in the sense of Definition \ref{def:weak-B} with data $ (\bv_i, \xi_i, \bbb_0^i) $, respectively, satisfying \eqref{eqs:B-v-phi} and let $ \overline{\bbb} = \bbb_1 - \bbb_2 $ and $ (\overline{\bv}, \overline{\xi}) = (\bv_1 - \bv_2, \xi_1 - \xi_2) $.  Then, it holds
	\begin{align}
		\label{eqs:B-uniqueness-estimate}
		& \norm{\overline{\bbb}(t)}_{L^2}^2
			+ \int_0^t \norm{\overline{\bbb}(\tau)}_{L^2}^2 \dtau
			+ \int_0^t \norm{\nabla \overline{\bbb}}_{L^2}^2 \dtau \\
   \nonumber
			& \leq C(\eta) \Big( \norm{\overline{\bv}}_{L_t^2 L_x^2}^2 + \norm{\nabla \overline{\bv}}_{L_t^2 L_x^2}^2
			+ \norm{\overline{\xi}}_{L_t^\infty L_x^2}^2 + \norm{\nabla \overline{\xi}}_{L_t^2 L_x^2}^2
			+ \norm{\overline{\bbb}_0}_{L^2}^2 \Big)(1 + Cg(t)e^{Cg(t)}),
	\end{align}
	where $ C(\eta,T) $ depends on $ \normm{\nabla \bbb_2}_{L_t^2 L_x^2} $, $T$, $ \eta $, $ \kappa $ and the upper-lower bounds of the coefficients, the function $ g(t) \coloneqq \int_0^t (\normm{\nabla \sR_\eta \bv_1(\tau)}_{L^\infty} + \normm{\nabla \sR_\eta \xi_1(\tau)}_{L^\infty}^2 + \normm{\bbb_2(\tau)}_{L^2}^2+ \normm{\nabla \bbb_2(\tau)}_{L^2}^2) \dtau $ is finite for $ t \in (0,T) $ thanks to the existence of weak solutions and mollifiers.}
	
This completes the proof of Theorem \ref{thm:B-equation}. \qed

\begin{remark}
    For suitably smooth solutions, the positive definiteness of the {tensor} $ \bbb $ can be preserved for all $ t > 0 $ if $ \bbb $ is positive definite initially. This has been observed for the 2D Oldroyd-B equation with stress diffusion \cite[Proposition 1]{CK2012} and for other viscoelastic fluid systems in, e.g., \cite[Lemma 2.1]{HL2007}, \cite[Remark 3.4]{LMNR2017}. 
    We remark here that one could obtain a similar result as in \cite[Proposition 1]{CK2012} for $d \in \{2,3\}$, by means of our argument above, provided with a stronger initial data $\bbb_0 \in W^{1,2}(\Omega; \bbb_{\mathrm{sym}}^{d \times d})$ that is positive definite a.e.~in $\Omega$ and $\tr \ln \bbb_0 \in L^1(\Omega)$. 
\end{remark}

\section{Regularized System}
\label{sec:regularized}
As discussed before, we are going to introduce a regularization to \eqref{eqs:Model}, so that one can easily obtain good uniform estimates and pass to the limit in the regularization parameter with enough compactness. In view of the regularization operator $ \sR_\eta $ defined in \eqref{eqs:mollification} for $ \eta > 0 $, we define $ \mu_\eta(\phi) \coloneqq \mu(\sR_\eta \phi) $. The aim of this section is therefore to obtain a weak solution to the following regularized system: 
\begin{subequations}
	\label{eqs:Model_reg}
	\begin{alignat}{3}
		\label{eqs:fluid_reg-momentum}
		\begin{split}
			\pt (\rho(\phi) \bu) + \bu \cdot \nabla(\rho(\phi) \bu) & - \rho'(\phi) \Div \big( \bu \otimes m(\phi) \nabla q \big) + \nabla \pi \\
			- \Div \big( \bbs_\eta(\nabla \bu, \bbb, \phi) \big) & = q \nabla \phi + \sR_\eta \Big[\frac{\mu_\eta(\phi)}{2} \nabla \tr(\bbb - \ln \bbb - \bbi)\Big]
		\end{split} && \tin Q_T, \\
		\Div \bu & = 0 && \tin Q_T, \\
		\label{eqs:fluid_reg-B}
		\pt \bbb + \sR_\eta \bu \cdot \nabla \bbb + (\bbb - \bbi) & = \bbb \tran{\nabla} \sR_\eta \bu + \nabla \sR_\eta \bu \bbb + \frac{\kappa}{\mu_\eta(\phi)} \Delta \bbb && \tin Q_T, \\
		\label{eqs:CH-phi_reg}
		\pt \phi + \bu \cdot \nabla \phi & = \Div (m(\phi) \nabla q) && \tin Q_T, \\
		\label{eqs:CH-q_reg}
		q - W'(\phi) + \Delta \phi
		- \eta \ptial{t} \phi
		& = \sR_\eta \Big[\frac{\mu_\eta'(\phi)}{2} \tr(\bbb - \ln \bbb - \bbi)\Big] && \tin Q_T, \\
		\bu = \mathbf{0},\ \ptial{\bn} \bbb & = {\mathbb{O}} && \ton S_T, \\ 
		\ptial{\bn} \phi = \ptial{\bn} q & = 0  && \ton S_T, \\
		(\bu, \bbb, \phi)(0) & = (\bu_0, \bbb_0, \phi_0) && \tin \Omega,
	\end{alignat}
\end{subequations}
where
\begin{equation*}
	\bbs_\eta(\nabla \bu, \bbb, \phi) = \nu(\phi)(\nabla \bu + \tran{\nabla} \bu) + \sR_\eta \big[\mu_\eta(\phi)(\bbb - \bbi)\big].
\end{equation*}

\subsection{Formal a Priori Estimates and Weak Solutions}
\label{sec:a-priori-reg}
{
In this section, we start with the formal energy estimates uniform in the regularization parameter $ \eta > 0 $.
Note that this energy is not necessarily finite if $ \bbb $ is not positive definite, which is due to the logarithmic term. In Section \ref{sec:Two-layer-approximation} we overcome it by means of an energy regularization to the logarithmic function. 
{Now we discuss formal \textit{a priori} estimates} for \eqref{eqs:Model_reg}. Testing \eqref{eqs:fluid_reg-momentum} with $ \bu $, we have
\begin{equation}
	\label{eqs:formal-u_reg}
	\begin{aligned}
		\frac{\d}{\d t} & \int_\Omega \frac{\rho(\phi)}{2} \abs{\bu}^2 \dx
		+ \int_\Omega \Big( \frac{\nu(\phi)}{2} \abs{\nabla \bu + \tran{\nabla} \bu}^2
			+ \sR_\eta \big[\mu_\eta(\phi) \big( \bbb - \bbi \big)\big] : \nabla \bu \Big) \dx
		\\
		&  
		+ \int_\Omega \Big( \underbrace{(\pt + \bu \cdot \nabla) \rho(\phi) - \Div \big( m(\phi) \nabla q \big) \rho'(\phi)}_{= 0 \text{ by } \eqref{eqs:CH-phi_reg}} \Big) \frac{\abs{\bu}^2}{2} \dx \\
		& = \int_\Omega q \nabla \phi \cdot \bu \dx
        + \int_\Omega \sR_\eta \Big[\frac{\mu_\eta(\phi)}{2} \nabla \tr(\bbb - \ln \bbb - \bbi)\Big] \cdot \bu \dx,
	\end{aligned}
\end{equation} 
where we used the identity for a first-order differential operator $ \partial \cdot $ that
\begin{align*}
	{ \partial (\rho \bu) \cdot \bu 
	= \partial \rho \abs{\bu}^2 + \rho \partial \bu \cdot \bu 
	= \partial \Big(\rho \frac{\abs{\bu}^2}{2}\Big) + \partial \rho \frac{\abs{\bu}^2}{2}. }
\end{align*}
Multiplying \eqref{eqs:fluid_reg-B} by $ \frac{\mu_\eta(\phi)}{2} (\bbi - \inv{\bbb}) $ and integrating over $ \Omega $, one obtains
\begin{align}
	\ddt & \int_\Omega \frac{\mu_\eta(\phi)}{2} \tr(\bbb - \ln \bbb - \bbi) \dx 
	\nonumber \\
	& \qquad + \int_\Omega \frac{\mu_\eta(\phi)}{2} \tr(\bbb
	+ \inv{\bbb} - 2\bbi) \dx 
	+ \frac{\kappa}{2d} \int_\Omega \abs{\nabla \tr \ln \bbb}^2 \dx 
	\label{eqs:formal-B_reg} \\
	& \quad
	\leq \int_\Omega \frac{\mu_\eta'(\phi)}{2} \pt \sR_\eta \phi \, \tr(\bbb - \ln \bbb - \bbi) \dx
    - \int_\Omega \frac12 \mu_\eta(\phi) \sR_\eta \bu \cdot \nabla \tr(\bbb - \ln \bbb - \bbi) \dx 
    \nonumber
    \\ 
    &\quad
	+ \int_\Omega \mu_\eta(\phi) (\bbb - \bbi) : \nabla \sR_\eta \bu \dx, 
	\nonumber
\end{align}
where we employed \eqref{eqs:partial-B-entropy}, \eqref{eqs:symmetric-product} and 
\begin{equation*}
	- \int_\Omega \nabla \bbb : \nabla \inv{\bbb} \dx \geq \frac1d \int_\Omega \abs{\nabla \tr \ln \bbb}^2 \dx,
\end{equation*}
which is referred to \cite[Lemma 3.1]{BLS2017} with $ d = 2,3 $. Testing \eqref{eqs:CH-phi_reg} with $ q $, \eqref{eqs:CH-q_reg} with $ - \pt \phi $, integrating and adding both equations and integrating by parts over $\Omega$ yield
\begin{equation}
	\label{eqs:formal-CH_reg}
	\begin{aligned}
		\ddt \int_\Omega \Big(\frac{1}{2} \abs{\nabla \phi}^2 & + W(\phi)\Big) \dx
		+ \int_\Omega \bu \cdot \nabla \phi q \dx
		+ \eta \int_\Omega \abs{\ptial{t} \phi}^2 \dx
		\\
		& \qquad   
		+ \int_\Omega \frac{\mu_\eta'(\phi)}{2} \pt \sR_\eta \phi \tr(\bbb - \ln \bbb - \bbi) \dx
		+ \int_\Omega m(\phi) \abs{\nabla q}^2 \dx = 0.
	\end{aligned}
\end{equation}
Summing \eqref{eqs:formal-u_reg}, \eqref{eqs:formal-B_reg} and \eqref{eqs:formal-CH_reg} together gives rise to
\begin{equation}
	\label{eqs:dt-formal_reg}
	\begin{aligned}
		\ddt \cE_\eta(t) 
    		+ \int_\Omega \Big( \frac{\nu(\phi)}{2} & \abs{\nabla \bu + \tran{\nabla} \bu}^2 
    		+ m(\phi) \abs{\nabla q}^2 
    		+ \eta \abs{\ptial{t} \phi}^2
		\Big) \dx \\
		&  
		+ \int_\Omega \frac{\mu_\eta(\phi)}{2} \tr(\bbb
		+ \inv{\bbb} - 2\bbi) \dx
		+ \frac{\kappa}{2d} \int_\Omega \abs{\nabla \tr \ln \bbb}^2 \dx
		\leq 0,
	\end{aligned}
\end{equation}
with the help of \eqref{eqs:mollification-commute}, where 
the mixed terms are canceled after the summation, {and the total energy $ \cE_\eta(t) = \cE_\eta(\bu,\phi,\bbb)(t) $ is defined by
\begin{equation}
	\label{eqs:energy-reg}
	\cE_\eta(\bu,\phi,\bbb) = \int_\Omega \frac{\rho(\phi)}{2} \abs{\bu}^2 \dx
	+ \int_\Omega \frac{\mu_\eta(\phi)}{2} \tr(\bbb - \ln \bbb - \bbi) \dx + \int_\Omega \frac{1}{2} \abs{\nabla \phi}^2 + W(\phi) \dx.
\end{equation}}
Then, integrating \eqref{eqs:dt-formal_reg} over $ t \in (0, \tau) $, $ \tau \in (0,T) $ and using the upper and lower bounds of $ \nu, \mu, m $, one finally obtains the \textit{a priori} estimate
\begin{equation}
    \label{eqs:formalEstimate_reg}
    \begin{aligned}
        \cE_\eta(\tau) & + \int_0^\tau \norm{\nabla \bu(t)}_{L^2}^2 \dt
	    + \int_0^\tau \norm{\nabla q(t)}_{L^2}^2 \dt 
	    + \eta \int_0^\tau \norm{\ptial{t} \phi(t)}_{L^2}^2 \dt
	    \\
	    & 
	    + \int_0^\tau \norm{\tr(\bbb + \inv{\bbb} - 2\bbi)(t)}_{L^1} \dt
	    + \int_0^\tau \norm{\nabla \tr \ln \bbb(t)}_{L^2}^2 \dt
	    \leq C \cE_\eta^0,
    \end{aligned}
\end{equation}
for a.e.~$ \tau \in (0,T) $ {and $ \cE_\eta^0 \coloneqq \cE_\eta(\bu_0,\phi_0,\bbb_0) $}. 
Here, the positive constant $ C $ depends on the upper and lower bounds of $ \nu, \mu, m $, and is independent of $ \eta > 0 $.
}

Let us continue with the definition of weak solutions to the system \eqref{eqs:Model_reg}. 
\begin{definition}
	\label{def:model_reg}
	Let Assumption \ref{ass:weak} hold with $\Omega\in\bbr^d$, $ d \in \{2,3\} $, $ T > 0 $, and $ (\bu_0, \bbb_0, \phi_0) \in L_\sigma^2(\Omega) \times L^2(\Omega; \bbr_{\mathrm{sym}}^{d \times d}) \times W^{1,2}(\Omega) $ with $ \bbb_0 $ positive definite a.e.~in $\Omega$, $\tr\ln\bbb_0 \in L^1(\Omega)$ and $ \abs{\phi_0} \leq 1 $ a.e.~in $ \Omega $.
	We call the quadruple $ (\bu, \phi, q, \bbb) $ a \textit{finite energy} weak solution to \eqref{eqs:Model_reg} with initial data $ (\bu_0, \bbb_0, \phi_0) $, provided that
	\begin{enumerate}
		\item the quadruple $ (\bu, \phi, q, \bbb) $ satisfies
		\begin{gather*}
			\bu \in C_w([0,T]; L_\sigma^2(\Omega)) 
			\cap L^2(0,T; W_0^{1,2}(\Omega; \bbr^d)); \\
			\phi \in C_w([0,T]; W^{1,2}(\Omega)) 
			\cap L^2(0,T; W^{2,2}(\Omega)) \text{ with } \phi\in(-1,1) \text{ a.e.~in } Q_T; \\
			W'(\phi) \in L^2(0,T; L^2(\Omega)), \quad
			q \in L^2(0,T; W^{1,2}(\Omega)); \\
			\bbb \text{ is symmetric positive definite a.e.~in } Q_T; \\
			\bbb \in C_w([0,T]; L^2(\Omega; \bbr_{\mathrm{sym}}^{d \times d})) 
			\cap L^2(0,T; W^{1,2}(\Omega; \bbr_{\mathrm{sym}}^{d \times d}));\\
			\tr \ln \bbb \in L^\infty(0,T; L^1(\Omega)) 
			\cap L^2(0,T; W^{1,2}(\Omega)); 
			\end{gather*}
		\item for all $ t \in (0,T) $ and all $ \bw \in C^\infty([0,T]; C_0^\infty(\Omega; \bbr^d)) $ with $ \Div \bw = 0 $, we have
		\begin{align}
			\int_0^t \int_\Omega & \Big( \rho(\phi) \bu \cdot \pt \bw 
			+ (\rho(\phi) \bu \otimes \bu) : \nabla \bw 
			- \rho'(\phi) (\bu \otimes m(\phi) \nabla q) : \nabla \bw\Big) \dx \d\tau 
        \nonumber\\
			& \quad - \int_0^t \int_\Omega \Big( 
			\nu(\phi) (\nabla \bu + \tran{\nabla} \bu) : \nabla \bw 
			+ \sR_\eta \big[\mu_\eta(\phi)(\bbb - \bbi)\big] : \nabla \bw
			\Big) \dx \d\tau
			\label{eqs:weak-reg-u-formulation} \\
			& = - \int_0^t \int_\Omega q \nabla \phi \cdot \bw \dx \d\tau
			- \int_0^t \int_\Omega \sR_\eta \Big[\frac{\mu_\eta(\phi)}{2} \nabla \tr(\bbb - \ln \bbb - \bbi)\Big] \cdot \bw \dx \d\tau \nonumber \\
			& \quad + \int_\Omega \rho(\phi(\cdot, t)) \bu(\cdot, t) \cdot \bw(\cdot, t) \dx
			- \int_\Omega \rho(\phi_0) \bu_0 \cdot \bw(\cdot, 0) \dx; \nonumber
		\end{align}
		\item for all $ t \in (0,T) $ and all $ \xi \in C^\infty([0,T]; C^1(\overline{\Omega})) $, we have
		\begin{align}
			\int_0^t \int_\Omega & \phi \big(  \pt \xi 
			+ \bu \cdot \nabla \xi \big) \dx \d\tau 
				- \int_0^t \int_\Omega m(\phi) \nabla q \cdot \nabla \xi \dx \d\tau 
			\label{eqs:weak-reg-phi-formulation} \\
			& = \int_\Omega \phi(\cdot, t) \xi(\cdot, t) \dx
			- \int_\Omega \phi_0 \xi(\cdot, 0) \dx; \nonumber
		\end{align}
		\item for a.e.~$ (x, t) \in Q_T $, we have
		\begin{equation}
		    \label{eqs:weak-reg-q-formulation}
			q = W'(\phi) - \Delta \phi
			+ \eta \ptial{t} \phi
			+ \sR_\eta \Big[\frac{\mu_\eta'(\phi)}{2} \tr(\bbb - \ln \bbb - \bbi)\Big];
		\end{equation}
		\item for all $ t \in (0,T) $ and all $ \bbc \in C^\infty(\overline{Q_T}; \bbr_{\mathrm{sym}}^{d \times d}) $, we have
		\begin{align}
			\int_0^t \int_\Omega & \Big( \bbb : \pt \bbc 
				{ - (\sR_\eta \bu \cdot \nabla) \bbb :  \bbc} \Big) \dx \d\tau \nonumber\\
			& \quad + \int_0^t \int_\Omega \Big(\big(\nabla \sR_\eta \bu \bbb + \bbb \tran{\nabla} \sR_\eta \bu \big) : \bbc 
				- \kappa \nabla \bbb : \nabla \frac{\bbc}{\mu_\eta(\phi)} \Big) \dx \d\tau 
			\label{eqs:weak-reg-B-formulation} \\
			& = \int_0^t \int_\Omega (\bbb : \bbc - \tr \bbc) \dx \d\tau
				+ \int_\Omega \bbb(\cdot, t) : \bbc(\cdot, t) \dx
			- \int_\Omega \bbb_0 : \bbc(\cdot, 0) \dx; \nonumber
		\end{align}
		\item for a.e.~$ t \in (0,T) $, the following energy estimate holds 
		\begin{align}
			& \cE_\eta(t)
				+ \int_0^t \norm{\sqrt{\nu(\phi)}(\nabla \bu + \tran{\nabla} \bu)(\tau)}_{L^2}^2 \dtau 
				\nonumber \\
			& \qquad
				+ \int_0^t \norm{\sqrt{m(\phi)}\nabla q(\tau)}_{L^2}^2 \dtau
				+ \eta \int_0^t \norm{\ptial{t} \phi(\tau)}_{L^2}^2 \dtau
				\label{eqs:energy-dissipation-reg-inequality} \\
			& \qquad + \int_0^t \Bigg(\norm{\frac{\mu_\eta(\phi)}{2}\tr(\bbb
				+ \inv{\bbb} - 2\bbi)(\tau)}_{L^1} 
				+ \frac{\kappa}{d} \norm{\nabla \tr \ln \bbb(\tau)}_{L^2}^2\Bigg) \dtau 
				\leq {\cE_\eta^0},
				\nonumber
		\end{align}
		where {the energy of the regularized system is defined by \eqref{eqs:energy-reg} and $ \cE_\eta^0 \leq C(\overline{\mu}, \underline{\mu}) \cE_0 $ uniformly regarding $ \eta $}.
	\end{enumerate}
\end{definition}
Our main result in this section will be as follows.
\begin{theorem}
	\label{thm:main-reg}
	Let Assumption \ref{ass:weak} hold with $\Omega\in\bbr^d$, $ d \in \{2,3\} $, $ (\bu_0, \bbb_0, \phi_0) \in L_\sigma^2(\Omega) \times L^2(\Omega; \bbr_{\mathrm{sym}}^{d \times d}) \times W^{1,2}(\Omega) $ with $\tr \ln \bbb_0 \in L^1(\Omega) $, $ \bbb_0 $ positive definite and $ \abs{\phi_0} \leq 1 $ a.e.~in $\Omega$ and $ \fint_\Omega \phi_0 \dx \in (-1, 1) $. Then there exists a finite energy weak solution $ (\bu, \phi, q, \bbb) $ of \eqref{eqs:Model_reg} in the sense of Definition \ref{def:model_reg}.
\end{theorem}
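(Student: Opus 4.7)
The plan is to construct weak solutions to \eqref{Eqs:Model_reg} via a hybrid implicit time discretization, as outlined in Section \ref{sec:technical-discussions}, that decouples the AGG-type part from the Oldroyd-B part and exploits the well-posedness result of Theorem \ref{thm:B-equation} on each subinterval. Fix $N\in\bbn$, let $h=T/N$ and set $t_n = nh$. Given discrete data $(\bu^n,\phi^n)$ at the $n$-th time step (with $(\bu^0,\phi^0)=(\bu_0,\phi_0)$), I first solve the tensor equation \eqref{Eqs:B-equation} on $(t_n,t_{n+1})$ with coefficients $(\bv,\xi)=(\bu^n,\phi^n)$ (interpreted as constant in time, thus clearly in the class \eqref{Eqs:B-v-phi}) and initial datum coming from the time-averaged value of $\bbb$ at $t_n$ produced in the previous step; by Theorem \ref{thm:B-equation} this yields a unique, continuously-dependent $\bbb=\bbb[\bu^n,\phi^n]$. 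I then solve the discrete AGG-type problem for $(\bu^{n+1},\phi^{n+1},q^{n+1})$ in the spirit of Abels--Depner--Garcke \cite{ADG2013}, where in the momentum balance and in the equation for $q$ the tensor $\bbb$ is replaced by its time-average $\bar\bbb^n\coloneqq \tfrac{1}{h}\int_{t_n}^{t_{n+1}}\bbb\,\d\tau$. Because $\bbb$ depends continuously on $(\bu^n,\phi^n)$, the whole step defines a compact continuous map, and existence of $(\bu^{n+1},\phi^{n+1},q^{n+1})$ follows from the Leray--Schauder principle, with an \emph{a priori} bound provided by the discrete energy identity.

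The key step is to derive a discrete energy identity matching the formal one \eqref{Eqs:energy-dissipation-reg-inequality}. Testing the discrete momentum equation with $\bu^{n+1}$, the phase equation with $q^{n+1}$, the chemical potential equation with $-(\phi^{n+1}-\phi^n)/h$, and the continuous-in-time equation for $\bbb$ on $(t_n,t_{n+1})$ with $\tfrac{\mu_\eta(\phi^n)}{2}(\bbi-\bbb^{-1})$ (integrated over the interval), all mixed terms coupling $\bbb$ with $(\bu,\phi)$ cancel provided the time-averaged $\bar\bbb^n$ is used consistently; this is precisely the purpose of the hybrid scheme. Summing over $n$ yields a uniform-in-$N$ bound of the form \eqref{Eqs:energy-dissipation-reg-inequality} together with the stronger $L^2$ bound for $\bbb$ of Section \ref{sec:unifrom_B}, independently of $N$.

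With these estimates I construct piecewise-constant-in-time interpolants $(\bu^N,\phi^N,q^N)$ and the piecewise-continuous interpolant $\widetilde\bbb^N$ (continuous on each $(t_n,t_{n+1})$ but with jumps at $t_n$). The uniform bounds give weak / weak-$\ast$ limits $(\bu,\phi,q,\bbb)$ in the spaces listed in Definition \ref{def:model_reg}. Strong compactness for $(\bu^N,\phi^N)$ follows from an Aubin--Lions / time-translation argument once the dual bound for $\pt(\rho(\phi^N)\bu^N)$ and $\pt\phi^N$ is established from the discrete momentum and phase equations (using the $\sR_\eta$-regularization, which makes all nonlinearities with $\bbb$ harmless in $L^1_tL^2_x$). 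The nontrivial point, and the main obstacle I anticipate, is the passage to the limit in the time-averaged quantities $\bar\bbb^n$ appearing in the AGG block: Aubin--Lions does not apply to $\widetilde\bbb^N$ because of the jumps at $t_n$ and the lack of a global $\pt\widetilde\bbb^N$ on $(0,T)$. I will handle this by introducing a time-mollification with a Dirac sequence and using the Lebesgue differentiation theorem to identify the limit of the step-wise averages with the limit of $\widetilde\bbb^N$, combined with a compactness estimate via time translations on each subinterval (see Section \ref{sec:compactness-B}). This converts the averaged terms into genuine pointwise-in-time values in the limit.

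Finally, the limit passage in the nonlinear terms is routine once strong convergence of $\bu^N,\phi^N$ in $L^2(0,T;L^2)$ (and weak convergence of $\nabla\bu^N,\nabla\phi^N$) is available: the $\sR_\eta$-mollification turns $\sR_\eta\bu^N,\sR_\eta\phi^N$ into smooth sequences converging in $C(\overline{Q_T})$ so that the products $\mu_\eta(\phi^N)(\bbb^N-\bbi)$, $\tfrac{\mu_\eta'(\phi^N)}{2}\tr(\bbb^N-\ln\bbb^N-\bbi)$, and $\Div(\sR_\eta\bu^N\otimes\bbb^N)$ converge in the sense of distributions. The singular potential $W'(\phi)$ is controlled by the standard AGG-type argument (multiplying \eqref{Eqs:weak-reg-q-formulation} by $W'(\phi^N)$ and using $|\phi_0|\le 1$ and $\fint_\Omega\phi_0\in(-1,1)$ to bound $\|W'(\phi^N)\|_{L^2_{t,x}}$, which in turn forces $|\phi|<1$ a.e.). Positive definiteness of the limit $\bbb$ and control of $\tr\ln\bbb$ are inherited from Lemma \ref{lem:positive_B} and the uniform entropy bound. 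The energy inequality \eqref{Eqs:energy-dissipation-reg-inequality} then follows by lower semicontinuity and Fatou from the discrete identity, and Lemma \ref{lem:energy-dissipation} together with Lemma \ref{lem:C_w} upgrades the weak-$\ast$ regularity in time to $C_w([0,T];\cdot)$. This completes the proof of Theorem \ref{thm:main-reg}.
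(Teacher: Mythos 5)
Your proposal follows essentially the same route as the paper: a hybrid implicit time discretization that solves the Oldroyd--B subproblem continuously in time on each subinterval via Theorem~\ref{thm:B-equation} and couples it to an AGG-type discrete block through time-averaged $\bbb$-quantities (so that the mixed terms cancel in the discrete energy balance), Leray--Schauder for the discrete step, time-translation compactness and Dirac-sequence mollification (Section~\ref{sec:compactness-B}) for the jump-discontinuous $\widetilde\bbb^N$, and lower semicontinuity plus Lemmata~\ref{lem:energy-dissipation} and~\ref{lem:C_w} for the limit energy inequality. One caution worth naming: the sentence ``Strong compactness for $(\bu^N,\phi^N)$ follows from an Aubin--Lions / time-translation argument once the dual bound for $\pt(\rho(\phi^N)\bu^N)$\ldots{} is established'' is, as written, not quite right for unmatched densities---the discrete momentum equation controls only $\pt\bbp_\sigma(\rho^N\bu^N)$ (the pressure gradient is invisible to divergence-free test functions), so the paper first obtains strong $L^2$-convergence of $\bbp_\sigma(\rho^N\bu^N)$, then proves convergence of the kinetic energy $\int\rho^N|\bu^N|^2$ by pairing it with the weakly convergent $\bu^N$, which yields strong convergence of $\sqrt{\rho^N}\bu^N$ and only thereafter of $\bu^N$ itself; applying Aubin--Lions directly to $\bu^N$ or to $\rho^N\bu^N$ would not close.
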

\begin{remark}
    \label{rem:dimension-discussion} The theorem is {finally} proved in Section \ref{sec:proof-reg}.
    Thanks to the regularization $\sR_\eta$ we introduced, the existence theorem can be established both in two and three dimensions.
    This cannot be reached when passing to the limit as $ \eta \rightarrow 0 $ in Section \ref{sec:proof-of-weak-solution}, where we need a stronger estimate of $\bbb$ (uniform in $\eta$) in Section \ref{sec:a-priori-reg-strong}, which is restricted to two dimensions.
\end{remark}

    In the rest of this section, we only proceed with the case $ d = 3 $, while the other case $ d = 2 $ is even simpler with better Sobolev embeddings. For this, we refer to the final proof of Theorem \ref{thm:main} in Section \ref{sec:proof-weak-limiting}.
{
To prove Theorem \ref{thm:main-reg}, we employ a hybrid time discretization scheme, drawing on the solvability of the AGG model as presented in \cite{ADG2013} and the regularized Oldroyd-B equation from Section \ref{sec:B}.
We begin with the time discretization technique from \cite{ADG2013}, which involves using piecewise continuous data for $\bbb$ with time integral averaging (see Section \ref{sec:time-discretization}).
Then, applying the Leray--Schauder principle, we establish the solvability of the time-discrete solutions in Section \ref{sec:existence-hybrid}. For these time-discrete solutions, we construct approximate solutions to \eqref{eqs:Model_reg} in Section \ref{sec:construction-approx}. Finally, we derive the corresponding uniform estimates in Section \ref{sec:proof-reg} using a discrete energy inequality. We then pass to the limit, thus completing the proof of Theorem \ref{thm:main-reg}. }

In the presence of the singular, non-convex potential $W(\phi)$, we first rewrite the chemical potential. 
The idea is to use the subdifferential of a convex potential, which was also employed in \cite{AW2007} for the Cahn--Hilliard equation and in \cite{ADG2013} for the Cahn--Hilliard--Navier--Stokes equation. First we define an extended potential $ \widetilde{W} $ through
\begin{equation*}
    \widetilde{W} : \bbr \rightarrow \bbr, \quad
    \widetilde{W}(s) = \left\{
        \begin{aligned}
            & W(s) &&\ \text{if } s \in [-1, 1], \\
            & + \infty &&\ \text{else},
        \end{aligned}
    \right.
\end{equation*}
where $ W $ is the potential fulfilling Assumption \ref{ass:free-energy}. Note that $ \widetilde{W} $ is not necessarily convex by this assumption. Hence, we define $ \widetilde{W}_0(r) \coloneqq \widetilde{W}(r) + \frac{\omega}{2} r^2 $, which satisfies $ \widetilde{W}_0 \in C([-1,1]) \cap C^2((-1,1)) $ and is convex due to the Assumption \ref{ass:free-energy}. In particular, we have $ \widetilde{W}'(r) = \widetilde{W}_0'(r) - \omega r $. Now \eqref{eqs:CH-q_reg} is equivalent to
\begin{equation*}
    q + \omega \phi = \widetilde{W}_0'(\phi) - \Delta \phi
		+ \eta \ptial{t} \phi
        + \sR_\eta \Big[\frac{\mu_\eta'(\phi)}{2} \tr(\bbb - \ln \bbb - \bbi)\Big], \quad \text{a.e.~in } Q_T.
\end{equation*}
Following \cite{AW2007,ADG2013}, we define a modified energy $ \widetilde{E}: L^2(\Omega) \rightarrow \bbr \cup \{+\infty\} $ by
\begin{equation*}
    \widetilde{E}(\phi) = \left\{
        \begin{aligned}
            & \onehalf \int_\Omega \abs{\nabla \phi}^2 + \int_\Omega \widetilde{W}_0(\phi) &&\ \text{for } \phi \in \cD(\widetilde{E}), \\
            & + \infty &&\ \text{else},
        \end{aligned}
    \right.
\end{equation*}
with the domain of definition
\begin{equation*}
    \cD(\widetilde{E}) = \left\{
        \phi \in W^{1,2}(\Omega) : - 1 \leq \phi \leq 1 \text{ a.e.}
    \right\}.
\end{equation*}
Then, as in \cite{ADG2013}, the domain of the definition of the subgradient $ \partial \widetilde{E} $ is
\begin{equation*}
    \cD(\partial \widetilde{E}) = \left\{
        \phi \in W^{2,2}(\Omega) : 
            \widetilde{W}_0'(\phi) \in L^2(\Omega), \ 
            \widetilde{W}_0''(\phi) \abs{\nabla \phi}^2 \in L^1(\Omega),\ 
            \ptial{\bn} \phi |_{\partial \Omega} = 0
    \right\}.
\end{equation*}
With these definitions, one has
\begin{equation*}
    \partial \widetilde{E}(\phi) = - \Delta \phi + \widetilde{W}_0'(\phi) \quad \text{for } \phi \in \cD(\partial \widetilde{E}).
\end{equation*}
Note that $ \partial \widetilde{E} $ is maximal monotone by the convexity of $ \widetilde{W}_0 $ and the lower semicontinuity.
Moreover, it holds that
\begin{equation}
    \label{eqs:subdifferential-estimate}
    \norm{\phi}_{W^{2,2}}^2 + \normm{\widetilde{W}_0'(\phi)}_{L^2}^2
    + \int_\Omega \widetilde{W}_0''(\phi(x)) \abs{\nabla \phi(x)}^2 \dx
    \leq C \Big( \normm{\partial \widetilde{E}(\phi)}_{L^2}^2 + \norm{\phi}_{L^2}^2 + 1 \Big).
\end{equation}


\subsection{Hybrid Implicit Time Discretization}
\label{sec:time-discretization}
Inspired by the well-posedness result of the Oldroyd-B equation \eqref{eqs:B-equation} and the implicit time discretization argument in \cite{ADG2013} to solve the AGG model with unmatched densities and singular potential, we propose a \textit{hybrid implicit time discretization} for the whole regularized system \eqref{eqs:Model_reg}. Namely, for the AGG part, we employ a time discretization similarly to \cite{ADG2013}, while the Oldroyd-B part is solved continuously in time with the help of Theorem \ref{thm:B-equation} on each discrete time interval.

More precisely, let $T>0$, $ h = \frac{T}{N} $ for $ N \in \bbn $, $ \bbb_0 \in L^2(\Omega; \bbr_{\mathrm{sym}}^{d \times d}) $ positive definite a.e.~in $\Omega$ with $ \tr \ln \bbb_0 \in L^1(\Omega) $, and, for $k\in\{0,...,N-1\}$, let $ \bu_k \in L^2_\sigma(\Omega) $, $ \phi_k \in W^{1,2}(\Omega) $ with $ W'(\phi_k) \in L^2(\Omega) $, and $ \rho_k = \onehalf (\rho_1 + \rho_2) + \onehalf (\rho_2 - \rho_1) \phi_k $ be given. We construct $ (\bu, \phi, q, \widetilde{\bbb}) = (\bu_{k + 1}, \phi_{k + 1}, q_{k + 1}, \widetilde{\bbb}_{k + 1}) $ as a solution of the following nonlinear system.

	\begin{subequations}
		\label{eqs:regularized-discret}
		Let $ I_{k+1} \coloneqq (t_k,t_{k+1}) $ with $ t_k = kh $, $ k \in \{0,...,N-1\}$. Find $ (\bu, \phi, q, \widetilde{\bbb}) $ with $ \bu \in W_{0,\sigma}^{1,2}(\Omega) $, $ \phi \in \cD(\partial \widetilde{E}) $, $ \mu \in W_n^{2,2} = \{u \in W^{2,2}(\Omega): \partial_\bn u |_{\partial \Omega} = 0 \text{ on } \partial \Omega\} $, and $ \widetilde{\bbb} \in L^\infty(I_{k+1};L^2(\Omega;\bbr_{\mathrm{sym}}^{d \times d})) \cap L^2(I_{k+1};W^{1,2}(\Omega;\bbr_{\mathrm{sym}}^{d \times d})) $ with $ \widetilde\bbb $ positive definite a.e.~in $ \Omega $ and $ \tr \ln \widetilde\bbb \in L^1(I_{k+1};L^1(\Omega)) $, such that $ \overline{\bbb}_{k+1} = \frac{1}{h} \int_{I_{k+1}} \widetilde{\bbb}(t) \dt $, where $ \widetilde{\bbb} $ is a weak solution of \eqref{eqs:B-equation} in the time interval $ \overline{I}_{k+1} $ with initial data $ \overline{\bbb}_k $ and data $ (\bv, \xi)(t) = (\bu, \phi_k) $, $ t \in I_{k+1} $, where $\overline\bbb_0 = \bbb_0$, that is,
		\begin{equation}
			\label{eqs:B-equation-discrete}
			\begin{alignedat}{3}
				\pt \widetilde{\bbb} + \sR_\eta \bu \cdot \nabla \widetilde{\bbb} - \widetilde{\bbb} \tran{\nabla} \sR_\eta \bu - \nabla \sR_\eta \bu \widetilde{\bbb} + (\widetilde{\bbb} - \bbi) & = \frac{\kappa}{\mu_\eta(\phi_k)} \Delta \widetilde{\bbb},  &&  \tin \Omega \times I_{k+1}, \\
				\ptial{\bn} \widetilde{\bbb} & = {\mathbb{O}}, && \ton \partial \Omega \times I_{k+1}, \\
				\widetilde{\bbb}(t_k) & = \overline{\bbb}_k, && \tin \Omega,
			\end{alignedat}
		\end{equation}
        in the weak sense. 
		Note that we have continuous dependence of $ \widetilde{\bbb} $ on $ (\bu, \phi_k, \overline\bbb_k) $ by Theorem \ref{thm:B-equation}. Moreover, we consider the following discrete problem with time-averaged terms with respect to $ \widetilde{\bbb} $:
		\begin{equation}
			\label{eqs:AGG-discret-u}
			\begin{aligned}
				& \inner{\frac{\rho \bu - \rho_k \bu_k}{h}}{\bw}
				+ \inner{\Div (\rho_k \bu \otimes \bu)}{\bw} 
				+ \inner{\Div (\bu \otimes \bJ)}{\bw} \\
				& \qquad + \inner{\nu(\phi_k)(\nabla \bu + \tran{\nabla} \bu)}{\nabla \bw} 
				- \frac1h \int_{I_{k+1}} \inner{\Div \sR_\eta \big[\mu_\eta(\phi_k)(\widetilde\bbb - \bbi)\big]}{\bw} \dt \\
				& \qquad = \inner{q \nabla \phi_k}{\bw}
				+ \frac1h \int_{I_{k+1}} \inner{\sR_\eta \Big[\frac{\mu_\eta(\phi_k)}{2}   \nabla \tr (\widetilde\bbb - \ln\widetilde\bbb - \bbi) \Big]}{\bw} \dt
			\end{aligned}
		\end{equation}
		for {a.e.~$t\in I_{k+1}$} and all $ \bw \in C_0^\infty(\overline{\Omega}) $ with $ \Div \bw = 0 $, where
		\begin{equation*}
			\bJ = \bJ_{k + 1} = - \rho'(\phi_k) m(\phi_k) \nabla q_{k + 1}
			= - \rho'(\phi_k) m(\phi_k) \nabla q.
		\end{equation*}
		In addition, for a.e.~$ x \in \Omega $ and {$t\in I_{k+1}$},,
		\begin{gather}
			\label{eqs:AGG-discret-phi}
			\frac{\phi - \phi_k}{h} + \bu \cdot \nabla \phi_k = \Div(m(\phi_k) \nabla q), \\
			\label{eqs:AGG-discret-q}
			\begin{aligned}
				q + \omega \frac{\phi + \phi_k}{2} & = \widetilde{W}_0'(\phi) - \Delta \phi
				+ \eta \frac{\phi - \phi_k}{h} \\
				& \quad
				+ \sR_\eta \Big[\onehalf \frac{\mu_\eta(\phi) - \mu_\eta(\phi_k)}{\sR_\eta (\phi - \phi_k)} \frac{1}{h} \int_{I_{k+1}} \tr(\widetilde{\bbb} - \ln \widetilde{\bbb} - \bbi)(t) \dt\Big].
			\end{aligned}
		\end{gather}
	\end{subequations}

\begin{remark}
In the above system, we distinguish between time-discrete functions and functions that are continuous in time. That is the reason why we write a tilde on the top of the {tensor} $\widetilde\bbb$. 
In order to make sure the time-continuous problem \eqref{eqs:B-equation-discrete} and the time-discrete subsystem \eqref{eqs:AGG-discret-u}--\eqref{eqs:AGG-discret-q} are compatible with each other, we make use of time averages for the terms containing $\widetilde\bbb$ in the time-discrete problem. In fact, it is necessary to take advantage of all the information of $\widetilde\bbb$ in a fixed time interval so that the terms regarding $\widetilde\bbb$ are well-defined concerning the regularity. 

Another motivation is to get the uniform energy estimate for the whole hybrid discrete problem with respect to $h$, for which we approximate $ \mu_\eta'(\phi) $ due to the variable shear modulus by the difference quotient $\frac{\mu_\eta(\phi) - \mu_\eta(\phi_k)}{\sR_\eta (\phi - \phi_k)}$, and proceed with the energy estimate \eqref{eqs:B-identity-discrete} for $\widetilde\bbb$ in the interval $I_{k+1}$. With all these ingredients, we establish a uniform estimate 
by canceling the mixed terms corresponding to $\widetilde\bbb$. 

\end{remark}

\begin{remark}
The approximation of $\mu_\eta'(\phi)$ with the difference quotient $\frac{\mu_\eta(\phi) - \mu_\eta(\phi_k)}{\sR_\eta (\phi - \phi_k)}$ is well-defined for $\phi \neq \phi_k$, as $\mu \in C^1(\bbr)$ with bounded derivative in view of Assumption \ref{ass:lower-upper-bounds}. Indeed, by the mean value theorem, it holds
    \begin{equation*}
        \frac{\mu_\eta(\phi) - \mu_\eta(\phi_k)}{\sR_\eta (\phi - \phi_k)}
        = \mu_\eta'(\xi) \in [\underline{\mu}' , \overline{\mu}'],
    \end{equation*}
    {for some $\xi$ between $\phi$ and $\phi_k$}. Moreover, as $\phi_k \to \phi$ a.e.~in $Q_T$, it holds
    \begin{equation*}
        \frac{\mu_\eta(\phi) - \mu_\eta(\phi_k)}{\sR_\eta (\phi - \phi_k)}
        \to \mu_\eta'(\phi).
    \end{equation*}
\end{remark}

	\begin{remark}
	    By simply integrating \eqref{eqs:AGG-discret-phi} over $\Omega$ and applying integration by parts over $ \Omega $ together with $ \Div \bu = 0 $ and the Neumann boundary condition $ \ptial{\bn} q |_{\partial \Omega} = 0 $, one infers that
	    \begin{equation*}
	        \int_\Omega \phi(t) \dx = \int_\Omega \phi_k \dx = \int_\Omega \phi_0 \dx
	    \end{equation*}
	    {for a.e.~$t\in I_{k+1}$ and any $k \in \{0, \dots, N-1\}$}, which implies that the mass is conserved for the time-discrete problem.
	\end{remark}
	In a similar fashion as in \cite[Lemma 4.2]{ADG2013}, we have the following lemma, which will be frequently used later. For the reader's convenience, we point out the main differences compared to \cite[Lemma 4.2]{ADG2013} arising from the additional terms.
	\begin{lemma}
	    \label{lem:convex-potential-L1-q}
	    Let $ \phi \in \cD (\partial \widetilde{E}) $ and $ q \in W^{1,2}(\Omega) $ be solving \eqref{eqs:AGG-discret-q} with given $ \phi_k \in W^{2,2}(\Omega) $ satisfying $ \abs{\phi_k} \leq 1 $ and
	    \begin{equation*}
	        \sqrt{\eta} \frac{\phi - \phi_k}{h} \in L^2(\Omega), \quad 
	        \frac{1}{\abs{\Omega}} \int_\Omega \phi \dx
	        = \frac{1}{\abs{\Omega}} \int_\Omega \phi_k \dx
	        \in (-1, 1).
	    \end{equation*}
	    Moreover, let $ \widetilde{\bbb} \in L^2(I_{k+1}; L^2(\Omega; \bbr_{\mathrm{sym}}^{d \times d}))  $ be positive definite with $ \tr \ln \widetilde{\bbb} \in L^2(I_{k+1}; L^2(\Omega)) $. Then, there exists a constant $ C > 0 $ depending on $ \eta $ and $\int_\Omega \phi_k \dx $ such that
	    \begin{align*}
	        \norm{\widetilde{W}_0'(\phi)}_{L^1(\Omega)}
	        + \abs{\int_\Omega q \dx}
	        \leq C \Big( 
	            & \norm{\nabla q}_{L^2}^2
	            + \norm{\nabla \phi}_{L^2}^2 
	            + \norm{\nabla \phi_k}_{L^2}^2 \\
	            & + \eta \norm{\frac{\phi - \phi_k}{h}}_{L^2}^2 + \frac1h \norm{\tr(\widetilde{\bbb} - \ln \widetilde{\bbb} - \bbi)}_{L^2(I_{k+1}; L^2(\Omega))}^2
	            + 1
	        \Big).
	    \end{align*}
	\end{lemma}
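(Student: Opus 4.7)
The argument adapts \cite[Lemma 4.2]{ADG2013} (itself modeled on \cite{AW2007}) to accommodate the new mollified viscoelastic source term and the $\eta$-regularization appearing in \eqref{Eqs:AGG-discret-q}. Set $ m \coloneqq \fint_\Omega \phi \dx = \fint_\Omega \phi_k \dx \in (-1,1) $. The cornerstone is the classical trapping property for singular convex potentials: since $\widetilde{W}_0$ is convex with $\widetilde{W}_0'(s)\to\pm\infty$ as $s\to\pm 1$, there exist $ c_0, C_0 > 0 $ depending only on $\dist(m,\{-1,1\})$ (hence on $\fint_\Omega \phi_0 \dx$) such that
\begin{equation*}
    \widetilde{W}_0'(s)(s - m) \geq c_0 \abs{\widetilde{W}_0'(s)} - C_0 \qquad \text{for all } s \in (-1,1);
\end{equation*}
see e.g.~\cite[Lemma 4.2]{ADG2013}. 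This bakes in the $\fint_\Omega \phi_k \dx$-dependence of the final constant.

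\emph{Step 1 ($L^1$-bound on $\widetilde{W}_0'(\phi)$).}
I would test \eqref{Eqs:AGG-discret-q} with $\phi - m$ and integrate over $\Omega$. Three observations simplify the bookkeeping: first, $\int_\Omega(\phi - m)\dx = 0$, so the $q$-term becomes $\int_\Omega (q - \overline q)(\phi - m)\dx$ where $\overline q \coloneqq \fint_\Omega q\dx$; second, the Neumann condition gives $-\int_\Omega \Delta\phi \,(\phi - m)\dx = \norm{\nabla\phi}_{L^2}^2$; third, the trapping inequality yields $\int_\Omega \widetilde{W}_0'(\phi)(\phi - m) \dx \geq c_0 \norm{\widetilde{W}_0'(\phi)}_{L^1} - C_0 \abs{\Omega}$. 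The four remaining terms are then estimated as follows. The $q$-contribution is bounded via Poincar\'e--Wirtinger together with $\norm{\phi - m}_{L^2} \leq 2\sqrt{\abs{\Omega}}$, producing $\leq C\norm{\nabla q}_{L^2}^2 + C$. The $\omega$-term is a pure constant, since $\abs{\phi},\abs{\phi_k}\leq 1$. The $\eta$-term is controlled by $\tfrac{\eta}{2}\norm{\tfrac{\phi-\phi_k}{h}}_{L^2}^2 + C$ via Cauchy--Schwarz and Young. Finally, for the mollified viscoelastic term I would use the self-adjointness \eqref{Eqs:mollification-commute} to move $\sR_\eta$ onto $\phi - m$, the bound $\norm{\sR_\eta(\phi - m)}_{L^\infty} \leq 2$ from Lemma \ref{lem:mollification}, the pointwise estimate $\big\vert \tfrac{\mu_\eta(\phi) - \mu_\eta(\phi_k)}{\sR_\eta(\phi - \phi_k)} \big\vert \leq \overline{\mu}'$ (mean value theorem plus Assumption \ref{ass:lower-upper-bounds}), and the non-negativity $\tr(\widetilde\bbb - \ln\widetilde\bbb - \bbi) \geq 0$ to turn the spatial $L^1$-norm into an integral. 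Cauchy--Schwarz in time followed by Young then yields
\begin{equation*}
    \Big\vert \int_\Omega \sR_\eta\Big[\cdots\Big](\phi - m) \dx \Big\vert
    \leq \frac{1}{2h} \norm{\tr(\widetilde\bbb - \ln\widetilde\bbb - \bbi)}_{L^2(I_{k+1};L^2(\Omega))}^2 + C.
\end{equation*}
Collecting everything and dropping the non-negative $\norm{\nabla\phi}_{L^2}^2$ on the LHS gives the claimed $L^1$-bound on $\widetilde{W}_0'(\phi)$.

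\emph{Step 2 (Bound on $\abs{\int_\Omega q\dx}$).}
Integrating \eqref{Eqs:AGG-discret-q} over $\Omega$ annihilates the $\Delta\phi$-term thanks to the Neumann BC, giving
\begin{equation*}
    \Big\vert \int_\Omega q\dx \Big\vert
    \leq \norm{\widetilde{W}_0'(\phi)}_{L^1} + C + \eta \sqrt{\abs{\Omega}}\, \norm{\tfrac{\phi - \phi_k}{h}}_{L^2} + \norm{\sR_\eta[\cdots]}_{L^1(\Omega)}.
\end{equation*}
The last two contributions are handled exactly as in Step 1, and plugging in the $L^1$-estimate on $\widetilde{W}_0'(\phi)$ proved above finishes the proof.

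\emph{Main obstacle.} The only nontrivial conceptual input is the trapping inequality for $\widetilde{W}_0'$, which is classical but is the sole origin of the hidden constant dependence on $\fint_\Omega \phi_k\dx \in (-1,1)$, and also the only place the hypothesis $\fint_\Omega \phi_k\dx \in (-1,1)$ is genuinely used. The point that deserves technical care is the sharp absorption of the mollified viscoelastic term: one must combine the non-negativity of $\tr(\widetilde\bbb - \ln\widetilde\bbb - \bbi)$ (to pass to spatial $L^1$), Cauchy--Schwarz in time (which produces an unwanted $h^{-1/2}$), and a carefully split Young inequality to land precisely on the $h^{-1}\norm{\cdot}_{L^2(I_{k+1};L^2)}^2$-form displayed in the lemma.
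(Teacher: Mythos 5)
Your proposal is correct and follows essentially the same route as the paper's proof: test \eqref{Eqs:AGG-discret-q} with $\phi$ minus its mean, exploit the trapping inequality $\widetilde{W}_0'(\phi)(\phi - m) \geq c_0\abs{\widetilde{W}_0'(\phi)} - C_0$ to obtain the $L^1$-bound, then integrate the equation over $\Omega$ to bound $\abs{\int_\Omega q\,\dx}$. The only noteworthy cosmetic divergence is in estimating the mollified viscoelastic term: you move $\sR_\eta$ onto $\phi - m$ and use the pointwise bound $\norm{\sR_\eta(\phi-m)}_{L^\infty}\leq 2$ (available since $\abs{\phi},\abs{m}\leq 1$), whereas the paper keeps $\sR_\eta(\phi-\overline\phi)$ in $L^2$ and invokes Poincar\'e--Wirtinger, producing an extra $\norm{\nabla\phi}_{L^2}^2$ that is harmless since that term already appears on the right-hand side. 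Both are valid; your variant is marginally more elementary at this spot, while the paper's version unifies the treatment of all terms via Poincar\'e--Wirtinger.
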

	\begin{proof}
     First of all, testing \eqref{eqs:AGG-discret-q} with $ \phi - \overline{\phi} $, where $ \overline{\phi} \coloneqq \frac{1}{\Omega} \int_\Omega \phi \dx $ is the mean value of $ \phi $ over $ \Omega $, leads to
	    \begin{align*}
	        & \int_\Omega q (\phi - \overline{\phi}) \dx 
	        + \int_\Omega \omega \frac{\phi + \phi_k}{2} (\phi - \overline{\phi}) \dx \\
			& = \int_\Omega \widetilde{W}_0'(\phi) (\phi - \overline{\phi}) \dx 
			+ \int_\Omega \nabla \phi \cdot \nabla (\phi - \overline{\phi}) \dx 
			+ \int_\Omega \eta \frac{\phi - \phi_k}{h} (\phi - \overline{\phi}) \dx \\
			& \quad 
			+ \int_\Omega \frac{\mu_\eta(\phi) - \mu_\eta(\phi_k)}{2\sR_\eta(\phi - \phi_k)} \Big( \frac{1}{h} \int_{I_{k+1}} \tr(\widetilde{\bbb} - \ln \widetilde{\bbb} - \bbi)(t) \dt\Big) \sR_\eta (\phi - \overline{\phi}) \dx.
	    \end{align*}
	    By means of H\"older's and Young's inequalities, and the Poincar\'e--Wirtinger inequality, one has the following estimates:
	    \begin{align*}
	        & \int_\Omega q (\phi - \overline{\phi}) \dx 
	        = \int_\Omega (q - \overline{q}) \phi \dx
	        \leq \norm{q - \overline{q}}_{L^2} \norm{\phi}_{L^2}
	        \leq \norm{\nabla q}_{L^2}^2 + C (\norm{\nabla \phi}_{L^2} + 1), \\
	        & \int_\Omega \omega \frac{\phi + \phi_k}{2} (\phi - \overline{\phi}) \dx
	        \leq C ( \norm{\phi}_{L^2} + \norm{\phi_k}_{L^2} ) \norm{\nabla \phi}_{L^2}
	        \leq C (\norm{\nabla \phi}_{L^2}^2 + \norm{\nabla \phi_k}_{L^2}^2 + 1), \\
	        & \int_\Omega \nabla \phi \cdot \nabla (\phi - \overline{\phi}) \dx 
	        \leq C \norm{\nabla \phi}_{L^2}^2, \\
			& \int_\Omega \eta \frac{\phi - \phi_k}{h} (\phi - \overline{\phi}) \dx 
			\leq \eta \norm{\frac{\phi - \phi_k}{h}}_{L^2} \norm{\phi - \overline{\phi}}_{L^2}
			\leq \eta \norm{\frac{\phi - \phi_k}{h}}_{L^2}^2 
			+ \eta \norm{\nabla \phi}_{L^2}^2, \\
			& \int_\Omega \frac{\mu_\eta(\phi) - \mu_\eta(\phi_k)}{2\sR_\eta(\phi - \phi_k)} \Big( \frac{1}{h} \int_{I_{k+1}} \tr(\widetilde{\bbb} - \ln \widetilde{\bbb} - \bbi)(t) \dt \Big) \sR_\eta (\phi - \overline{\phi}) \dx \\
			& \qquad \qquad \qquad
			\leq \frac{\overline{\mu}'}{4} \frac1h  \norm{\tr(\widetilde{\bbb} - \ln \widetilde{\bbb} - \bbi)}_{L^2(I_{k+1}; L^2(\Omega))}^2 + \frac{\overline{\mu}'}{4} \norm{\nabla \phi}_{L^2}^2,
	    \end{align*}
	    where $ C $ depends on $ \int_\Omega \phi_k \dx $. Moreover, it follows from the assumption $ \lim_{\phi \rightarrow \pm 1} \widetilde{W}_0'(\phi) \rightarrow \pm \infty $ and $ \widetilde{W}_0' \in C([-1 + \frac{\varepsilon}{2}, 1 - \frac{\varepsilon}{2}]) $ due to \ref{ass:free-energy}, together with the fact that $ \overline{\phi} \in (-1 + \varepsilon, 1 - \varepsilon) $ for some $ \varepsilon > 0 $, that
	    \begin{equation*}
	        \widetilde{W}_0'(\phi) (\phi - \overline{\phi}) 
	        \geq 
	        C \abs{\widetilde{W}_0'(\phi)} - C_1,
	    \end{equation*}
	    by discussing the range of $ \phi $ in three cases: $ [-1, - 1 + \frac{\varepsilon}{2}] $, $ [-1 + \frac{\varepsilon}{2}, 1 - \frac{\varepsilon}{2}] $, $ [1 - \frac{\varepsilon}{2}, 1] $.
	    As a consequence, 
	    \begin{equation*}
	        \int_\Omega \widetilde{W}_0'(\phi) (\phi - \overline{\phi}) \dx
	        \geq 
	        C \int_\Omega \abs{\widetilde{W}_0'(\phi)} \dx - C_2.
	    \end{equation*}
	    Collecting all the estimates from above yields
	    \begin{align*}
	        \norm{\widetilde{W}_0'(\phi)}_{L^1(\Omega)}
	        \leq C \Big( 
	            \norm{\nabla q}_{L^2}^2
	            & + \norm{\nabla \phi}_{L^2}^2 
	            + \norm{\nabla \phi_k}_{L^2}^2 \\
	            & + \eta \norm{\frac{\phi - \phi_k}{h}}_{L^2}^2 
	            + \frac1h  \norm{\tr(\widetilde{\bbb} - \ln \widetilde{\bbb} - \bbi)}_{L^2(I_{k+1}; L^2(\Omega))}^2
	            + 1
	        \Big),
	    \end{align*}
	    which thereafter implies that
	    \begin{align*}
	        \abs{\int_\Omega q \dx}
	        \leq C \Big( 
	            \norm{\nabla q}_{L^2}^2
	            & + \norm{\nabla \phi}_{L^2}^2 
	            + \norm{\nabla \phi_k}_{L^2}^2 \\
	            & + \eta \norm{\frac{\phi - \phi_k}{h}}_{L^2}^2 
	            + \frac1h  \norm{\tr(\widetilde{\bbb} - \ln \widetilde{\bbb} - \bbi)}_{L^2(I_{k+1}; L^2(\Omega))}^2
	            + 1
	        \Big),
	    \end{align*}
	    by integrating \eqref{eqs:AGG-discret-q} over $ \Omega $, for which we used $ \int_\Omega \Delta \phi \dx = \int_{\partial \Omega} \ptial{\bn} \phi \,\d \mathcal{H}^{d-1} = 0 $.
	\end{proof}

\begin{remark}
The condition $\tr \ln \widetilde{\bbb} \in L^2(I_{k+1}; L^2(\Omega))$ in Lemma \ref{lem:convex-potential-L1-q} follows directly from Theorem \ref{thm:B-equation} and the Poincar\'e--Wirtinger inequality.
\end{remark}
 
\subsection{Existence for the Hybrid Discrete System}
\label{sec:existence-hybrid}
Now we present and prove the existence of solutions to the hybrid discrete system \eqref{eqs:regularized-discret}.
\begin{proposition}
    \label{prop:existence-hybrid}
	Let $ \bbb_0 \in L^2(\Omega; \bbr_{\mathrm{sym}}^{d \times d}) $ be positive definite a.e.~in $\Omega$ with $ \tr \ln \bbb_0 \in L^1(\Omega) $, and, for $k\in\{0,...,N-1\}$, let $ \bu_k \in L_\sigma^2(\Omega) $, $ \phi_k \in W^{2,2}(\Omega) $, and $ \rho_k = \onehalf (\rho_1 + \rho_2) + \onehalf (\rho_2 - \rho_1) \phi_k $ be given. Then there are some $ (\bu, \phi, q, \widetilde\bbb) \in W_{0,\sigma}^{1,2}(\Omega) \times \cD(\partial \widetilde{E}) \times W_n^{2,2} \times L^\infty(I_{k+1};L^2(\Omega;\bbr_{\mathrm{sym}}^{d \times d})) \cap L^2(I_{k+1};W^{1,2}(\Omega;\bbr_{\mathrm{sym}}^{d \times d})) $ with $ \widetilde\bbb $ positive definite a.e.~in $ \Omega $ and $ \tr \ln \widetilde\bbb \in L^1(\Omega \times I_{k+1}) $ solving \eqref{eqs:regularized-discret} and satisfying the discrete energy inequality
\begin{align}
	& \cE_{AGG}(\bu, \phi) + \frac{1}{h} \int_{I_{k+1}} \cE_{B}(\widetilde\bbb, \phi)(t) \dt
	+ \int_\Omega \rho_k \frac{\abs{\bu - \bu_k}^2}{2} \dx
	+ \int_\Omega \frac{\abs{\nabla \phi - \nabla \phi_k}^2}{2} \dx 
	\nonumber\\
	& \qquad
        + h \int_\Omega \frac{\nu(\phi_k)}{2} \abs{\nabla \bu + \tran{\nabla} \bu}^2 \dx 
	    + h \int_\Omega m(\phi_k) \abs{\nabla q}^2 \dx
		+ \eta h \int_\Omega \abs{\frac{\phi - \phi_k}{h}}^2 \dx
        \nonumber\\
	&\qquad 
        + \frac{\kappa}{d} \int_{I_{k+1}} \int_\Omega \abs{\nabla \tr \ln \widetilde\bbb}^2 \dxdt
        + \int_{I_{k+1}} \int_\Omega \frac{\mu_\eta(\phi_k)}{2} \tr(\widetilde\bbb
        + \widetilde\bbb^{-1} - 2\bbi) \dx \dt
	\nonumber\\
	& \quad \leq \cE_{AGG}(\bu_k, \phi_k)
	+ \frac{1}{h} \int_{I_k} \cE_{B}(\widetilde{\bbb}_k, \phi_k)(t) \dt,
	\label{eqs:discrete-energy-Notime}
	\end{align}
	where 
	\begin{equation*}
		\cE_{AGG}(\bu, \phi) \coloneqq \int_\Omega \frac{\rho(\phi)}{2} \abs{\bu}^2 \dx
		+ \int_\Omega \Big(\frac{1}{2} \abs{\nabla \phi}^2 + W(\phi)\Big) \dx, 
	\end{equation*}
	and
	\begin{equation*}
	    \cE_{B}(\bbb, \phi) \coloneqq \int_\Omega \frac{\mu_\eta(\phi)}{2} \tr(\bbb - \ln \bbb - \bbi) \dx.
	\end{equation*}
\end{proposition}
The main strategy of the proof is as follows. First, we derive a uniform energy estimate which holds true for any solution $(\bu, \phi, q, \widetilde\bbb)$ of \eqref{eqs:regularized-discret}. 
    Then, we justify the existence of at least one solution of \eqref{eqs:regularized-discret} with a fixed point argument. More precisely, we first construct a continuous mapping $(\bv, \phi_k, \overline\bbb_k) \mapsto \mathcal{T}_{k+1}(\bv, \phi_k, \overline\bbb_k)$, see \eqref{eqs:mapping-B_k+1}, where $\mathcal{T}_{k+1}(\bv, \phi_k, \overline\bbb_k)$ denotes the unique solution of the Oldroyd-B equation \eqref{eqs:B-equation} on the time interval $I_{k+1}$, which depends continuously on a given function triplet $(\bv, \phi_k, \overline\bbb_k)$, see Theorem \ref{thm:B-equation} and \eqref{eqs:B-uniqueness-estimate}.
    Subsequently, we rewrite \eqref{eqs:regularized-discret} as a fixed point problem for $(\bu, \phi, q)$ with $\widetilde\bbb$ implicitly given by $ \widetilde\bbb = \mathcal{T}_{k+1}(\bu, \phi_k, \overline\bbb_k)$, which is then solved with the Leray--Schauder principle.
\begin{proof}[{Proof of Proposition \ref{prop:existence-hybrid}}]
    {\textit{\uline{Energy inequality.}}} 
    With a standard testing procedure as in \cite{ADG2013}, one may expect a similar estimate, except for the terms regarding $ \widetilde{\bbb} $. Namely, testing \eqref{eqs:AGG-discret-u} with $ \bu $, \eqref{eqs:AGG-discret-phi} with $ q $ and \eqref{eqs:AGG-discret-q} with $\frac{\phi-\phi_k}{h}$, and multiplying everything by $ h $, one has
    \begin{align}
        &\cE_{AGG}(\bu_, \phi) 
        + \int_\Omega \rho_k \frac{\abs{\bu - \bu_k}^2}{2} \dx
        + \int_\Omega \frac{\abs{\nabla \phi - \nabla \phi_k}^2}{2} \dx 
        \nonumber\\
        & \quad + h \int_\Omega \frac{\nu(\phi_k)}{2} \abs{\nabla \bu + \tran{\nabla} \bu}^2 \dx 
        + h \int_\Omega m(\phi_k) \abs{\nabla q}^2 \dx
        + \eta h \int_\Omega \abs{\frac{\phi - \phi_k}{h}}^2 \dx 
        \nonumber\\
        &\quad +  \int_\Omega \frac{\mu_\eta(\phi) - \mu_\eta(\phi_k)}{2} \Big(\frac{1}{h} \int_{I_{k+1}} \tr(\widetilde{\bbb} - \ln \widetilde{\bbb} - \bbi)(t) \dt\Big) \dx 
        \nonumber\\
        &\leq 
        \cE_{AGG}(\bu_{k}, \phi_{k}) 
        + \int_{I_{k+1}} \int_\Omega \frac{\mu_\eta(\phi_k)}{2} \sR_\eta \bu \cdot \nabla  \tr(\widetilde{\bbb} - \ln \widetilde{\bbb} - \bbi) \dx \dt
        \nonumber\\ 
        &\quad
        - \int_{I_{k+1}} \int_\Omega \mu_\eta(\phi_k) (\widetilde{\bbb} - \bbi) : \nabla \sR_\eta \bu \dx \dt. 
        \label{eqs:AGG-energy-discrete-B}
    \end{align}
    The key point here is to cancel out or control these extra terms associated with $ \widetilde{\bbb} $ uniformly with respect to $ h $, so that the energy will not blow up after summing over $ k = 0,...,N-1 $ and sending $ N \rightarrow \infty$.
    Here, we make sufficient use of the energy structure of $ \widetilde{\bbb} $. Formally multiplying \eqref{eqs:B-equation-discrete} with $ \frac{\mu_\eta(\phi_k)}{2} ( \bbi - \widetilde\bbb^{-1})$ and integrating over $ \Omega \times (t_k, t) $ for a.e.~$ t \in I_{k+1} $ yields
    \begin{align*}
    	& \int_\Omega \frac{\mu_\eta(\phi_k)}{2} \tr(\widetilde\bbb - \ln \widetilde\bbb - \bbi)(t) \dx 
    	+ \int_{t_{k}}^t \int_\Omega \frac{\mu_\eta(\phi_k)}{2} \tr(\widetilde\bbb
    	+ \widetilde\bbb^{-1} - 2\bbi)(s) \dx \d s
    	\nonumber \\
    	&\quad
    	+ \frac{\kappa}{d} \int_{t_k}^t \int_\Omega \abs{\nabla \tr \ln \widetilde\bbb}^2  (s) \dx \d s
    	\nonumber\\
    	&\leq 
    	\int_\Omega \frac{\mu_\eta(\phi_k)}{2} \tr(\overline{\bbb}_k - \ln \overline{\bbb}_k - \bbi) \dx 
    	- \int_{t_k}^t \int_\Omega \frac{\mu_\eta(\phi_k)}{2} \sR_\eta \bu \cdot 
    	\nabla  \tr(\widetilde\bbb - \ln \widetilde\bbb - \bbi) \dx \d s
    	\nonumber \\
    	&\quad
    	+ \int_{t_k}^t \int_\Omega \mu_\eta(\phi_k) (\widetilde\bbb - \bbi) : \nabla \sR_\eta \bu \dx \d s. 
    \end{align*}
    We note that this inequality can be recovered from the limit passage of \eqref{eqs:B-reg-ddt} in Section \ref{sec:uniform-B}.
    Now, integrating over $t\in(t_k, t_{k+1})=I_{k+1}$ and multiplying both sides by $\frac1h$ lead to
    \begin{align}
    	& \frac1h \int_{I_{k+1}}\int_\Omega \frac{\mu_\eta(\phi_k)}{2} \tr(\widetilde\bbb - \ln \widetilde\bbb - \bbi) \dxdt
    	+ \int_{I_{k+1}} \int_\Omega \frac{\mu_\eta(\phi_k)}{2} \tr(\widetilde\bbb
    	+ \widetilde\bbb^{-1} - 2\bbi) \dxdt
    	\nonumber \\
    	&\quad
    	+ \frac{\kappa}{d} \int_{I_{k+1}} \int_\Omega \abs{\nabla \tr \ln \widetilde\bbb}^2  \dxdt
    	\nonumber\\
    	&\leq 
    	\int_\Omega \frac{\mu_\eta(\phi_k)}{2} \tr(\overline{\bbb}_k - \ln \overline{\bbb}_k - \bbi) \dx
     - \int_{I_{k+1}} \int_\Omega \frac{\mu_\eta(\phi_k)}{2} \sR_\eta \bu \cdot 
    	\nabla  \tr(\widetilde\bbb - \ln \widetilde\bbb - \bbi) \dxdt
    	\nonumber \\
    	&\quad
    	+ \int_{I_{k+1}} \int_\Omega \mu_\eta(\phi_k) (\widetilde\bbb - \bbi) : \nabla \sR_\eta \bu \dxdt,
    	\label{eqs:B-identity-discrete}
    \end{align}
    which is with the help of the fundamental theorem of calculus. 
	Using Jensen's inequality, we estimate
	\begin{equation}
	    \label{eqs:entropy-B-averaged}
		\int_\Omega \frac{\mu_\eta(\phi_k)}{2} \tr(\overline{\bbb}_k - \ln \overline{\bbb}_k - \bbi) \dx
		\leq \frac{1}{h} \int_{I_k} \int_\Omega \frac{\mu_\eta(\phi_k)}{2} \tr(\widetilde{\bbb}_k - \ln \widetilde{\bbb}_k - \bbi)(t) \dxdt.
	\end{equation}

    Adding \eqref{eqs:AGG-energy-discrete-B} and \eqref{eqs:B-identity-discrete} and noting \eqref{eqs:entropy-B-averaged} gives rise to the desired uniform estimate, where the mixed terms on the right-hand sides of \eqref{eqs:AGG-energy-discrete-B}--\eqref{eqs:B-identity-discrete} cancel out, i.e.,
    \begin{align*}
        &\cE_{AGG}(\bu, \phi) 
        + \frac{1}{h} \int_{I_{k+1}} \int_\Omega \frac{\mu_\eta(\phi)}{2} \tr(\widetilde{\bbb} - \ln \widetilde{\bbb} - \bbi)(t) \dxdt
        \\
        &\quad 
        + \int_{I_{k+1}} \int_\Omega \frac{\mu_\eta(\phi_k)}{2} \tr(\widetilde\bbb
        + \widetilde\bbb^{-1} - 2\bbi) \dx\dt
        \\
        &\quad
        + \int_\Omega \rho_k \frac{\abs{\bu - \bu_k}^2}{2} \dx
        + \int_\Omega \frac{\abs{\nabla \phi - \nabla \phi_k}^2}{2} \dx 
        + \frac{\kappa}{d} \int_{I_{k+1}} \int_\Omega \abs{\nabla \tr \ln \widetilde\bbb}^2  \dxdt
        \nonumber\\
        & \quad + h \int_\Omega \frac{\nu(\phi_k)}{2} \abs{\nabla \bu + \tran{\nabla} \bu}^2 \dx 
        + h \int_\Omega m(\phi_k) \abs{\nabla q}^2 \dx
        + \eta h \int_\Omega \abs{\frac{\phi - \phi_k}{h}}^2 \dx
        \\
        &\leq 
        \cE_{AGG}(\bu_{k}, \phi_{k}) 
        + \frac{1}{h} \int_{I_k} \int_\Omega \frac{\mu_\eta(\phi_k)}{2} \tr(\widetilde{\bbb}_k - \ln \widetilde{\bbb}_k - \bbi)(t) \dxdt.
    \end{align*}

    {\textit{\uline{Construction of a fixed-point mapping.}}}
    In order to prove the existence of solutions, we proceed with the Leray--Schauder principle (see, e.g., \cite[Lemma 3.1.1, Chapter II]{Sohr2001}), which was carried out in \cite{ADG2013} as well. Now we define
    \begin{gather*}
        X = W_{0,\sigma}^{1,2}(\Omega) \times \cD(\partial \widetilde{E}) \times W_n^{2,2}(\Omega), \quad
        Y = [W_{0,\sigma}^{1,2}(\Omega)]' \times L^2(\Omega) \times L^2(\Omega), \\
        Z_{k+1} = \left\{ 
            \begin{aligned}
                \bbb \in L^2(I_{k+1}; W^{1,2}(\Omega; \bbr_{\mathrm{sym}}^{d \times d})):\  
                & \bbb \text{ is positive definite a.e.~in } \Omega, \\
                & \tr \ln \bbb \in L^1(\Omega \times I_{k+1})
            \end{aligned}
        \right\}.
    \end{gather*}
    By means of Theorem \ref{thm:B-equation}, we construct a mapping $ \cT_{k+1}: X \rightarrow Z_{k+1} $ such that 
    \begin{equation}
        \label{eqs:mapping-B_k+1}
        \widetilde{\bbb} = \cT_{k+1} (\bu, \phi_k, \overline{\bbb}_k) \text{ solves \eqref{eqs:B-equation-discrete} with } \overline{\bbb}_{k} = \frac{1}{h} \int_{I_{k}} \widetilde{\bbb}_k(t) \dt,
    \end{equation}
    which are both positive definite a.e.~in $ \Omega $, and such that the inequality \eqref{eqs:B-identity-discrete} is fulfilled.

    Next, we are going to deal with \eqref{eqs:AGG-discret-u}--\eqref{eqs:AGG-discret-q} with the help of \eqref{eqs:mapping-B_k+1}. 
    For $ \bw = (\bu, \phi, q) \in X $, we define $ \cL_k: X \rightarrow Y $ as
    \begin{equation*}
        \cL_k(\bw) = \left(
            \begin{gathered}
                L_k(\bu) \\
                - \Div(m(\phi_k) \nabla q) + \int_\Omega q \dx \\
                \partial \widetilde{E}(\phi) + \phi 
            \end{gathered}
        \right),
    \end{equation*}
    where
    \begin{equation*}
        \inner{L_k(\bu)}{\bv}_{[W_{0,\sigma}^{1,2}]' \times W_{0,\sigma}^{1,2}} = \int_\Omega \nu(\phi_k) (\nabla \bu + \tran{\nabla} \bu): \nabla \bv \dx \quad \text{for } \bv \in W_{0,\sigma}^{1,2}(\Omega),
    \end{equation*}
    and the second and third lines are understood in the pointwise sense.
    Moreover, for $ \bw = (\bu, \phi, q) \in X $ we introduce $ \cF_k: X \rightarrow Y $ as
    \begin{equation*}
        \cF_k(\bw) = \left(
            \begin{gathered}
                F_k(\bw) \\
                - \frac{\phi - \phi_k}{h} - \bu \cdot \nabla \phi_k + \int_\Omega q \dx \\
                - \eta \frac{\phi - \phi_k}{h}
                + \phi + q + \omega \frac{\phi + \phi_k}{2} - G(\bw, \widetilde{\bbb})
            \end{gathered}
        \right),
    \end{equation*}
    where
    \begin{align*}
    	G(\bw, \widetilde{\bbb})
    	= &\, \sR_\eta \Big[\onehalf \frac{\mu_\eta(\phi) - \mu_\eta(\phi_k)}{\sR_\eta (\phi - \phi_k)} \frac{1}{h} \int_{I_{k+1}} \tr(\widetilde{\bbb} - \ln \widetilde{\bbb} - \bbi)(t) \dt \Big], \\
        F_k(\bw) = 
        & - \frac{\rho \bu - \rho_k \bu_k}{h}
        - \Div(\rho_k \bu \otimes \bu) 
        + q \nabla \phi_k \\
        & - \Big( \Div \bJ - \frac{\rho - \rho_k}{h} - \bu \cdot \nabla \rho_k \Big) \frac{\bu}{2}
        - (\bJ \cdot \nabla) \bu \\
        & - \Div \big(\sR_\eta \big[\mu_\eta(\phi_k)(\overline{\bbb}_{k+1} - \bbi)\big]\big) + \frac{1}{h} \int_{I_{k+1}} \sR_\eta \Big[\frac{\mu_\eta(\phi_k)}{2} \nabla \tr(\widetilde{\bbb} - \ln \widetilde{\bbb} - \bbi)\Big] \dt,
    \end{align*}
    and $\overline{\bbb}_{k+1}$ and $\widetilde{\bbb} $ are constructed in \eqref{eqs:mapping-B_k+1}. Consequently, we see that $ (\bu, \phi, q, \widetilde\bbb) $ is a weak solution of the system \eqref{eqs:regularized-discret} if and only if $ \bw = (\bu, \phi, q) \in X $ satisfies
    \begin{equation}
    	\label{eqs:LF}
        \cL_k(\bw) - \cF_k(\bw) = 0.
    \end{equation}
    {Note that by \eqref{eqs:LF}, if one can show the invertibility of $\cL_k$ and continuity of $\cF_k$, we obtain a fixed-point formulation $\bw = \cL_k^{-1} \circ \cF_k (\bw)$.}

    Similarly to \cite{ADG2013}, one can check that $ \cL_k: X \rightarrow Y $ is invertible with the inverse $ \inv{\cL}_k: Y \rightarrow X $. Note that $ X $ is not a Banach space since $ \cD(\partial \widetilde{E}) $ consists of inequality constraints. To get a continuous and even compact operator, we introduce for $ 0 < s < \frac{1}{4} $ the following Banach spaces
    \begin{equation*}
    	\widetilde{X} \coloneqq W_{0,\sigma}^{1,2}(\Omega) \times W^{2-s,2}(\Omega) \times W_n^{2,2}(\Omega), \quad 
    	\widetilde{Y} \coloneqq L^{\frac{3}{2}}(\Omega; \bbr^2) \times W^{1,\frac{3}{2}}(\Omega) \times W^{1,2}(\Omega).
    \end{equation*}
Then we obtain the continuity of $ \inv{\cL}_k : Y \rightarrow \widetilde{X} $ from standard theory and with the above note concerning {the continuity of the term $\partial \widetilde{E}(\phi) + \phi $ within the definition of $ \cL_k $}. In view of the Rellich--Kondrachov theorem in three dimensions, one knows that $ \widetilde{Y} \subset \subset Y $ compactly, which implies that the restriction $ \inv{\cL}_k : \widetilde{Y} \rightarrow \widetilde{X} $ is a compact operator. Moreover, we infer that $ \cT_{k+1} : \widetilde{X} \rightarrow Z $ is well-defined due to Theorem \ref{thm:B-equation} and the restriction $ \cF_k : \widetilde{X} \rightarrow \widetilde{Y} $ is continuous and maps bounded sets into bounded sets, which can be verified by the same argument as in \cite{ADG2013}. Here we leave out the details except for the terms regarding $\widetilde\bbb $. 
Thanks to the regularization operator $ \sR_\eta $, we have
	\begin{align*}
		& \norm{\Div \big(\sR_\eta \big[\mu_\eta(\phi_k)(\overline{\bbb}_{k+1} - \bbi)\big]\big)}_{L^\frac{3}{2}} \\
			& \qquad \qquad \leq C \frac{1}{h} \int_{I_{k+1}} (\normm{\widetilde{\bbb}}_{L^2}^2 
		+ \normm{\nabla \widetilde{\bbb}}_{L^2}^2) \dt + \norm{\phi_k}_{L^2}^2 + \norm{\nabla \phi_k}_{L^2}^2 + C
		\leq C_k(1 + \frac{1}{h}), \\
		& \frac{1}{h} \int_{I_{k+1}} \norm{\sR_\eta \Big[\frac{\mu_\eta(\phi_k)}{2} \nabla \tr(\widetilde{\bbb} - \ln \widetilde{\bbb} - \bbi)\Big]}_{L^\frac{3}{2}} \dt \\
			& \qquad \qquad \leq C \frac{1}{h} \int_{I_{k+1}} \norm{\nabla \tr(\widetilde{\bbb} - \ln \widetilde{\bbb})}_{L^2}^2 \dt + C
		\leq C_k(1 + \frac{1}{h}), \\
		& \norm{G(\bw, \widetilde{\bbb})}_{W^{1,2}} 
			\leq C  \frac{1}{h} \int_{I_{k+1}} \norm{\tr(\widetilde{\bbb} - \ln \widetilde{\bbb} - \bbi)}_{W^{1,2}} \dt
		\leq C_k(1 + \frac{1}{h}),
	\end{align*}
    where $ C_k = C_k(\normm{\bu}_{W^{1,2}}) > 0 $. Hence, $\cF_k$ maps bounded sets into bounded sets.

Next we record the continuity of $\cF_k$, that is, $\cF_k(\bw^\ell) \to \cF_k(\bw)$ in $\widetilde{Y}$ if $\bw^\ell \to \bw$ in $\widetilde{X}$ as $\ell \to \infty$. Note that most of the terms of $\cF_k$ are similar to \cite{ADG2013}, and the main differences are the terms corresponding to $ \widetilde{\bbb} $. In order to prove the continuity, we take an arbitrary sequence $ \{\bw^\ell\}_{\ell \in \bbn} \subset \widetilde{X} $ such that $\bw^\ell \to \bw$ in $\widetilde X$, as $ \ell \to \infty $, which implies $\bu^\ell\to\bu$ in $W_{0,\sigma}^{1,2}$, and then we investigate the continuity of $\cT_{k+1}$ in terms of $\bu^\ell\in W_{0,\sigma}^{1,2}$ for fixed $\phi_k$ and $\bbb_k$. It follows from Theorem \ref{thm:B-equation} and \eqref{eqs:B-uniqueness-estimate} that $\bbb^\ell \to \bbb$ in $L^2(\Omega \times I_{k+1})$ and $\nabla \bbb^\ell \to \nabla \bbb$ in $L^2(\Omega \times I_{k+1})$. By applying a similar argument as in Section \ref{sec:B-positive-delta}, one is able to show the positive definiteness of $\bbb$ and $\tr \ln \bbb^\ell \to \tr \ln \bbb$ a.e.~ in $\Omega \times I_{k+1}$ (up to a non-relabeled subsequence). Then, with the uniform boundedness of $\tr\ln\bbb^\ell$ in $L^2(\Omega \times I_{k+1})$, one concludes that $\tr\ln\bbb^\ell \to \tr\ln\bbb$ strongly in $L^1(\Omega \times I_{k+1})$ in view of Vitali's convergence theorem. Hence, $\tr(\bbb^\ell-\ln\bbb^\ell-\bbi) \to \tr(\bbb-\ln\bbb-\bbi)$ in $L^1(\Omega \times I_{k+1})$, which finally implies the continuity of $ \cF_k $.

    {\textit{\uline{Application of a fixed-point theorem.}}}
    The final step is to employ the Leray--Schauder principle on $ \widetilde{Y} $, for which we denote $ \cK_k \coloneqq \cF_k \circ \inv{\cL}_k : \widetilde{Y} \rightarrow \widetilde{Y} $, rewrite \eqref{eqs:LF} as
	\begin{equation*}
		\mathbf{f} - \cK_k(\mathbf{f}) = 0 \quad 
		\text{for } \mathbf{f} = \cL_k(\bw),
	\end{equation*}
	and find a fixed point of $ \cK_k $. Note that $ \cK_k $ is a compact operator because $ \inv{\cL}_k $ is compact and $ \cF_k $ is continuous. {Now we are in a position to apply the Leray--Schauder principle (see, e.g., \cite[Lemma 3.1.1, Chapter II]{Sohr2001}) to find a fixed point. Namely, we will show that} 
	\begin{equation}
		\label{eqs:Leray-Schauder-f}
		\exists R > 0 \text{ such that } \norm{\mathbf{f}}_{\widetilde{Y}} \leq R, \text{ for } \mathbf{f} \in \widetilde{Y} \text{ fulfilling } \mathbf{f} = \lambda \cK_k(\mathbf{f}) \text{ for some } 0 \leq \lambda \leq 1.
	\end{equation}
 Here, $ \lambda $ is a constant used for the proof and is unrelated to the relaxation time of the model \eqref{eqs:Model}. By the definition $ \bw = \inv{\cL}_k(\mathbf{f}) $, one may see that $ \mathbf{f} = \lambda \cK_k(\mathbf{f}) $ in \eqref{eqs:Leray-Schauder-f} is equivalent to 
	\begin{equation}
		\label{eqs:Leray-Schauder-w}
		\cL_k(\bw) - \lambda \cF_k(\bw) = 0 \quad \text{for some } 0 \leq \lambda \leq 1.
	\end{equation}
	The rest of this subsection is devoted to the estimate of $ \bw $ in $ \widetilde{X} $ satisfying \eqref{eqs:Leray-Schauder-w}, which thereby gives the estimate of $ \mathbf{f} $ in $ \widetilde{Y} $ by the fact that $ \cF_k : \widetilde{X} \rightarrow \widetilde{Y} $ is bounded. The subsequent argument is rather lengthy but is very similar to \cite{ADG2013}. Thus, we point out the main differences associated with $ \widetilde\bbb $ in the following. For the reader's convenience, we give the equivalent weak formulation corresponding to \eqref{eqs:Leray-Schauder-w} as follows.
	\begin{subequations}
		\label{eqs:regularized-discret-lambda}
		\begin{equation}
			\label{eqs:AGG-discret-u-lambda}
			\begin{aligned}
				& \lambda \inner{\frac{\rho \bu - \rho_k \bu_k}{h}}{\bv}
				+ \lambda \inner{\Div (\rho_k \bu \otimes \bu)}{\bv} 
				+ \lambda \inner{\Div (\bu \otimes \bJ)}{\bv} \\
				& \qquad + \inner{\nu(\phi_k)(\nabla \bu + \tran{\nabla} \bu)}{\nabla \bv} 
				- \lambda \inner{\Div \sR_\eta \big[\mu_\eta(\phi_k)(\overline{\bbb}_{k+1} - \bbi)\big]}{\bv} \\
				& \qquad = \lambda \inner{q \nabla \phi_k}{\bv}
				+ \lambda \frac{1}{h} \int_{I_{k+1}} \inner{\sR_\eta \Big[\frac{\mu_\eta(\phi_k)}{2} \nabla \tr(\widetilde{\bbb} - \ln \widetilde{\bbb} - \bbi)\Big]}{\bv} \dt,
			\end{aligned}
		\end{equation}
		for all $ \bv \in W_{0,\sigma}^{1,2}(\Omega) $,
  and for a.e.~$ x \in \Omega $,
		\begin{gather}
			\label{eqs:AGG-discret-phi-lambda}
			\lambda \frac{\phi - \phi_k}{h} + \lambda \bu \cdot \nabla \phi_k = \Div(m(\phi_k) \nabla q), \\
			\label{eqs:AGG-discret-q-lambda}
			\begin{aligned}
				\lambda q + \lambda \omega \frac{\phi + \phi_k}{2} + \lambda \phi & = \partial \widetilde{E}(\phi)
				+ \phi
				+ \lambda \eta \frac{\phi - \phi_k}{h}
				+ \lambda G(\bw, \widetilde{\bbb}).
			\end{aligned}
		\end{gather}
	\end{subequations}
	By the analogous test procedure as for the energy estimate (also in \cite{ADG2013}), that is, testing \eqref{eqs:AGG-discret-u-lambda} with $ \bu $, \eqref{eqs:AGG-discret-phi-lambda} with $ q $ and \eqref{eqs:AGG-discret-q-lambda} with $ \frac{\phi - \phi_k}{h} $, we derive similar estimates. The main difference here compared to \cite{ADG2013} is that we have extra terms associated with $ \widetilde\bbb $, which in light of \eqref{eqs:B-identity-discrete} can be canceled, as
	\begin{align*}
		& \quad \lambda \int_{I_{k+1}} \int_\Omega \mu_\eta(\phi_k) (\widetilde{\bbb} - \bbi) : \nabla \sR_\eta \bu \dxdt
			- \lambda \int_{I_{k+1}} \int_\Omega \frac{\mu_\eta(\phi_k)}{2} \sR_\eta \bu \cdot \nabla \tr(\widetilde{\bbb} - \ln \widetilde{\bbb} - \bbi) \dxdt \\
		    & \quad + \lambda \int_{I_{k+1}} \int_\Omega \frac{\sR_\eta (\phi - \phi_k)}{2 h} \frac{\mu_\eta(\phi) - \mu_\eta(\phi_k)}{\sR_\eta (\phi - \phi_k)} \tr(\widetilde{\bbb} - \ln \widetilde{\bbb} - \bbi)(t) \dxdt \\
		& \geq \lambda \frac{1}{h} \int_{I_{k+1}} \int_\Omega \frac{\mu_\eta(\phi)}{2} \tr(\widetilde{\bbb} - \ln \widetilde{\bbb} - \bbi)(t) \dxdt
			- \lambda \int_\Omega \frac{\mu_\eta(\phi_k)}{2} \tr(\overline{\bbb}_k - \ln \overline{\bbb}_k - \bbi) \dx \\
			& \quad + \lambda \int_{I_{k+1}} \int_\Omega \frac{\mu_\eta(\phi_k)}{2} \tr(\widetilde{\bbb} + \widetilde{\bbb}^{-1} - 2\bbi) \dxdt
			+ \lambda \frac{\kappa}{2} \int_{I_{k+1}} \int_\Omega \abs{\nabla \tr \ln \widetilde\bbb}^2  \dxdt.
	\end{align*}
	Then we reach the estimate
	\begin{equation*}
		\norm{\bw}_{\widetilde{X}}
		+ \norm{\partial \widetilde{E}(\phi)}_{L^2}
		\leq C_k,
	\end{equation*}
	for fixed $ h $, and hence
	\begin{equation*}
		\norm{\mathbf{f}}_{\widetilde{Y}}
		= \norm{\lambda \cF_k(\bw)}_{\widetilde{Y}}
		\leq C_k(\norm{\bw}_{\widetilde{X}} + 1)
		\leq C_k,
	\end{equation*}
	which finishes the proof of Proposition \ref{prop:existence-hybrid} by the Leray--Schauder principle.
\end{proof}

\subsection{Construction of Approximate Solutions}
\label{sec:construction-approx}
Let $T>0$ and $ N \in \bbn $ be given and, for $k\in\{1,...,N\}$, let $ (\bu_k, \phi_k, q_k, \bbb_k) $ be chosen successively as a solution to \eqref{eqs:regularized-discret} with $ h = \frac{T}{N} $ and $ (\bu_0, \phi_0^N, \bbb_0) $ as the initial data. 
Here, the regularized initial value $ \phi_0^N \in W^{2,2}(\Omega) $ is constructed as in \cite{ADG2013} and satisfies $ \phi_0^N \rightarrow \phi_0 $ in $ W^{1,2}(\Omega) $, as $ N \rightarrow \infty $. 

Now we define $ f^N(t) $ on $ [-h, T) $ through
\begin{equation*}
	f^N(t) = f_k \quad \text{for }  t \in [t_{k-1}, t_k),
\end{equation*}
where $ k \in \{0,...,N\}$ and $ f \in \{\bu, \phi, q, \bbb\} $. In particular, it holds that
\begin{equation*}
	f^N((k - 1)h) = f_k, \quad 
	f^N(kh) = f_{k + 1}, \quad
	f^N(t) = f_{k + 1} \text{ with } t \in [t_k, t_{k+1}), k\in\{0,...,N-1\}.
\end{equation*}
Moreover, for $ t \in [t_{k-1}, t_k) $, $k\in\{1,...,N\}$, we define $ \widetilde{\bbb}^N(t) \coloneqq \widetilde{\bbb}_k(t) $, $ \overline{\bbb}^N(t) \coloneqq \frac{1}{h} \int_{I_k} \widetilde{\bbb}_k(t) \dt $ and
\begin{gather*}
	\rho^N \coloneqq \rho(\phi^N), \quad 
	f_h \coloneqq f(t - h), \\
	(\Delta_h^+ f)(t) \coloneqq f(t + h) - f(t), \quad 
	\ptial{t,h}^+ f(t) \coloneqq \frac{1}{h} (\Delta_h^+ f)(t), \\
	(\Delta_h^- f)(t) \coloneqq f(t) - f(t - h), \quad 
	\ptial{t,h}^- f(t) \coloneqq \frac{1}{h} (\Delta_h^- f)(t).
\end{gather*}
By definition, it follows that
\begin{equation}
    \label{eqs:identity-integral-summation}
    \int_0^\tau f^N(t) \dt = h \sum_{k=0}^{\tau/h} f_{k+1}, \quad
    \int_0^\tau \widetilde{\bbb}^N(t) \dt
    = \sum_{k=0}^{\tau/h} \int_{I_{k+1}} \widetilde{\bbb}_{k+1}(t) \dt,
\end{equation}
{for $\tau=nh$, $n\in \{0,...,N-1\}$.} 

\begin{subequations}
    \label{eqs:model-reg-discrete-formulation}
Then for arbitrary $ \bw \in C^\infty([0,T]; C_0^\infty(\Omega; \bbr^d)) $ with $ \Div \bw = 0 $ we shall take $ \widetilde{\bw} \coloneqq \int_{kh}^{(k + 1)h} \bw \dt $ satisfying $ \Div \widetilde{\bw} = 0 $ as the test function in \eqref{eqs:AGG-discret-u} and sum over $ k \in \{0,...,N-1\} $ to get 
\begin{align}
	& - \int_0^\tau \int_\Omega \Big( \rho^N \bu^N \cdot \ptial{t,h}^+ \bw 
	- (\rho_h^N \bu^N \otimes \bu^N) : \nabla \bw 
	\Big) \dxdt 
	\nonumber \\
	& \qquad + \int_0^\tau \int_\Omega \Big( 
	\nu(\phi_h^N) (\nabla \bu^N + \tran{\nabla} \bu^N) : \nabla \bw 
	- (\bu^N \otimes \bJ^N) : \nabla \bw 
	\Big) \dxdt 
	\nonumber \\
	& \qquad + \int_0^\tau \int_\Omega \sR_\eta \big[\mu_\eta(\phi_h^N)(\overline{\bbb}^N - \bbi)\big] : \nabla \bw \dxdt
	\label{eqs:AGG-discrete-u-formulation} \\
	& = \int_0^\tau \int_\Omega q^N \nabla \phi_h^N \cdot \bw \dxdt
	+ \int_0^\tau \int_\Omega \sR_\eta \Big[\frac{\mu_\eta(\phi_h^N)}{2} \nabla G^N\Big] \cdot \bw \dxdt \nonumber 
	\\
	& \qquad - \int_\Omega \rho(\phi^N(\cdot, \tau-h)) \bu^N(\cdot, \tau-h) \cdot \overline{\bw}_\tau \dx
	+ \int_\Omega \rho(\phi_0^N) \bu_0 \cdot \overline{\bw}_0 \dx, \nonumber
\end{align}
for all $ \tau \in h\cdot\{0,...,N-1\}$, where
\begin{equation*}
    \overline{\bw}_s 
    \coloneqq \frac{1}{h} \int_s^{s + h} \bw(t) \dt \quad \text{ for } s \in [0,\tau],
\end{equation*}
and
\begin{equation*}
    G^N(t) \coloneqq \frac{1}{h} \int_{I_{k+1}} \tr(\widetilde{\bbb}_{k+1} - \ln \widetilde{\bbb}_{k+1} - \bbi)(s) \d s \quad \text{ for }  t \in [t_k, t_{k+1}).
\end{equation*}
Here the identity
\begin{align*}
	\int_0^\tau \int_\Omega & \ptial{t,h}^- (\rho^N \bu^N) \cdot \bw \dxdt
	 + \int_0^\tau \int_\Omega \rho^N \bu^N \cdot \ptial{t,h}^+ \bw \dxdt \\
	    &\quad
        = \int_\Omega \rho(\phi^N(\cdot, \tau-h)) \bu^N(\cdot, \tau-h) \cdot \overline{\bw}_\tau \dx
         - \int_\Omega \rho(\phi_0^N) \bu_0 \cdot \overline{\bw}_0 \dx
\end{align*}
is employed for all $\tau \in h\cdot\{0,...,N-1\}$ and $ \bw \in C^\infty([0,T]; C_0^\infty(\Omega; \bbr^d)) $. Analogously, it follows from \eqref{eqs:AGG-discret-phi} and \eqref{eqs:AGG-discret-q} that
\begin{align}
	\label{eqs:AGG-discrete-phi-formulation} 
	- \int_0^\tau \int_\Omega & \Big( \phi^N \ptial{t,h}^+ \xi 
	    + \phi_h^N \bu^N \cdot \nabla \xi \Big) \dxdt \\
	    & = - \int_0^\tau \int_\Omega m(\phi_h^N) \nabla q^N \cdot \nabla \xi \dxdt
	    - \int_\Omega \phi^N(\cdot, \tau-h) \overline{\xi}_\tau \dx
	    + \int_\Omega \phi_0^N \overline{\xi}_0 \dx
	    \nonumber
\end{align}
for all $ \tau \in h\cdot\{0,...,N-1\} $ and $ \xi \in C^\infty([0,T]; C^1(\overline{\Omega})) $, where
\begin{equation*}
    \overline{\xi}_s 
    \coloneqq \frac{1}{h} \int_s^{s + h} \xi(t) \dt \quad \text{ for } s \in [0,\tau],
\end{equation*}
In addition,
\begin{equation}
    \begin{aligned}
        \partial \widetilde{E}(\phi^N) = \widetilde{W}_0'(\phi^N) - \Delta \phi^N 
        & = q^N + \frac{\omega}{2}(\phi^N + \phi_h^N)
            - \eta \partial_{t,h}^{-} \phi^N
            \\
			& \quad
			- \onehalf \sR_\eta \Big[\frac{\mu_\eta(\phi^N) - \mu_\eta(\phi_h^N)}{\sR_\eta (\phi^N - \phi_h^N)} G^N \Big]
    \end{aligned}
	\label{eqs:AGG-discrete-q-formulation}
\end{equation}
for a.e.~$ x\in\Omega $, and all $ \tau \in h\cdot\{0,...,N-1\} $.
Moreover, testing \eqref{eqs:B-equation-discrete} with any $ \bbc \in C^\infty(\overline{Q_T}; \bbr_{\mathrm{sym}}^{d \times d}) $ and integrating over $\Omega \times I_{k+1}$ with $k\in\{0,...,N-1\}$, applying integration by parts (for the time derivative) and summing over all $k\in\{0,...,m\}$ with $m\in\{0,...,N-1\}$ and $\tau=hm$, one knows
\begin{align}
	\int_0^\tau & \int_\Omega \Big( \widetilde{\bbb}^N : \pt \bbc 
		{- (\sR_\eta \bu^N \cdot\nabla) \widetilde{\bbb}^N : \bbc} \Big) \dxdt \nonumber\\
	& \quad + \int_0^\tau \int_\Omega 2 \bbc \widetilde{\bbb}^N : \nabla \sR_\eta \bu^N
		- \kappa \nabla \widetilde{\bbb}^N : \nabla \frac{\bbc}{\mu_\eta(\phi_h^N)} \Big) \dxdt 
	\label{eqs:B-discrete-formulation} \\
	& = \int_0^\tau \int_\Omega \big( \widetilde{\bbb}^N : \bbc - \tr \bbc \big) \dxdt 
	    + \int_\Omega \widetilde{\bbb}^N(\cdot, \tau) : \bbc(\cdot, \tau) \dx
	    - \int_\Omega \bbb_0 : \bbc(\cdot, 0).
	\nonumber
\end{align}
\end{subequations}

Now let $ \cE^N(t) $ be the piecewise linear interpolant of
\begin{equation*}
	 \cE_{tot}(\bu_k, \phi_k, \widetilde{\bbb}_k) 
	 \coloneqq \cE_{AGG}(\bu_k, \phi_k) + \frac{1}{h} \int_{I_k} \cE_{B}(\widetilde{\bbb}_k,\phi_k)(s) \,\d s
\end{equation*}
at $ t_k = kh $ given by
\begin{equation*}
	\cE^N(t) 
	= \frac{(k + 1)h - t}{h} \cE_{tot}(\bu_k, \phi_k, \widetilde{\bbb}_k)
		+ \frac{t - kh}{h} \cE_{tot}(\bu_{k + 1}, \phi_{k + 1}, \widetilde{\bbb}_{k + 1})
\end{equation*}
for $ t \in [kh, (k + 1)h) $, satisfying $ \cE^N(0) = \cE_{tot}(\bu_0, \phi_0^N, \bbb_0) \coloneqq \cE_{AGG}(\bu_0, \phi_0^N) + \cE_{B}(\bbb_0,\phi_0^N) $.
By the discrete energy inequality \eqref{eqs:discrete-energy-Notime}, we have
\begin{equation}
	\label{eqs:discrete-energy-interpolation}
	- \ddt \cE^N(t)
	= \frac{\cE_{tot}(\bu_k, \phi_k, \widetilde{\bbb}_k) - \cE_{tot}(\bu_{k + 1}, \phi_{k + 1}, \widetilde{\bbb}_{k + 1})}{h} \geq \cD^N(t),
\end{equation}
where the piecewise constant dissipation $ \cD^N(t) $ is given by
\begin{align*}
	\cD^N(t) & \coloneqq
	\int_\Omega \frac{\nu(\phi_k)}{2} \abs{\nabla \bu_{k + 1} + \tran{\nabla} \bu_{k + 1}}^2 \dx 
	    + \int_\Omega m(\phi_k) \abs{\nabla q_{k + 1}}^2 \dx
	\\
	& \quad + \frac{1}{h} \int_\Omega \rho_k \frac{\abs{\bu_{k + 1} - \bu_k}^2}{2} \dx
        + \frac{1}{h} \int_\Omega \frac{\abs{\nabla \phi_{k + 1} - \nabla \phi_k}^2}{2} \dx 
	    + \eta \int_\Omega \abs{\frac{\phi_{k + 1} - \phi_k}{h}}^2 \dx
	\\
	& \quad 
	    + \frac{\kappa}{d} \frac{1}{h} \int_{I_{k+1}} \int_\Omega \abs{\nabla \tr \ln \widetilde\bbb_{k + 1}}^2 \dx \d s
	    + \frac{1}{h} \int_{I_{k+1}} \int_\Omega \frac{\mu_\eta(\phi_k)}{2} \tr(\widetilde\bbb_{k + 1}
	+ \widetilde\bbb_{k + 1}^{-1} - 2\bbi) \dx \d s
\end{align*}
for $ t \in I_{k+1} $, $ k \in \{0,...,N-1\} $.

\subsection{Existence of Weak Solutions for the Regularized System}
\label{sec:proof-reg} 
Now we are ready to prove Theorem \ref{thm:main-reg} by compactness arguments and limit passages. 

{\textit{\uline{Uniform bounds and convergences.}}}
We obtain the energy inequality for the approximate solution $ (\bu^N, \phi^N, q^N, \widetilde{\bbb}^N) $
by integrating \eqref{eqs:discrete-energy-interpolation} over $ I_{k+1} $ and summing over $ k = 0,..., m $, where $m\in\{0,...,N-1\}$ and $\tau=hm$, together with \eqref{eqs:identity-integral-summation},
\begin{equation}
	\label{eqs:model-discrete-energy-dissipation}
	\begin{aligned}
		& \cE_{AGG}(\phi^N(\tau), \bu^N(\tau))
		+ \frac{1}{h} \int_{\tau - h}^{\tau} \cE_{B}(\widetilde{\bbb}^N(t), \phi^N(t)) \dt
		\\
		&\qquad 
		+ \frac{1}{2 h} \int_0^\tau \int_\Omega \Big( \rho_h^N \abs{\bu^N - \bu_h^N}^2  + \abs{\nabla\phi^N - \nabla\phi_h^N}^2 \Big) \dxdt
		\\
		& \qquad+ \int_0^\tau \int_\Omega \Big( \frac{\nu(\phi_h^N)}{2} \abs{\nabla \bu^N + \tran{\nabla} \bu^N}^2 
		+ m(\phi_h^N) \abs{\nabla q^N}^2 
		+ \eta \abs{\ptial{t,h}^- \phi^N}^2 \Big) \dxdt
		\\
		& \qquad
		+ \int_0^\tau \int_\Omega \Big( \frac{\kappa}{d} \abs{\nabla \tr \ln \widetilde{\bbb}^N}^2 
		+ \frac{\mu_\eta(\phi_h^N)}{2} \tr(\widetilde\bbb^N
		+ (\widetilde\bbb^N)^{-1} - 2\bbi) 
		\Big) \dxdt 
		\\
		& \quad \leq {\cE_{AGG}(\phi^N_0, \bu_0) 
		+ \cE_{B}(\bbb_0, \phi^N_0)}
	\end{aligned}
\end{equation}
which induces the boundedness of certain norms. 
However, even with these uniform bounds, the limit passing is still not possible for $ N \rightarrow \infty $. The reason is that, for $ \widetilde{\bbb}^N $, there is no compactness available at the moment (only {$ \normm{\tr \, \bbb^N}_{L^1} $}). 
To overcome this problem, we recall the estimate \eqref{eqs:weak-B-energyestimate} derived in Section \ref{sec:B}, namely,
\begin{equation}
	\label{eqs:B-discrete-tilde}
	\begin{aligned}
		& \norm{\tr(\widetilde{\bbb}^N - \ln \widetilde{\bbb}^N)(\tau)}_{L^1}
		+ \norm{\widetilde{\bbb}^N(\tau)}_{L^2}^2 
		\\
		& \quad + \int_0^\tau \Big(\norm{\widetilde{\bbb}^N(t)}_{L^2}^2
		+ \kappa \norm{\nabla \widetilde{\bbb}^N(t)}_{L^2}^2\Big) \dt 
		\\
		& \quad + \int_0^\tau \Big(\norm{\tr(\widetilde{\bbb}^N
			+ \inv{(\widetilde{\bbb}^N)} - 2\bbi)(t)}_{L^1} 
		+ \kappa \norm{\nabla \tr \ln \widetilde{\bbb}^N(t)}_{L^2}^2\Big) \dt 
		\\
		& \leq C\Big({\cE_{B}(\bbb_0, \phi^N_0)}, \norm{\bbb_0}_{L^2}\Big)
	\end{aligned}
\end{equation}
for all $ \tau \in (0,T) $, where $ C > 0 $ depends on $ \eta > 0 $ and certain norms of $ (\bu^N, \phi^N) $, which are bounded uniformly regarding $ N $ due to \eqref{eqs:model-discrete-energy-dissipation}. The right-hand side of \eqref{eqs:B-discrete-tilde} is also uniformly bounded. Combining \eqref{eqs:model-discrete-energy-dissipation} and \eqref{eqs:B-discrete-tilde}, we obtain the following uniform bounds on $ N $ (resp. $h$): 
\begin{alignat*}{3}
	\bu^N & \quad \text{ is bounded in } \quad && L^2(0,T; W_0^{1,2}(\Omega; \bbr^d)) \text{ and } L^\infty(0,T; L_\sigma^2(\Omega)), \\
	\nabla q^N & \quad \text{ is bounded in } \quad && L^2(0,T; L^2(\Omega; \bbr^d)), \\
	\phi^N & \quad \text{ is bounded in } \quad && L^\infty(0,T; W^{1,2}(\Omega)), \\
	\widetilde{\bbb}^N & \quad \text{ is bounded in } \quad && L^2(0,T; W^{1,2}(\Omega; \bbr_{\mathrm{sym}}^{d \times d})) \text{ and } L^\infty(0,T; L^2(\Omega; \bbr_{\mathrm{sym}}^{d \times d})), \\
 \tr \ln \widetilde{\bbb}^N & \quad \text{ is bounded in } \quad && L^\infty(0,T; L^1(\Omega)), \\
	\nabla \tr \ln \widetilde{\bbb}^N & \quad \text{ is bounded in } \quad && L^2(0,T; L^2(\Omega; \bbr^d)), \\
	\sqrt{\eta} \ptial{t,h}^- \phi^N & \quad \text{ is bounded in } \quad && L^2(0,T; L^2(\Omega)),
\end{alignat*}
and by 
\eqref{eqs:subdifferential-estimate} and 
Lemma \ref{lem:convex-potential-L1-q},
\begin{gather*}
    \begin{alignedat}{3}
	    \phi^N & \quad \text{ is bounded in } \quad && L^2(0,T; W^{2,2}(\Omega)), \\
	    \widetilde{W}_0'(\phi^N) & \quad \text{ is bounded in } \quad && L^2(0,T; L^2(\Omega)), 
    \end{alignedat} \\
	\int_0^T \abs{\int_\Omega q^N \dx} \dt \leq  M(T),
\end{gather*}
for a certain monotone function $ M: \bbr^+ \rightarrow \bbr^+ $.
Up to a subsequence (not to be relabeled), one concludes the following convergences:
\begin{alignat*}{4}
	\bu^N & \rightarrow \bu, \quad && \text{weakly} \quad && \text{in } L^2(0,T; W_0^{1,2}(\Omega; \bbr^d)), \\
	\bu^N & \rightarrow \bu, \quad && \text{weakly-}* \quad && \text{in } L^\infty(0,T; L_\sigma^2(\Omega)) \cong [L^1(0,T; L_\sigma^2(\Omega))]', \\
	\phi^N & \rightarrow \phi, \quad && \text{weakly} \quad && \text{in } L^2(0,T; W^{2,2}(\Omega)), \\
	\phi^N & \rightarrow \phi, \quad && \text{weakly-}* \quad && \text{in } L^\infty(0,T; W^{1,2}(\Omega)) \cong { [L^1(0,T; (W^{1,2}(\Omega))')]', } \\
	\sqrt{\eta} \ptial{t,h}^- \phi^N & \rightarrow \sqrt{\eta} \pt \phi, \quad && \text{weakly} \quad && \text{in } L^2(0,T; L^2(\Omega)), \\
	q^N & \rightarrow q, \quad && \text{weakly} \quad && \text{in } L^2(0,T; W^{1,2}(\Omega)), \\
	\widetilde{\bbb}^N & \rightarrow \bbb, \quad && \text{weakly} \quad && \text{in } L^2(0,T; W^{1,2}(\Omega; \bbr_{\mathrm{sym}}^{d \times d})), \\
	\widetilde{\bbb}^N & \rightarrow \bbb, \quad && \text{weakly-}* \quad && \text{in } L^\infty(0,T; L^2(\Omega; \bbr_{\mathrm{sym}}^{d \times d})) \cong [L^1(0,T; L^2(\Omega; \bbr_{\mathrm{sym}}^{d \times d}))]', \\
	\nabla \tr \ln \widetilde{\bbb}^N & \rightarrow \overline{\nabla \tr \ln \bbb}, \quad && \text{weakly} \quad && \text{in } L^2(0,T; L^2(\Omega; \bbr^d)).
\end{alignat*}
Here the $ L^2(\Omega) $-convergence of $ q^N $ is derived by the Poincar\'e--Wirtinger inequality together with the integrability from above.

{\textit{\uline{Compactness for $\widetilde{\bbb}^N$.}}}
Recall the definition of $\widetilde{\bbb}^N(t) = \widetilde{\bbb}_{k+1}(t)$ for $ t \in [t_{k}, t_{k+1}) $, $k\in\{0,...,N-1\}$, with $\widetilde{\bbb}_{k+1}(t)$ defined by \eqref{eqs:B-equation-discrete}.
Thanks to the boundedness of $ \bu^N, \phi^N $, the mollifier $ \sR_\eta $ and the weak formulation \eqref{eqs:B-discrete-formulation} restricted to each time interval $I_{k+1}$, $k\in\{0,...,N-1\}$, we get 
\begin{alignat}{3}
\label{eqs:dtB_k+1}  
    &\sum_{k=0}^{N-1} \int_{I_{k+1}} \norm{\pt \widetilde{\bbb}_{k+1}(t)}_{[W^{1,2}]'}^2 \dt
    \\
    \nonumber  
    &\leq C(\eta) \Big( \norm{\bu^N}_{L^2(0,T; W^{1,2})}^2 \norm{\widetilde\bbb^N}_{L^\infty(0,T; L^2)}^2
    + \norm{\widetilde\bbb^N}_{L^2(0,T; W^{1,2})}^2 
    \big( 1 + \norm{\nabla\phi_h^N}_{L^\infty(0,T; L^2)}^2 \big) + 1 \Big),
\end{alignat}
where $C(\eta)>0$ depends on $\eta$ but not on $N\in\bbn$.
This allows us to employ a time translation compactness argument for $\widetilde\bbb^N$, that is,
\begin{align*}
    &\int_0^{T-h} \norm{\widetilde\bbb^N(t+h) - \widetilde\bbb^N(t)}_{[W^{1,2}]'}^2 \dt
    \\
    & \quad = \sum_{k=0}^{N-2} \int_{I_{k+1}} \norm{\widetilde\bbb_{k+2} (t+h) - \widetilde\bbb_{k+1} (t)}_{[W^{1,2}]'}^2 \dt
    \\
    & \quad \leq \sum_{k=0}^{N-2} \int_{I_{k+1}} 2 \Big( \norm{\widetilde\bbb_{k+2} (t+h) - \overline\bbb_{k+1} (t)}_{[W^{1,2}]'}^2
    + \norm{\overline\bbb_{k+1} (t) - \widetilde\bbb_{k+1} (t)}_{[W^{1,2}]'}^2 \Big) \dt
    \\
    & \quad = 2 \sum_{k=1}^{N-1} \int_{I_{k+1}}  \norm{\widetilde\bbb_{k+1} (t) - \widetilde\bbb_{k+1} (t_{k})}_{[W^{1,2}]'}^2 \dt
    + 2 \sum_{k=0}^{N-2} \int_{I_{k+1 }}\norm{\overline\bbb_{k+1} (t) - \widetilde\bbb_{k+1} (t)}_{[W^{1,2}]'}^2  \dt,
\end{align*}
where $\overline{\bbb}_{k+1}(t) = \frac{1}{h} \int_{I_{k+1}} \widetilde\bbb_{k+1}(s)\,\d s$, as defined in Section \ref{sec:time-discretization}. 
For the first term, using the fundamental theorem of calculus, Jensen's inequality and \eqref{eqs:dtB_k+1}, we calculate 
\begin{align*}
    \sum_{k=1}^{N-1} \int_{I_{k+1}}  \norm{\widetilde\bbb_{k+1} (t) - \widetilde\bbb_{k+1} (t_{k})}_{[W^{1,2}]'}^2 \dt 
    \leq h^2 \sum_{k=1}^{N-1} \int_{I_{k+1}}  \norm{\partial_t \widetilde\bbb_{k+1} (t)}_{[W^{1,2}]'}^2 \dt 
    \leq Ch^2,
\end{align*}
where $C$ does not depend on $N\in\bbn$. For the second term, we also use Jensen's inequality, the fundamental theorem of calculus and \eqref{eqs:dtB_k+1} to get
\begin{align*}
    &\sum_{k=0}^{N-2} \int_{I_{k+1 }}\norm{\overline\bbb_{k+1} (t) - \widetilde\bbb_{k+1} (t)}_{[W^{1,2}]'}^2  \dt
    \\
    & \quad \leq 
    \frac1h \sum_{k=0}^{N-2} \int_{I_{k+1 }} \int_{I_{k+1 }}\norm{\widetilde\bbb_{k+1} (s) - \widetilde\bbb_{k+1} (t)}_{[W^{1,2}]'}^2  \,\d s \dt 
    \\
    & \quad \leq C h^2 \sum_{k=0}^{N-2} \int_{I_{k+1}}  \norm{\partial_t \widetilde\bbb_{k+1} (t)}_{[W^{1,2}]'}^2 \dt
    \\
    & \quad \leq C h^2,
\end{align*}
where $C$ is independent of $N\in\bbn$.
Then, one concludes that
\begin{align*}
    \int_0^{T-h} \norm{\widetilde\bbb^N(t+h) - \widetilde\bbb^N(t)}_{[W^{1,2}]'}^2 \dt
    \to 0,
\end{align*}
as $N\to \infty$ ($h\to 0$, respectively).
By virtue of Lemma \ref{lem:compactness_translation}, one obtains the strong convergences
\begin{alignat*}{4}
	\widetilde{\bbb}^N & \rightarrow \bbb, \quad && \text{strongly} \quad && \text{in } L^2(0,T; L^p(\Omega; \bbr_{\mathrm{sym}}^{d \times d})), \quad 2 \leq p < { \frac{2d}{d-2}},
	\\
	\widetilde{\bbb}^N & \rightarrow \bbb, \quad && \text{a.e.} \quad && \text{in } Q_T.
\end{alignat*}
Proceeding in a similar fashion as in Section \ref{sec:B-positive-delta} together with the Poincar\'e--Wirtinger inequality leads to
\begin{align*}
	& \bbb \text{ is positive definite a.e.~in } Q_T, 
	 \\
	& \tr \ln \widetilde{\bbb}^N \rightarrow \tr \ln \bbb, \quad \text{weakly} \quad \text{in } L^2(0,T; W^{1,2}(\Omega)),
\end{align*}
It follows from the continuity of $ \tr \ln (\cdot) $ that
\begin{alignat}{4}
	\label{eqs:trlnBN-a.e.-convergence}
	\tr \ln \widetilde{\bbb}^N & \rightarrow \tr \ln \bbb, \quad && \text{a.e.} \quad && \text{in } Q_T.
\end{alignat}
Arguing as in Section \ref{sec:B-proof} yields
\begin{equation*}
    \bbb \in C_w([0,T]; L^2(\Omega; \bbr_{\mathrm{sym}}^{d \times d})).
\end{equation*}

{\textit{\uline{Compactness for $\widetilde{\phi}^N$.}}}
Now let $ \widetilde{\phi}^N $ be the piecewise linear interpolant of $ \phi^N(kh) $, $ k \in \{0,...,N\}$, i.e., $ \widetilde{\phi}^N = \frac{1}{h} \chi_{[0,h]} *_t \phi^N $, where the convolution only happens regarding time variable $ t $. Then it follows that $ \pt \widetilde{\phi}^N = \ptial{t,h}^- \phi^N $ and, for a.e.~$t\in(0,T)$, 
\begin{equation}
    \label{eqs:phi^N-tilde}
    \normm{\widetilde{\phi}^N - \phi^N}_{[W^{1,2}(\Omega)]'}
    \leq C h \normm{\pt \widetilde{\phi}^N}_{[W^{1,2}(\Omega)]'}.
\end{equation}
In view of the weak formulation \eqref{eqs:AGG-discrete-phi-formulation} and the boundedness of $ (\bu^N, \phi^N, q^N) $ in {appropriate} spaces, we see that 
\begin{alignat*}{3}
	\pt \widetilde{\phi}^N & \quad \text{ is bounded in } \quad && L^2(0,T; [W^{1,2}(\Omega)]'),
\end{alignat*}
which, together with Aubin--Lions thereby implies the strong convergence 
\begin{alignat*}{4}
	\widetilde{\phi}^N & \rightarrow \widetilde{\phi}, \quad && \text{strongly} \quad && \text{in } L^2(0,T; W^{1,p}(\Omega)), \quad &&2 \leq p < { \frac{2d}{d-2} }, \\
	\widetilde{\phi}^N & \rightarrow \widetilde{\phi}, \quad && \text{strongly} \quad && \text{in } C([0,T]; L^p(\Omega)), \quad &&2 \leq p < { \frac{2d}{d-2} }, \\
	\widetilde{\phi}^N & \rightarrow \widetilde{\phi}, \quad && \text{a.e.} \quad && \text{in } Q_T,
\end{alignat*}
for some $ \widetilde{\phi} \in L^\infty(0,T; W^{1,2}(\Omega)) \cap L^2(0,T; W^{2,2}(\Omega)) $, thanks to the boundedness of $ \widetilde{\phi}^N $, which can be derived from that of $ \phi^N $. Note that \eqref{eqs:phi^N-tilde} indicates that
\begin{equation*}
    \widetilde{\phi}^N - \phi^N \rightarrow 0 \quad \text{in } L^2(0,T; [W^{1,2}(\Omega)]'), \quad \text{as } N \rightarrow \infty,
\end{equation*}
which gives $ \widetilde{\phi} = \phi $. Then we have $ \pt \phi \in L^2(0,T; [W^{1,2}(\Omega)]') $ and hence 
\begin{equation*}
    \phi \in C_w([0,T]; W^{1,2}(\Omega))
\end{equation*}
due to Lemma \ref{lem:C_w}. Next, we verify the identity $ \phi(0) = \phi_0 $, which can be recorded from
\begin{equation*}
    \widetilde{\phi}^N(0) \rightarrow \widetilde{\phi}(0) = \phi(0), \quad \text{strongly} \text{ in } L^p(\Omega),
\end{equation*}
and the fact that $ \widetilde{\phi}^N(0) = \phi_0^N $ with $ \phi_0^N \rightarrow \phi_0 $ in $ W^{1,2}(\Omega) $. 
In addition, it holds
\begin{equation*}
    \int_\Omega \phi^N(\cdot,\tau - h) \overline{\xi}_\tau \dx
    + \int_\Omega \phi_0^N \overline{\xi}_0 \dx
    \to \int_\Omega \phi(\cdot,\tau) \xi(\cdot,\tau) \dx
    + \int_\Omega \phi_0 \xi(\cdot,0) \dx, 
\end{equation*}
as $N\to \infty$ (resp. $h \to 0$) for a.e.~$\tau \in (0,T)$, concerning the weak convergence of $\phi^N(\tau)$, and strong convergence of $\overline{\xi}_\tau \to \xi(\tau)$ in $L^2(\Omega)$ for fixed $\tau$.
Moreover, the continuity of $ m(\cdot) $, $ \nu(\cdot) $, $ \mu(\cdot) $ and the almost everywhere convergence of $ \phi^N $ yield
\begin{alignat*}{4}
	m(\phi^N) & \rightarrow m(\phi), \quad && \text{a.e.} \quad && \text{in } Q_T, \\
	\nu(\phi^N) & \rightarrow \nu(\phi), \quad && \text{a.e.} \quad && \text{in } Q_T, \\
	\mu(\phi^N) & \rightarrow \mu(\phi), \quad && \text{a.e.} \quad && \text{in } Q_T, \\
	\frac{\mu_\eta(\phi^N) - \mu_\eta(\phi_h^N)}{\sR_\eta \phi^N - \sR_\eta \phi_h^N} & \rightarrow \mu_\eta'(\phi), \quad && \text{a.e.} \quad && \text{in } Q_T. 
\end{alignat*}
Then one can pass to the limit of \eqref{eqs:AGG-discrete-phi-formulation} to \eqref{eqs:weak-reg-phi-formulation} and \eqref{eqs:B-discrete-formulation} to \eqref{eqs:weak-reg-B-formulation} as $ N \rightarrow \infty $, with the help of the mollifier $ \sR_\eta $ and the convergence results from above. 

By virtue of the weak convergences of $q^N$, $\phi^N$, $\pt \phi^N$, $ \widetilde{\bbb}^N $ and $ \tr \ln \widetilde{\bbb}^N $, together with Lemma \ref{lem:mollification}, one knows the right-hand side of \eqref{eqs:AGG-discrete-q-formulation} denoted by $ Q^N $ converges weakly in $ L^2(0,T; L^2(\Omega)) $ to
\begin{equation*}
    Q \coloneqq q + \omega \phi
        - \eta \pt \phi
		- \onehalf \sR_\eta \Big[\mu_\eta'(\phi) \tr(\bbb - \ln \bbb - \bbi)\Big],
\end{equation*}
by applying the convergences result above term by term. Here, the weak convergence of the last term on the right-hand side is valid due to Lemmata \ref{lem:mollification} and \ref{lem:weak-convergence-time-averaged}.
On the other hand, 
\begin{equation*}
    \inner{\partial \widetilde{E}(\phi^N)}{\phi^N}
    = \inner{Q^N}{\phi^N}
    \rightarrow \inner{Q}{\phi}, \text{ as } N \rightarrow \infty
\end{equation*}
due to the strong convergence of $ \phi^N $ in $ L^2(0,T; L^2(\Omega)) $. Therefore, by e.g.~\cite[Theorem 9.13-2]{Ciarlet2013} for monotone operators one knows $ \partial \widetilde{E}(\phi) = Q $, which is exactly \eqref{eqs:weak-reg-q-formulation}. 

{\textit{\uline{Compactness for $\bu^N$.}}}
Next, we are going to get compactness of $\bu^N$ in $ L^2(0,T; L^2(\Omega; \bbr^d)) $, which implies a pointwise almost everywhere convergence. This is in general not a problem in the case of the matched density (constant $\rho$), for which one can use the same strategy as of $ \bbb^N $ to achieve the strong convergence by the Aubin--Lions lemma. However, it is not possible to apply the same argument directly for $ \bu^N $ with unmatched densities, here, instead, we make use of the Helmholtz projection $ \bbp_\sigma $ onto $ L_\sigma^2(\Omega) $ as in \cite{ADG2013} (also called Leray projection). With the uniform boundedness of $ \bu^N $, $ \phi^N $, $ q^N $ and $ \bbb^N $, it follows that
\begin{alignat*}{3}
	\rho_h^N \bu^N \otimes \bu^N & \  \text{ is bounded in } \  && L^2(0,T; L^2(\Omega; \bbr^{d \times d})), \\
	\nabla \bu^N + \tran{\nabla} \bu^N & \  \text{ is bounded in } \  && L^2(0,T; L^2(\Omega; \bbr^{d \times d})), \\
	\bu^N \otimes \nabla q^N & \  \text{ is bounded in } \  && L^{\frac{8}{7}}(0,T; L^{\frac{4}{3}}(\Omega; \bbr^{d \times d})), \\
	q^N \nabla \phi_h^N & \  \text{ is bounded in } \  && L^2(0,T; L^{\frac{3}{2}}(\Omega; \bbr^d)), \\
	\sR_\eta [\mu_\eta(\phi_h^N)(\overline{\bbb}^N - \bbi)] & \  \text{ is bounded in } \  && L^2(0,T; L^6(\Omega; \bbr_{\mathrm{sym}}^{d \times d})), \\
	\sR_\eta \Big[\frac{\mu_\eta(\phi_h^N)}{2} \nabla G^N\Big] & \  \text{ is bounded in } \  && L^2(0,T; L^2(\Omega; \bbr^d)).
\end{alignat*}
Note that the first four bounds follow directly from \cite[Page 474]{ADG2013}, while the remaining bounds are valid due to the boundedness of $ \widetilde{\bbb}^N $, the uniform upper bound of $ \mu $ and Lemma \ref{lem:mollification}, Remark \ref{rem:uniform-bound-time-averaged}. Then going back to the equation \eqref{eqs:AGG-discrete-u-formulation}, one may infer that
\begin{alignat*}{3}
	\pt \big( \bbp_\sigma(\widetilde{\rho \bu}^N) \big) & \quad  \text{ is bounded in } \  && L^\frac{8}{7}(0,T; W^{-1,4}(\Omega; \bbr^d)),
\end{alignat*}
where $ \widetilde{\rho \bu}^N $ is the piecewise linear interpolant of $ \rho^N \bu^N(kh) $ for $ k \in \{0,...,N\}$, which fulfulls $ \pt \big( \widetilde{\rho \bu}^N \big) = \ptial{t,h}^-(\rho^N \bu^N) $.
Moreover,
\begin{alignat*}{3}
	\bbp_\sigma(\widetilde{\rho \bu}^N) & \quad  \text{ is bounded in } \  && L^2(0,T; W^{1,2}(\Omega; \bbr^d)).
\end{alignat*}
In light of the Aubin--Lions lemma, one arrives at the strong convergence
\begin{alignat*}{4}
	\bbp_\sigma(\widetilde{\rho \bu}^N) & \rightarrow \overline{\bbp_\sigma(\rho(\phi) \bu)}, \quad && \text{strongly} && \text{ in }  L^2(0,T; L^2(\Omega; \bbr^d)),
\end{alignat*}
for some function $ \overline{\bbp_\sigma(\rho(\phi) \bu)} \in L^\infty(0,T; L^2(\Omega; \bbr^d)) $. Indeed, 
\begin{equation}
    \label{eqs:rho-u-N-weak}
    \widetilde{\rho \bu}^N \rightarrow \rho(\phi) \bu, \quad \text{weakly} \text{ in }  L^2(0,T; L^2(\Omega; \bbr^d)).
\end{equation}
Then by virtue of the weak continuity of the Leray projection $ \bbp_\sigma: L^2(0,T; L^2(\Omega; \bbr^d)) \rightarrow L^2(0,T; L_\sigma^2(\Omega)) $, we obtain
\begin{equation*}
	\overline{\bbp_\sigma(\rho(\phi) \bu)} = \bbp_\sigma(\rho(\phi) \bu).
\end{equation*}
Now in view of the strong convergence of $ \bbp_\sigma (\rho^N \bu^N) $ and the weak convergence of $ \bu^N $ both in $ L^2(0,T; L^2(\Omega; \bbr^d)) $, one ends up with
\begin{align*}
	\int_0^T \int_\Omega \rho^N \abs{\bu^N}^2 \dxdt
	& = \int_0^T \int_\Omega \bbp_\sigma(\rho^N \bu^N) \cdot \bu^N \dxdt \\
	& \rightarrow
	\int_0^T \int_\Omega \bbp_\sigma(\rho(\phi) \bu) \cdot \bu \dxdt
	= \int_0^T \int_\Omega \rho(\phi) \abs{\bu}^2 \dxdt,
\end{align*}
which implies
\begin{alignat}{4}
    \label{eqs:rho-u-N-strong}
	\sqrt{\rho^N} \bu^N & \rightarrow \sqrt{\rho(\phi)} \bu, \quad && \text{strongly} && \text{ in } L^2(0,T; L^2(\Omega; \bbr^d)),
\end{alignat}
with the weak convergence \eqref{eqs:rho-u-N-weak}.
As $ \rho(r) $ is affine linear regarding $ r $, one gets
\begin{alignat*}{4}
	\rho(\phi^N) & \rightarrow \rho(\phi), \quad && \text{a.e.} \quad && \text{in } Q_T
	\quad
	\text{and}
	\quad
	\rho(\phi^N) \geq C > 0,
\end{alignat*}
which together with the strong convergence \eqref{eqs:rho-u-N-strong} of $ \sqrt{\rho^N} \bu^N $ gives rise to
\begin{alignat*}{4}
	\bu^N = \frac{1}{\sqrt{\rho^N}} \sqrt{\rho^N} \bu^N & \rightarrow \frac{1}{\sqrt{\rho(\phi)}} \sqrt{\rho(\phi)} \bu = \bu, \quad  && \text{strongly} && \text{ in }  L^2(0,T; L^2(\Omega; \bbr^d)),
\end{alignat*}
and hence
\begin{alignat*}{4}
	\bu^N & \rightarrow \bu, \quad && \text{a.e.} \quad && \text{in } Q_T.
\end{alignat*}
In addition, it holds
\begin{align*}
    & \int_\Omega \rho(\phi^N(\cdot, \tau-h)) \bu^N(\cdot, \tau-h) \cdot \overline{\bw}_\tau \dx
    \to \int_\Omega \rho(\phi(\cdot, \tau)) \bu(\cdot, \tau) \cdot \bw(\cdot,\tau) \dx, \\
    & \int_\Omega \rho(\phi_0^N) \bu_0 \cdot \overline{\bw}_0 \dx
    \to \int_\Omega \rho(\phi_0) \bu_0 \cdot \bw(\cdot,0) \dx,
\end{align*}
as $ N \to \infty $ (resp. $ h \to 0 $) for a.e.~$\tau \in (0,T)$, concerning the convergences of $\phi^N(\tau)$ and $\bu^N(\tau)$, and the strong convergence of $\overline{\bw}_\tau \to \bw(\tau)$ in $L_\sigma^2(\Omega)$ for fixed $\tau$.
Subsequently, one can pass to the limit in \eqref{eqs:AGG-discrete-u-formulation} to \eqref{eqs:weak-reg-u-formulation} term by term as $ N \rightarrow \infty $ with the strong convergences of $ \bu^N, \phi^N $, except for the terms with respect to $ \widetilde{\bbb}^N $. Let us recall the definition of $ \overline{\bbb}^N $ and $ G^N $
\begin{align*}
    \overline{\bbb}^N(t) & \coloneqq \frac{1}{h} \int_{I_{k+1}} \widetilde{\bbb}_{k+1}(s) \,\d s
    = \frac{1}{h} \int_{I_{k+1}} \widetilde{\bbb}^N(s) \,\d s, \\
    G^N(t) & \coloneqq \frac{1}{h} \int_{I_{k+1}} \tr(\widetilde{\bbb}_{k+1} - \ln \widetilde{\bbb}_{k+1} - \bbi)(s) \,\d s
    = \frac{1}{h} \int_{I_{k+1}} \tr(\widetilde{\bbb}^N - \ln \widetilde{\bbb}^N - \bbi)(s) \,\d s
\end{align*}
for $t \in [t_k, t_{k+1})$, where $k\in\{0,...,N-1\}$. In fact, by Lemma \ref{lem:uniform-bound-time-averaged}, Remark \ref{rem:uniform-bound-time-averaged} and the uniform bounds of $ \widetilde{\bbb}^N $ and $ \tr \ln \widetilde{\bbb}^N $, we know that $ \overline{\bbb}^N $ and $ \nabla G^N $ are uniformly bounded in $ L^2(Q_T) $. With the help of Lemma \ref{lem:weak-convergence-time-averaged}, one concludes
\begin{align*}
    & \int_0^\tau \int_\Omega \big[\mu_\eta(\phi_h^N) (\overline{\bbb}^N - \bbi)\big] : \sR_\eta \nabla \bw \dxdt 
        \rightarrow
        \int_0^\tau \int_\Omega \big[\mu_\eta(\phi)( \bbb - \bbi)\big] : \sR_\eta \nabla \bw \dxdt,
	    \\
	& \int_0^\tau \int_\Omega \Big[\frac{\mu_\eta(\phi_h^N)}{2} \nabla G^N\Big] \cdot \sR_\eta \bw \dxdt 
	    \rightarrow
	    \int_0^\tau \int_\Omega \Big[\frac{\mu_\eta(\phi)}{2} \nabla \tr(\bbb - \ln \bbb - \bbi)\Big] \cdot \sR_\eta \bw \dxdt.
\end{align*}
Using integration by parts over $\Omega$ and the compactness of $ \phi^N $, we get the convergence of $ \int_0^t \int_\Omega q^N \nabla \phi^N \cdot \bw \dxdt $. 
Then, it remains to verify that $ \bu \in C_w([0,T]; L_\sigma^2(\Omega)) $ and $ \bu(t) \rightarrow \bu_0 $ as $ t \rightarrow 0 $, which are clear by proceeding the same argument as in \cite[Section 5.2]{ADG2013}.

{\textit{\uline{Energy dissipation inequality.}}}
In the final step, one recovers the energy dissipation inequality \eqref{eqs:energy-dissipation-reg-inequality}. To this end, multiplying the discrete energy inequality \eqref{eqs:discrete-energy-interpolation} by a function $ \varsigma \in W^{1,1}(0,T) $ with $ \varsigma \geq 0 $ and $ \varsigma(T) = 0 $, and integrating the resulting inequality over $ (0,T) $ and by parts with respect to the time variable, one obtains
\begin{equation}
	\label{eqs:energy-dissipation-reg-varsigma-eta}
	\cE_{tot}(\bu_0, \phi_0^N, \bbb_0) \varsigma(0)
	+ \int_0^T \cE^N(\tau) \varsigma'(\tau) \dtau
	\geq
	\int_0^T \cD^N(\tau) \varsigma(\tau) \dtau,
\end{equation}
where $ \cE_{tot} $, $ \cE^N $ and $ \cD^N $ are defined in the end of last subsection.
Thanks to the compactness of $ \bu^N $, $ \phi^N $ and $ \widetilde\bbb^N $, we deduce that up to a subsequence (not relabeled), 
\begin{alignat}{3}
    \bu^N(t) & \rightarrow \bu(t), \quad && \text{in } L_\sigma^2(\Omega), 
    \label{eqs:convergence-u-reg-a.e.} \\
	\phi^N(t) & \rightarrow \phi(t), \quad && \text{in } C(\overline{\Omega}), \\
	\widetilde\bbb^N(t) & \rightarrow \bbb(t), \quad && \text{in } L^2(\Omega; \bbr_{\mathrm{sym}}^{d \times d}),
\end{alignat}
for a.e.~$ t \in (0,T) $, as $ N \to \infty $ (resp. $ h \to 0 $).
On noting the uniform energy estimate \eqref{eqs:model-discrete-energy-dissipation} and $\tr \ln \widetilde{\bbb}^N \to \tr \ln \bbb$ a.e.~in $Q_T$, it follows that
\begin{alignat*}{3}
    \int_\Omega \tr \ln \widetilde{\bbb}^N \dx & \to \int_\Omega \tr \ln \bbb \dx, \quad && \text{weakly-}* \text{ in } L^\infty(0,T).
\end{alignat*}
Then by the weak-$*$ compactness Lemma \ref{lem:weak-*-convergence-time-averaged},
\begin{equation}
    \label{eqs:convergence-trlnB-reg-L-infty}
	\int_0^T \bigg(\frac{1}{h} \int_{t-h}^t \Big(\int_\Omega \tr \ln \widetilde{\bbb}^N(\tau) \dx\Big) \dtau\bigg) \varsigma'(t) \dt \rightarrow \int_0^T \int_\Omega \tr \ln \bbb(t)  \dx \, \varsigma'(t) \dt.
\end{equation}
for any $ \varsigma \in W^{1,1}(0,T) $ with $ \varsigma \geq 0 $.
Therefore with \eqref{eqs:convergence-u-reg-a.e.}--\eqref{eqs:convergence-trlnB-reg-L-infty}, we have
\begin{equation*}
	\int_0^T \cE^N(t) \varsigma'(t) \dt \rightarrow \int_0^T \cE_\eta(t) \varsigma'(t) \dt,
\end{equation*}
with $ \cE_\eta(t) $ defined in \eqref{eqs:energy-reg}. In view of the lower semicontinuity of norms, the positivity of $m(\cdot)$, $\nu(\cdot)$ and $\mu(\cdot)$, and the almost everywhere convergence of $ \phi^N $ to $ \phi $, one has
\begin{equation*}
	\liminf_{N \rightarrow \infty} \int_0^T \cD^N(\tau) \varsigma(\tau) \dtau \geq \int_0^T \cD_\eta(\tau) \varsigma(\tau) \dtau,
\end{equation*}
for any $ \varsigma \in W^{1,1}(0,T) $ with $ \varsigma \geq 0 $, where, for a.e.~$t\in(0,T)$,
\begin{align*}
	\cD_\eta(t) \coloneqq \int_\Omega \Big( \frac{\nu(\phi)}{2} & \abs{\nabla \bu + \tran{\nabla} \bu}^2 
	+ m(\phi) \abs{\nabla q}^2 
	+ \eta \abs{\pt \phi}^2 \Big) \dx \\
	&  
	+ \int_\Omega \frac{\mu_\eta(\phi)}{2} \tr(\bbb
	+ \inv{\bbb} - 2\bbi) \dx
	+ \frac{\kappa}{d} \int_\Omega \abs{\nabla \tr \ln \bbb}^2 \dx.
\end{align*}
Passing to the limit in \eqref{eqs:energy-dissipation-reg-varsigma-eta}, as $ N \rightarrow \infty $, yields
\begin{equation*}
	\cE_\eta(0) \varsigma(0)
	+ \int_0^T \cE_\eta(\tau) \varsigma'(\tau) \dtau
	\geq
	\int_0^T \cD_\eta(\tau) \varsigma(\tau) \dtau,
\end{equation*}
for any $ \varsigma \in W^{1,1}(0,T) $ with $ \varsigma \geq 0 $ and $\varsigma(T)=0$. On account of Lemma \ref{lem:energy-dissipation}, we obtain the desired energy dissipation inequality \eqref{eqs:energy-dissipation-reg-inequality}.

This completes the proof of Theorem \ref{thm:main-reg}. \qed

\section{Existence of Weak Solutions \texorpdfstring{($ \eta \rightarrow 0 $)}{}}
\label{sec:proof-of-weak-solution}
This section is devoted to prove Theorem \ref{thm:main}, by virtue of Theorem \ref{thm:main-reg} for the regularized system \eqref{eqs:Model_reg}, compactness discussions and limit passages. Due to {substantial mathematical reasons}, the final proof of Theorem \ref{thm:main} is restricted to the two {dimensional} case, as discussed in Sections \ref{sec:state-of-the-art}, \ref{sec:technical-discussions} and Remark \ref{rem:dimension-discussion}.
\subsection{Stronger Uniform Estimate}
\label{sec:a-priori-reg-strong}
{Using only the \textit{a priori} estimate \eqref{eqs:formalEstimate_reg}, we are not able to simply prove the existence of weak solutions by passing to the limit as $ \eta \rightarrow 0 $. Even with stress diffusion, the regularity of $\bbb$ is poor, as only $\normm{\bbb}_{L_t^\infty L_x^1}$ and $\normm{\tr \ln \bbb}_{L_t^\infty L_x^1 \cap L_t^2 W^{1,2}}$ are controlled so far, and that is of no help to obtain the necessary compactness for $\bbb$.} Thus, in this section derive a stronger estimate for {$\mathbb{B}$} {with $L^2$-initial data}, which was also carried out in, e.g., \cite{BB2011,BLS2017,GKT2022}. Note that the restriction of the problem to two dimensions arises precisely from the stronger estimate, even in presence of the stress diffusion term $ \frac{\kappa}{\mu(\phi)} \Delta \bbb $. Taking the Frobenius inner product of \eqref{eqs:fluid_reg-B} with $\bbb$, using the chain rule, integrating over $ \Omega $ and by parts, we have for a.e.~$t\in(0,T)$, that
\begin{align*}
    & \ddt \onehalf \norm{\bbb}_{L^2}^2
    + \norm{\bbb}_{L^2}^2
    + \int_\Omega \frac{\kappa}{\mu_\eta(\phi)} \abs{\nabla \bbb}^2 \dx \\
    & = 
    \int_\Omega \tr\bbb \dx 
    + \kappa \int_\Omega \frac{\mu_\eta'(\phi)}{\mu_\eta^2(\phi)} (\nabla \sR_\eta \phi \cdot \nabla) \bbb : \bbb \dx
    + 2 \int_\Omega \bbb^2 : \nabla \sR_\eta \bu \dx
    \\
    &\quad
    {- \int_\Omega (\sR_\eta \bu \cdot\nabla) \bbb :  \bbb \dx.}
\end{align*}
Noting the upper-lower bounds of $\mu(\cdot), \mu'(\cdot)$, 
employing H\"older's and Young's inequalities, the 2D Gagliardo--Nirenberg inequality and using Lemma \ref{lem:mollification}, we obtain
{
\begin{align*}
	& \ddt \onehalf \norm{\bbb}_{L^2}^2
	+ \norm{\bbb}_{L^2}^2
    + \frac{\kappa}{\overline\mu} \norm{\nabla \bbb}_{L^2}^2 \\
	& \leq \onehalf \norm{\bbb}_{L^2}^2
    + \abs{\Omega}
    + \frac{3\kappa}{4 \overline{\mu}} \norm{\nabla \bbb}_{L^2}^2
    + C \norm{\nabla\bu}_{L^2}^2
	+ C \Big(\norm{\nabla \phi}_{L^4}^4 + \norm{\nabla \bu}_{L^2}^2
    + \norm{\bu}_{L^4}^4 \Big) \norm{\bbb}_{L^2}^2.
\end{align*}}
Here, we used
{
\begin{align*}
    \abs{\int_\Omega \tr\bbb\dx } 
    &\leq \frac14 \norm{\tr\bbb}_{L^2}^2
    + \abs{\Omega}
    \\
    &\leq \onehalf \norm{\bbb}_{L^2}^2
    + \abs{\Omega},
    \\
    \abs{ \kappa \int_\Omega \frac{\mu_\eta'(\phi)}{\mu_\eta^2(\phi)}(\nabla \sR_\eta \phi \cdot \nabla) \bbb : \bbb \dx }
    & \leq C\norm{\nabla \bbb}_{L^2} \norm{\bbb}_{L^4} \norm{\nabla \sR_\eta \phi}_{L^4} 
    \\
    & \leq C \norm{\nabla \bbb}_{L^2}^{\frac{3}{2}} \norm{\bbb}_{L^2}^\onehalf \norm{\nabla \sR_\eta \phi}_{L^4}
    \\
    &\leq \frac{\kappa}{4 \overline{\mu}} \norm{\nabla \bbb}_{L^2}^2
    + C \norm{\bbb}_{L^2}^2 \norm{\nabla \phi}_{L^4}^4, 
    \\
    \abs{\int_\Omega \bbb^2 : \nabla \sR_\eta \bu \dx}
    & \leq \norm{\bbb}_{L^4}^2 \norm{\nabla \sR_\eta \bu}_{L^2} 
    \\
    & \leq \norm{\nabla \bbb}_{L^2} \norm{\bbb}_{L^2} \norm{\nabla \sR_\eta \bu}_{L^2}
    \\
    &\leq \frac{\kappa}{4 \overline{\mu}} \norm{\nabla \bbb}_{L^2}^2
    + C \norm{\bbb}_{L^2}^2 \norm{\nabla \bu}_{L^2}^2,
    \\
    \abs{\int_\Omega (\sR_\eta \bu \cdot\nabla) \bbb :  \bbb \dx}
    &\leq 
    \norm{\sR_\eta \bu}_{L^4} \norm{\nabla\bbb}_{L^2} \norm{\bbb}_{L^4}
    \\
    &\leq C \norm{\sR_\eta \bu}_{L^4} \norm{\nabla\bbb}_{L^2}^{3/2} \norm{\bbb}_{L^2}^{1/2}
    \\
    &\leq \frac{\kappa}{4 \overline{\mu}} \norm{\nabla \bbb}_{L^2}^2
    + C \norm{\bbb}_{L^2}^2 \norm{\bu}_{L^4}^4 .
\end{align*}}
{In summary, we have
\begin{align*}
    &\ddt \onehalf \norm{\bbb}_{L^2}^2
    + \onehalf \norm{\bbb}_{L^2}^2
    + \frac{\kappa}{4 \overline\mu} \norm{\nabla \bbb}_{L^2}^2 
    \\
    &\leq \abs{\Omega}
    + C \Big(\norm{\nabla \phi}_{L^4}^4 + \norm{\nabla \bu}_{L^2}^2 + \norm{\bu}_{L^4}^4 \Big) \norm{\bbb}_{L^2}^2,
\end{align*}
which, by integrating over $ (0,t) $ and using a Gronwall argument (cf.~\cite[Lemma 3.1]{GL2017}), implies
\begin{align}
    \label{eqs:formalEstimate_reg_strong} \nonumber
    &\norm{\bbb(t)}_{L^2}^2
    + \int_0^t \norm{\bbb(\tau)}_{L^2}^2 \d\tau
    + \int_0^t \norm{\nabla \bbb(\tau)}_{L^2}^2 \d\tau
    \\
    &\leq C \big( \norm{\bbb_0}_{L^2}^2 + t \abs{\Omega} \big)
    \big(1 + Ch(t)  \exp(C h(t)) \big),
\end{align}
for a.e.~$ t \in (0,T) $, where $ C > 0 $ does not depend on $ \eta > 0 $. Here, we defined
\begin{align*}
    h(t)\coloneqq 
    \normm{\nabla \phi}_{L_t^4 L_x^4}^4 
    + \normm{\nabla \bu}_{L_t^2 L_x^2}^2 
    + \normm{\bu}_{L_t^4 L_x^4}^4 ,
\end{align*}
which is bounded independently of $\eta>0$ due to \eqref{eqs:formalEstimate_reg} and the embedding \eqref{eqs:interpolation-L4}.
}

\subsection{Proof of Theorem \ref{thm:main}}
\label{sec:proof-weak-limiting}
Let us denote $ (\bu^\eta, \bbb^\eta, \phi^\eta, q^\eta) $ the corresponding regularized solution of \eqref{eqs:Model_reg}, where $\eta>0$. Note that the \textit{a priori} estimate done in Section \ref{sec:a-priori-reg} and \ref{sec:a-priori-reg-strong} are uniform in terms of $ \eta $. We conclude from \eqref{eqs:formalEstimate_reg} and \eqref{eqs:formalEstimate_reg_strong} that
\begin{align}
	& \cE_\eta(\tau)
	+ \norm{\bbb^\eta(\tau)}_{L^2}^2 
	+ \int_0^\tau \norm{\nabla \bu^\eta(t)}_{L^2}^2 \dt
	+ \int_0^\tau \norm{\nabla q^\eta(t)}_{L^2}^2 \dt 
	+ \int_0^\tau \norm{\bbb^\eta(t)}_{L^2}^2 \dt
	\nonumber \\
	& 
	+ \int_0^\tau \norm{\nabla \bbb^\eta(t)}_{L^2}^2 \dt
	+ \int_0^\tau \norm{\tr(\bbb^\eta + \inv{(\bbb^\eta)} - 2\bbi)(t)}_{L^1} \dt
	+ \int_0^\tau \norm{\nabla \tr \ln \bbb^\eta(t)}_{L^2}^2 \dt
        \nonumber
	\\
	&
	\leq C (\cE_\eta^0, \norm{\bbb_0}_{L^2}^2)
	\leq C (\cE_0, \norm{\bbb_0}_{L^2}^2), 
	\label{eqs:uniform-estimate}
\end{align}
for any $ \tau \in (0,T) $, where $ C > 0 $ is uniform in terms of $ \eta $, which together with the fact that $ \cE(0), \normm{\bbb_0}_{L^2}^2 $ are bounded, implies the following uniform bounds (in $ \eta $)
\begin{alignat*}{3}
	\bu^\eta & \quad  \text{ is bounded in } \quad && L^2(0,T; W_0^{1,2}(\Omega; \bbr^2)) \text{ and } L^\infty(0,T; L_\sigma^2(\Omega)), \\
	\nabla q^\eta & \quad \text{ is bounded in } \quad && L^2(0,T; L^2(\Omega; \bbr^2)), \\
	\phi^\eta & \quad \text{ is bounded in } \quad && L^\infty(0,T; W^{1,2}(\Omega)), \\
	\bbb^\eta & \quad \text{ is bounded in } \quad && L^2(0,T; W^{1,2}(\Omega; \bbr_{\mathrm{sym}}^{2 \times 2})) \text{ and } L^\infty(0,T; L^2(\Omega; \bbr_{\mathrm{sym}}^{2 \times 2})),
\end{alignat*}
and
\begin{equation*}
	\int_0^T \abs{\int_\Omega q^\eta \dx} \dt \leq  M(T),
\end{equation*}
for a certain monotone function $ M: \bbr^+ \rightarrow \bbr^+ $. 
Moreover, testing \eqref{eqs:weak-reg-q-formulation} with $ \Delta \phi^\eta $ together with $W(r)=W_0(r) - \frac{\omega}{2} r^2$ as
in Section \ref{sec:a-priori-reg}, integration by parts and Young's inequality yields
\begin{align*}
	& \int_0^T \int_\Omega \big( \abs{\Delta \phi^\eta}^2 
	+ W_0''(\phi^\eta) \abs{\nabla \phi^\eta}^2 \big) \dxdt 
		\\
	& = \int_0^T \int_\Omega \Big( \nabla q^\eta \cdot \nabla \phi^\eta 
		+ \omega \abs{\nabla \phi^\eta}^2
		- \frac{\mu'}{2} \nabla \tr(\bbb^\eta - \ln \bbb^\eta - \bbi) \cdot \nabla \phi^\eta \Big) \dxdt \\
	& \leq \int_0^T \norm{\nabla q^\eta}_{L^2}^2 \dt
		+ C(\omega, \overline{\mu}') 
		+ \int_0^T \big( \norm{\nabla \bbb^\eta}_{L^2}^2 + \norm{\nabla \tr \ln \bbb^\eta}_{L^2}^2 \big) \dt
		\leq C,
\end{align*}
where $ C > 0 $ is uniform in $ \eta $ due to $ W_0'' > 0 $ and \eqref{eqs:uniform-estimate}. Therefore, one obtains further
\begin{alignat*}{3}
	\phi^\eta & \quad \text{ is bounded in } \quad && L^2(0,T; W^{2,2}(\Omega)).
\end{alignat*}

Next, we gain more bounds for the purpose of compactness. By the Sobolev embedding in two dimensions and the interpolation embedding \eqref{eqs:interpolation-L4} we know
\begin{alignat*}{3}
	\bu^\eta & \quad \text{ is bounded in } \quad && L^4(0,T; L^4(\Omega; \bbr^2)), \\
	\bbb^\eta & \quad \text{ is bounded in } \quad && L^4(0,T; L^4(\Omega; \bbr_{\mathrm{sym}}^{2 \times 2})),
\end{alignat*}
which together with bounds above leads to
\begin{alignat*}{3}
	\sR_\eta \bu^\eta \otimes \bbb^\eta & \quad \text{ is bounded in } \quad && L^2(0,T; L^2(\Omega; \bbr^{2 \times 2 \times 2})),  \\
	\nabla \sR_\eta \bu^\eta \bbb^\eta & \quad \text{ is bounded in } \quad && L^{\frac{4}{3}}(0,T; L^{\frac{4}{3}}(\Omega; \bbr^{2 \times 2})).
\end{alignat*}
By making use of the weak formulation \eqref{eqs:weak-reg-B-formulation}, one ends up with
\begin{alignat*}{3}
	\pt \bbb^\eta & \quad \text{ is bounded in } \quad && L^{\frac43}(0,T; [W^{1,2}(\Omega; \bbr_{\mathrm{sym}}^{2 \times 2})]'),
\end{alignat*}
Because of \eqref{eqs:weak-reg-phi-formulation} and the boundedness of $ \bu^\eta $, $ \nabla q^\eta $, we have
\begin{alignat*}{3}
	\pt \phi^\eta & \quad \text{ is bounded in } \quad && L^2(0,T; [W^{1,2}(\Omega)]'),
\end{alignat*}
Then up to a subsequence ($ \eta_k \rightarrow 0 $ as $ k \rightarrow \infty $) still denoted by the superscript $ \eta $, one obtains
\begin{alignat*}{4}
	\bu^\eta & \rightarrow \bu, \quad && \text{weakly} \quad && \text{in } L^2(0,T; W_0^{1,2}(\Omega; \bbr^2)), \\
	\bu^\eta & \rightarrow \bu, \quad && \text{weakly-}* \quad && \text{in } L^\infty(0,T; L_\sigma^2(\Omega)) \cong [L^1(0,T; L_\sigma^2(\Omega))]', \\
	\phi^\eta & \rightarrow \phi, \quad && \text{weakly} \quad && \text{in } L^2(0,T; W^{2,2}(\Omega)), \\
	\phi^\eta & \rightarrow \phi, \quad && \text{weakly-}* \quad && \text{in } L^\infty(0,T; W^{1,2}(\Omega)) \cong { [L^1(0,T; (W^{1,2}(\Omega))' )]', } \\
	\pt \phi^\eta & \rightarrow \pt \phi, \quad && \text{weakly} \quad && \text{in } L^2(0,T; [W^{1,2}(\Omega)]') , \\
	q^\eta & \rightarrow q, \quad && \text{weakly} \quad && \text{in } L^2(0,T; W^{1,2}(\Omega)), \\
	\nabla q^\eta & \rightarrow \nabla q, \quad && \text{weakly} \quad && \text{in } L^2(0,T; L^2(\Omega; \bbr^2)), \\
	\bbb^\eta & \rightarrow \bbb, \quad && \text{weakly} \quad && \text{in } L^2(0,T; W^{1,2}(\Omega; \bbr_{\mathrm{sym}}^{2 \times 2})), \\
	\bbb^\eta & \rightarrow \bbb, \quad && \text{weakly-}* \quad && \text{in } L^\infty(0,T; L^2(\Omega; \bbr_{\mathrm{sym}}^{2 \times 2})) \cong [L^1(0,T; L^2(\Omega; \bbr_{\mathrm{sym}}^{2 \times 2}))]', \\
	\pt \bbb^\eta & \rightarrow \pt \bbb, \quad && \text{weakly} \quad && \text{in } L^{\frac43}(0,T; [W^{1,2}(\Omega; \bbr_{\mathrm{sym}}^{2 \times 2})]').
\end{alignat*}
In view of the Aubin--Lions lemma, one concludes the strong convergence (up to a non-relabeled subsequence)
\begin{alignat*}{4}
	\phi^\eta & \rightarrow \phi, \quad && \text{strongly} \quad && \text{in } L^2(0,T; W^{1,p}(\Omega)), \quad \forall\, 1 \leq p < \infty, \\
	\phi^\eta & \rightarrow \phi, \quad && \text{a.e.} \quad && \text{in } Q_T, \\
	\bbb^\eta & \rightarrow \bbb, \quad && \text{strongly} \quad && \text{in } L^2(0,T; L^p(\Omega; \bbr_{\mathrm{sym}}^{2 \times 2})), \quad \forall\, 2 \leq p < \infty, \\
	\bbb^\eta & \rightarrow \bbb, \quad && \text{a.e.} \quad && \text{in } Q_T.
\end{alignat*}
Arguing in a similar fashion as in Section \ref{sec:B-positive-delta} leads us to
\begin{align*}
	& \bbb \text{ is positive definite a.e.~in } Q_T, \\
	& \tr \ln \bbb^\eta \rightarrow \tr \ln \bbb, \quad \text{weakly} \quad \text{in } L^2(0,T; W^{1,2}(\Omega)),
\end{align*}
Then it follows from the continuity of $ W'(\cdot) $ and $ \tr \ln (\cdot) $ that
\begin{alignat*}{4}
	W'(\phi^\eta) & \rightarrow W'(\phi), \quad && \text{a.e.} \quad && \text{in } Q_T, \\
	\tr \ln \bbb^\eta & \rightarrow \tr \ln \bbb, \quad && \text{a.e.} \quad && \text{in } Q_T.
\end{alignat*}
Again by the uniform boundedness of $\tr \ln \bbb^\eta$ in $L^2(Q_T)$, one concludes the strong convergence of $\tr \ln \bbb^\eta \to \tr \ln \bbb$ in $L^{2-\epsilon}(Q_T)$ for $0< \epsilon < 1$ by Vitali's convergence theorem. Then in view of Lemma \ref{lem:mollification} and strong convergence of $\phi^\eta$, we have
\begin{equation}
    \sR_\eta \Big[\frac{\mu_\eta'(\phi)}{2} \tr(\bbb - \ln \bbb - \bbi)\Big]
    \to \frac{\mu'(\phi)}{2} \tr(\bbb - \ln \bbb - \bbi), \ \text{ in } L^1(Q_T),
\end{equation}
Consequently, up to a non-relabeled subsequence, we end up with 
\begin{equation*}
	q^\eta \rightarrow q = W'(\phi) - \Delta \phi + \frac{\mu'(\phi)}{2} \tr(\bbb - \ln \bbb - \bbi), \quad \text{a.e.~in } Q_T,
\end{equation*}
as $ \eta \rightarrow 0 $. 

Now, we are in the position to get the compactness of $ \bu^\eta $, by addressing the problem caused by the variable density $ \rho(\phi^\eta) $ with the Helmholtz projection $ \bbp_\sigma $. With the boundedness of $ \bu^\eta $, $ \phi^\eta $, $ q^\eta $ and $ \bbb^\eta $ in two dimensions, one infers
\begin{alignat*}{3}
	\rho(\phi^\eta) \bu^\eta \otimes \bu^\eta & \  \text{ is bounded in } \  && L^2(0,T; L^2(\Omega; \bbr^{2 \times 2})), \\
	\nabla \bu^\eta + \tran{\nabla} \bu^\eta & \  \text{ is bounded in } \  && L^2(0,T; L^2(\Omega; \bbr^{2 \times 2})), \\
	\bu^\eta \otimes \nabla q^\eta & \  \text{ is bounded in } \  && L^{\frac{4}{3}}(0,T; L^{\frac{4}{3}}(\Omega; \bbr^{2 \times 2})), \\
	q^\eta \nabla \phi^\eta & \  \text{ is bounded in } \  && L^2(0,T; L^{\frac{2p}{2+p}}(\Omega; \bbr^2)), \ \forall\,  2< p < \infty, \\
	\sR_\eta[\mu_\eta(\phi^\eta)(\bbb^\eta - \bbi)] & \  \text{ is bounded in } \  && L^2(0,T; L^p(\Omega; \bbr_{\mathrm{sym}}^{2 \times 2})), \ \forall\,  2< p < \infty, \\
	\sR_\eta\Big[\frac{\mu_\eta(\phi^\eta)}{2} \nabla \tr(\bbb^\eta - \ln \bbb^\eta - \bbi)\Big] & \  \text{ is bounded in } \  && L^2(0,T; L^2(\Omega; \bbr^2)),
\end{alignat*}
which means the all terms above are bounded in $ L^{\frac{4}{3}}(0,T; L^{\frac{4}{3}}(\Omega)) $ (without specifying the dimensions). Then in \eqref{eqs:weak-reg-u-formulation}, the test function can be restricted to 
\begin{equation*}
	\bw, \nabla \bw \in [L^{\frac{4}{3}}(0,T; L^{\frac{4}{3}}(\Omega))]' = L^4(0,T; L^4(\Omega)).
\end{equation*}
Note that $ \bw $ lies in the solenoidal space. Hence, with the help of the Leray projection $ \bbp_\sigma $, we conclude that
\begin{alignat*}{3}
	\pt\big(\bbp_\sigma(\rho(\phi^\eta) \bu^\eta)\big) & \  \text{ is bounded in } \  && [L^4(0,T; W_{0,\sigma}^{1,4}(\Omega; \bbr^2))]' = L^\frac{4}{3}(0,T; W_\sigma^{-1,4}(\Omega; \bbr^2)), \\
	\bbp_\sigma(\rho(\phi^\eta) \bu^\eta) & \  \text{ is bounded in } \  && L^2(0,T; W^{1,2}(\Omega; \bbr^2)),
\end{alignat*}
which together with the Aubin--Lions lemma implies the strong convergence
\begin{alignat}{4}
    \label{eqs:strong-convergence_Prhou}
	\bbp_\sigma(\rho(\phi^\eta) \bu^\eta) & \rightarrow \overline{\bbp_\sigma(\rho(\phi) \bu)}, \quad && \text{strongly} \quad && \text{in } L^2(0,T; L_\sigma^2(\Omega; \bbr^2)),
\end{alignat}
for some function $ \overline{\bbp_\sigma(\rho(\phi) \bu)} \in L^\infty(0,T; L_\sigma^2(\Omega; \bbr^2)) $. Analogously to Section \ref{sec:proof-reg}, we identify $ \overline{\bbp_\sigma(\rho(\phi) \bu)} = \bbp_\sigma(\rho(\phi) \bu) $. Indeed, as $ \rho(\phi^\eta) \bu^\eta \rightarrow \rho(\phi) \bu \text{ weakly} \text{ in } L^2(0,T; L^2(\Omega; \bbr^2)) $, and by virtue of the weak continuity of the Leray projection $ \bbp_\sigma: L^2(0,T; L^2(\Omega; \bbr^2)) \rightarrow L^2(0,T; L_\sigma^2(\Omega)) $, we obtain
\begin{equation*}
	\overline{\bbp_\sigma(\rho(\phi) \bu)} = \bbp_\sigma(\rho(\phi) \bu).
\end{equation*}
Once again, we prove the strong convergence of $ \bu^\eta $ to $ \bu $ through the convergence of the kinetic energy. Namely,
\begin{align*}
	\int_0^T \int_\Omega \rho(\phi^\eta) \abs{\bu^\eta}^2 \dxdt
	& = \int_0^T \int_\Omega \bbp_\sigma(\rho(\phi^\eta) \bu^\eta) \cdot \bu^\eta \dxdt \\
	& \rightarrow
	\int_0^T \int_\Omega \bbp_\sigma(\rho(\phi) \bu) \cdot \bu \dxdt
	= \int_0^T \int_\Omega \rho(\phi) \abs{\bu}^2 \dxdt,
\end{align*}
from which we have
\begin{alignat*}{4}
	\sqrt{\rho(\phi^\eta)} \bu^\eta & \rightarrow \sqrt{\rho(\phi)} \bu, \quad && \text{strongly} \quad && \text{in } L^2(0,T; L^2(\Omega; \bbr^2)).
\end{alignat*}
Because $ \rho(r) $ is affine regarding $ r $, one gets
\begin{alignat*}{4}
	\rho(\phi^\eta) & \rightarrow \rho(\phi), \quad && \text{a.e.} \quad && \text{in } Q_T
	\quad
	\text{and}
	\quad
	\abs{\rho(\phi^\eta)} \geq C > 0.
\end{alignat*}
Then it holds that
\begin{alignat*}{4}
	\bu^\eta = \frac{1}{\sqrt{\rho(\phi^\eta)}} \sqrt{\rho(\phi^\eta)} \bu^\eta & \rightarrow \frac{1}{\sqrt{\rho(\phi)}} \sqrt{\rho(\phi)} \bu = \bu, \quad  && \text{strongly} \quad && \text{in } L^2(0,T; L^2(\Omega; \bbr^2)),
\end{alignat*}
and hence
\begin{alignat*}{4}
	\bu^\eta & \rightarrow \bu, \quad && \text{a.e.} \quad && \text{in } Q_T.
\end{alignat*}

With all the compactness above, one can pass to the limit in \eqref{eqs:weak-reg-u-formulation} to \eqref{eqs:weak-full-u-formulation} as $ \eta \rightarrow 0 $, combining with
\begin{alignat*}{4}
	m(\phi^\eta) & \rightarrow m(\phi), \quad && \text{a.e.} \quad && \text{in } Q_T, \\
	\nu(\phi^\eta) & \rightarrow \nu(\phi), \quad && \text{a.e.} \quad && \text{in } Q_T, \\
	\mu(\phi^\eta) & \rightarrow \mu(\phi), \quad && \text{a.e.} \quad && \text{in } Q_T,
\end{alignat*}
which can be deduced by means of the continuity of $ m(\cdot), \nu(\cdot), \mu(\cdot) $ and the almost everywhere convergence of $ \phi^\eta $. Similarly, we use integration by parts and the compactness of $ \phi^\eta $ to handle the term $ \int_0^t \int_\Omega q^\eta \nabla \phi^\eta \cdot \bw \dxdt $.

In the final step of the proof, we derive the energy dissipation inequality \eqref{eqs:energy-dissipation-inequality}. To this end, multiplying the differential inequality \eqref{eqs:dt-formal_reg} by a function $ \varsigma \in W^{1,1}(0,T) $ with $ \varsigma \geq 0 $, $ \varsigma(T) = 0 $, and integrating the resulting inequality over $ (0,T) $ and by parts with respect to the time variable, one obtains
\begin{equation}
	\label{eqs:energy-dissipation-varsigma-eta}
	\cE_\eta(0) \varsigma(0)
	+ \int_0^T \cE_\eta(\tau) \varsigma(\tau)' \dtau
	\geq
	\int_0^T \cD_\eta(\tau) \varsigma(\tau) \dtau,
\end{equation}
where $ \cE_\eta(t) $ is given in \eqref{eqs:energy-reg} and, for a.e.~$t\in(0,T)$,
\begin{align*}
	\cD_\eta(t) \coloneqq \int_\Omega \Big( \frac{\nu(\phi^\eta)}{2} & \abs{\nabla \bu^\eta + \tran{\nabla} \bu^\eta}^2 
	+ m(\phi^\eta) \abs{\nabla q^\eta}^2 \Big) \dx \\
	&  
	+ \int_\Omega \frac{\mu(\phi^\eta)}{2} \tr(\bbb^\eta
	+ \inv{(\bbb^\eta)} - 2\bbi) \dx
	+ \frac{\kappa}{2} \int_\Omega \abs{\nabla \tr \ln \bbb^\eta}^2 \dx.
\end{align*}
Thanks to the compactness of $ \bu^\eta $, $ \phi^\eta $, we deduce that up to a subsequence (not relabeled),
\begin{gather*}
	\begin{alignedat}{3}
	    \bu^\eta(t) & \rightarrow \bu(t), \quad && \text{in } L_\sigma^2(\Omega), \\
	    \phi^\eta(t) & \rightarrow \phi(t), \quad && \text{in } C(\overline{\Omega}), \\
	    \bbb^\eta(t) & \rightarrow \bbb(t), \quad && \text{in } L^2(\Omega; \bbr_{\mathrm{sym}}^{2 \times 2}),
	\end{alignedat}\\
    \int_\Omega \tr \ln \bbb^\eta(t) \dx \dtau \rightarrow \int_\Omega \tr \ln \bbb(t) \dx,
\end{gather*}
for a.e.~$ t \in (0,T) $, where the last statement holds true by Vitali's convergence theorem in view of the pointwise convergence of $ \tr \ln \bbb^\eta $ and the uniform boundedness in $L^2(Q_T)$. Therefore, it comes up with
\begin{alignat*}{3}
	\cE_\eta(t) & \rightarrow \cE(t), \quad && \text{a.e.~in } (0,T),
\end{alignat*}
where $ \cE(t) $ is defined in \eqref{eqs:energy}. In view of the lower semicontinuity of norms
and the almost everywhere convergence of $ \phi^\eta $ to $ \phi $, one has
\begin{equation*}
	\liminf_{\eta \rightarrow 0} \int_0^T \cD_\eta(\tau) \varsigma(\tau) \dtau \geq \int_0^T \cD(\tau) \varsigma(\tau) \dtau,
\end{equation*}
for any $ \varsigma \in W^{1,1}(0,T) $ with $ \varsigma \geq 0 $, where
\begin{align*}
	\cD(t) \coloneqq \int_\Omega \Big( \frac{\nu(\phi)}{2} & \abs{\nabla \bu + \tran{\nabla} \bu}^2 
	+ m(\phi) \abs{\nabla q}^2 \Big) \dx \\
	&  
	+ \int_\Omega \frac{\mu(\phi)}{2} \tr(\bbb
	+ \inv{\bbb} - 2\bbi) \dx
	+ \frac{\kappa}{2} \int_\Omega \abs{\nabla \tr \ln \bbb}^2 \dx.
\end{align*}
Note that here we employed the positivity of $m(\cdot)$, $\nu(\cdot)$ and $\mu(\cdot)$, and
\begin{equation*}
    \liminf_{\eta\to 0} \int_0^T \eta \norm{\partial_t\phi^\eta(\tau)}_{L^2}^2 \varsigma(\tau) \d\tau \geq 0.
\end{equation*}
Passing to the limit in \eqref{eqs:energy-dissipation-varsigma-eta} as $ \eta \rightarrow 0 $ yields
\begin{equation*}
	\cE(0) \varsigma(0)
	+ \int_0^T \cE(\tau) \varsigma'(\tau) \dtau
	\geq
	\int_0^T \cD(\tau) \varsigma(\tau) \dtau,
\end{equation*}
for any $ \varsigma \in W^{1,1}(0,T) $ with $ \varsigma \geq 0 $ and $\varsigma(T)=0$. By virtue of Lemma \ref{lem:energy-dissipation}, we get the desired energy dissipation inequality \eqref{eqs:energy-dissipation-inequality}. The additional stronger estimate of $ \bbb $ can be obtained directly from \eqref{eqs:formalEstimate_reg_strong} together with \eqref{eqs:energy-dissipation-inequality}.

This finishes the proof.
\qed

\begin{remark}
    \label{rem:generalcases}
	The case of a general free energy
	\begin{equation*}
		\int_\Omega \Big( \frac{a(\phi)}{2} \abs{\nabla \phi}^2 + W(\phi) \Big) \dx
	\end{equation*}
	with some positive coefficient $ a(\phi) $ 
 can be achieved by our method with slight modifications. Note that the two-phase incompressible flow with different densities and general free energy was already addressed in \cite{ADG2013}.
 In our framework, one only needs to adopt a more complicated subgradient with respect to a reparametrized potential as in \cite{ADG2013} 
 during the proof of the existence of solutions to the regularized problem in Section \ref{sec:regularized}.
\end{remark}

\appendix
\section{Derivation of the Model: Local Dissipation Laws}
\label{sec:derivation}
In this section, we provide the main arguments for a thermodynamically consistent derivation of the model \eqref{eqs:Model}. The general idea is to start from physical balance laws in a closed, isothermal system. 
After that, we state phenomenological assumptions such that the system fulfills the second law of thermodynamics. At that point, there are some general frameworks that can be used for the constitutive assumptions, such as the \textit{Local Dissipation Inequality and
the Lagrange Multiplier Approach} developed by Liu \cite{Liu1972}, the \textit{Onsager's variational principle} \cite{Onsager1932} or the \textit{General Equation for Non-Equilibrium Reversible-Irreversible Coupling} (GENERIC) framework of Gmerla--\"Ottinger \cite{GO1997a, GO1997b}. We also refer to \cite{GKL2018, MP2018}. In our case, we will follow \cite{AGG2012} and use the \textit{Local Dissipation Inequality and
the Lagrange Multiplier Approach}.

Following \cite{AGG2012} one can derive the following balance law of mass in local form
\begin{align}
\label{eq:appendix-phi}
    \partial_t \phi + \Div(\phi\bu) + \Div \bJ_\phi = 0,
\end{align}
where $\phi$ is the difference of volume fractions, $\bJ_\phi$ is a diffusive flux and $\bu$ is the volume-averaged velocity satisfying the incompressibility condition 
\begin{align}
\label{eq:appendix-div}
    \Div \bu = 0.
\end{align}
The balance law of linear momentum reads
\begin{align}
\label{eq:appendix-v}
    \partial_t (\rho \bu) + \Div(\rho \bu \otimes \bu) = \Div \tilde \bT,
\end{align}
where $\rho = \hat\rho(\phi) =\tfrac12 \overline\rho_2 (1+\phi) + \tfrac12 \overline\rho_1 (1-\phi)$ is the variable mass density of the mixture 
and $\tilde\bT$ is the full stress tensor of the system, which is symmetric due to the balance law of angular momentum. Here, $\overline\rho_1, \overline\rho_2$ denote the constant mass densities of the pure phases of the mixture. For simplicity, we assume that no external forces are present.
We consider the deformation gradient $\mathbb{F}$ of the mixture, given by
\begin{align}
\label{eq:appendix-F}
    \partial_t \mathbb{F} + \bu\cdot\nabla \mathbb{F} = \nabla\bu \mathbb{F}.
\end{align}
For the derivation for the Oldroyd-B model with relaxation, we assume a virtual multiplicative decomposition of the deformation gradient into one part capturing the irreversible, dissipative processes and another part for the total elastic response of the material, i.e.,
\begin{align}
\label{eq:appendix-decomposition}
    \mathbb{F} = \mathbb{F}_e \mathbb{F}_d,
\end{align}
also see \cite{MP2018}. Then, the left Cauchy--Green tensor associated with the elastic part of the total mechanical response $\bbb \coloneqq \mathbb{F}_e \tran{\mathbb{F}_e}$ is the sought quantity for our model.
Introducing the tensorial quantity $\mathbb{L}_d \coloneqq (\partial_t \mathbb{F}_d + \bu\cdot\nabla \mathbb{F}_d) \mathbb{F}_d^{-1}$, one can obtain with a simple calculation, using \eqref{eq:appendix-F} and \eqref{eq:appendix-decomposition},
\begin{align*}
    \partial_t \mathbb{F}_e + \bu\cdot\nabla \mathbb{F}_e = \nabla \bu \mathbb{F}_e - \mathbb{F}_e \mathbb{L}_d,
\end{align*}
which gives for $\bbb = \mathbb{F}_e \tran{\mathbb{F}}_e$
\begin{align}
\label{eq:appendix-B}
    \partial_t \bbb + \bu\cdot\nabla \bbb = \nabla \bu \bbb + \bbb \tran{\nabla}\bu - \mathbb{F}_e (\mathbb{L}_d + \tran{\mathbb{L}_d}) \tran{\mathbb{F}_e}.
\end{align}
Later, the dependence on $\mathbb{D}_d \coloneqq \frac12 (\mathbb{L}_d + \tran{\mathbb{L}_d})$ will be removed with a constitutive relation.


Assuming a general energy density of the form 
\begin{align*}
    e = \hat e(\phi, \nabla\phi, \bbb) + \frac12 \rho(\phi) \abs{\bu}^2,
\end{align*}
composed of a (general) free energy density and the kinetic energy density of the system, the second law of thermodynamics for a closed physical system in the isothermal case gives
\begin{align*}
    \ddt \int_{V(t)} e(\phi,\nabla\phi, \bu, \bbb) \dx
    \leq - \int_{\partial V(t)} \bJ_e \cdot \bn \,\d\mathcal{H}^{d-1} 
    + \int_{\partial V(t)} (\tilde\bT \bn) \cdot \bu \,\d\mathcal{H}^{d-1},
\end{align*}
where $V(t) \subset \Omega$ is an arbitrary test volume transported by the fluid velocity, $\bn$ is the outer unit normal to $\partial V(t)$ and $\bJ_e$ is a dissipative energy flux yet to be determined. Roughly speaking, the change of the total energy in a test volume $V(t)$ cannot exceed the 
change of energy due to diffusion and the working due to macroscopic stresses. With the help of the divergence theorem and Reynold's transport theorem, and as the test volume $V(t)\subset\Omega$ is arbitrary, one can obtain a local inequality for the dissipation by
\begin{align*}
    - \mathcal{D} \coloneqq 
    (\partial_t^\bullet e + e \Div \bu + \Div \bJ_e) 
    - \Div (\tilde\bT) \cdot \bu
    - \tilde\bT : \nabla\bu 
    - \lambda_\phi( \partial_t^\bullet \phi + \phi \Div\bu + \Div \bJ_\phi ) \leq 0,
\end{align*}
where $\partial_t^\bullet\coloneqq \partial_t + (\bu\cdot\nabla)$ denotes the material derivative and $\lambda_\phi$ is a Lagrange multiplier for the balance law of mass \eqref{eq:appendix-phi} for the order parameter $\phi$. 
Note that in general the unknowns $\bu, \bJ_e, \tilde\bT, \phi, \bJ_\phi, \lambda_\phi$ (and their derivatives appearing in the local dissipation inequality) can attain arbitrary values for a given point in space and time. 
It can be checked with a straightforward computation, using \eqref{eq:appendix-div}, \eqref{eq:appendix-v}, \eqref{eq:appendix-B} and many reformulations (see, e.g., \cite[Section 2.2]{AGG2012} for the AGG part and, e.g., \cite[Section 4.4]{MP2018} for the viscoelastic part) that this local dissipation inequality holds true if the following constitutive assumptions are applied: 
\begin{gather*}
    \bJ_e = \lambda_\phi \bJ_\phi - \partial_t^\bullet \phi \frac{\partial \hat e}{\partial\nabla e} - \frac12 \bJ\abs{\bu}^2,
    \quad 
    \bJ = \rho'(\phi) \bJ_\phi,
    \quad 
    \bJ_\phi = - m(\phi) \nabla \lambda_\phi,
    \\
    \lambda_\phi = \frac{\partial \hat e}{\partial \phi} - \Div \frac{\partial \hat e}{\partial \nabla\phi},
    \quad 
    \mathbb{D}_d = \frac{1}{\tilde\lambda(\phi)} \mathbb{F}_e^{-1} \frac{\partial \hat e}{\partial \bbb} \mathbb{F}_e,
    \\
    \tilde \bT = \bbs - p \bbi
    - (\bu\otimes \bJ) 
    - \frac{\partial \hat e}{\partial \nabla\phi} \otimes \nabla\phi ,
    \quad 
    \bbs = \nu(\phi) (\nabla\bu + \tran{\nabla}\bu) 
    + 2 \frac{\partial \hat e}{\partial \bbb} \bbb ,
\end{gather*}
where $m(\phi), \tilde\lambda(\phi), \nu(\phi)$ are positive functions corresponding to a mobility, a relaxation and a viscosity, respectively. 
Here, also a relative mass flux $\bJ$ and the viscoelastic stress tensor $\bbs$ were introduced, also see \cite{AGG2012, MAA2018}.

The constitutive system of equations (with a general energy density) reads
\begin{subequations}
\label{eq:appendix-system}
\begin{alignat}{3}
\begin{split}
    \pt (\rho(\phi) \bu) + \Div ( \rho(\phi) \bu \otimes \bu ) & + \Div ( \bu \otimes \bJ ) + \nabla p 
    \\
    - \Div \big(\bbs(\nabla \bu, \bbb, \phi)\big) & = - \Div \Big( \frac{\partial \hat e}{\partial \nabla\phi} \otimes \nabla \phi \Big),
    \end{split}
   \\
    \Div \bu & = 0 ,
   \\
    \pt \bbb + \bu \cdot \nabla \bbb + \frac{2}{\tilde\lambda(\phi)} \frac{\partial \hat e}{\partial \bbb} \bbb  & = \bbb \tran{\nabla} \bu + \nabla \bu \bbb,
   \\
    \pt \phi + \bu \cdot \nabla \phi & = \Div (m(\phi) \nabla q),
   \\
    q &= \frac{\partial \hat e}{\partial \phi} - \Div \frac{\partial \hat e}{\partial \nabla\phi} \,.
\end{alignat}
\end{subequations}
Here, the local dissipation is given by
\begin{align*}
    \mathcal{D} 
    = \frac{\nu(\phi)}{2} \abs{ \nabla\bu + \tran{\nabla}\bu}^2
    + m(\phi) \abs{\nabla q}^2
    + \frac{2 }{\tilde\lambda(\phi)} \abs{ \tran{\bbf_e} \frac{\partial \hat e}{\partial \bbb} \bbf_e }^2 \geq 0.
\end{align*}
We note that the system \eqref{eqs:Model} can be recovered from \eqref{eq:appendix-system} with the specific choice of the free energy density 
\begin{align*}
    \hat e(\phi, \nabla\phi, \bbb) 
    = \frac{\tilde\sigma \epsilon}{2} \abs{\nabla\phi}^2
    + \frac{\tilde\sigma}{\epsilon} W(\phi)
    + \frac{\mu(\phi)}{2} \tr(\bbb - \ln \bbb - \bbi),
\end{align*}
and a rescaling of the relaxation function $\tilde \lambda(\phi) = \lambda(\phi) \mu(\phi) / \alpha(\phi)$.

\section{Compactness of Time-Averaged Terms}
\label{sec:compactness-B}
{
In this section, we give three technical lemmata, which are of much significance for the limit passage concerning time-averaged terms, in particular, the compactness of these terms. First, we give a uniform boundedness and compactness property. 
\begin{lemma}[Compactness in $ L^p $] 
	\label{lem:uniform-bound-time-averaged}
	Let $h = T/N$ with $N\in \bbn$ and $f \in L^p(0,T; X)$ be a bounded function for $ p \in(1, \infty) $, where $X$ is a Banach space.
	Define $\bar{f}^N(t) = \frac{1}{h} \int_{I_{k+1}} f(s) \,\d s $ for $t \in I_{k+1}$ as a piecewise-in-time constant function in $ (0,T) $. Then $ \{\bar{f}^N\}_{N\in\bbn} \subset L^p(0,T; X) $. Moreover, it holds $\bar{f}^N\to f$ in $L^p(0,T;X)$, as $N\to \infty$.
\end{lemma}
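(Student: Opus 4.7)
My plan is to split the proof into (i) a uniform bound for the averaging operator and (ii) a density argument combined with a direct estimate on continuous representatives.

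For the uniform bound, fix $N\in\bbn$, $h=T/N$, and let $t\in I_{k+1}$. Since $f\in L^p(0,T;X)\subset L^1(I_{k+1};X)$, the Bochner integral defining $\bar f^N(t)$ exists, and by Jensen's inequality applied to the convex map $r\mapsto r^p$ on $[0,\infty)$,
\[
\|\bar f^N(t)\|_X^p
\;\le\;\left(\frac1h\int_{I_{k+1}}\|f(s)\|_X\,\d s\right)^{\!p}
\;\le\;\frac1h\int_{I_{k+1}}\|f(s)\|_X^p\,\d s.
\]
Integrating this constant-in-$t$ bound over $I_{k+1}$ and summing over $k=0,\dots,N-1$ yields
\[
\|\bar f^N\|_{L^p(0,T;X)}^p
\;\le\;\|f\|_{L^p(0,T;X)}^p,
\]
so in particular $\bar f^N\in L^p(0,T;X)$ and the averaging operator $f\mapsto\bar f^N$ is a linear contraction on $L^p(0,T;X)$, uniformly in $N$.

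For convergence, I use the density of $C([0,T];X)$ in $L^p(0,T;X)$ for $p\in(1,\infty)$. Let $\varepsilon>0$ and pick $g\in C([0,T];X)$ with $\|f-g\|_{L^p(0,T;X)}<\varepsilon$. By the triangle inequality and the contraction property,
\[
\|\bar f^N-f\|_{L^p(0,T;X)}
\;\le\;\|\bar f^N-\bar g^N\|_{L^p(0,T;X)}+\|\bar g^N-g\|_{L^p(0,T;X)}+\|g-f\|_{L^p(0,T;X)}
\;\le\;2\varepsilon+\|\bar g^N-g\|_{L^p(0,T;X)}.
\]
Since $g$ is uniformly continuous on $[0,T]$ with values in $X$, its modulus of continuity $\omega_g$ satisfies $\omega_g(h)\to 0$ as $h\to 0$, and for any $t\in I_{k+1}$,
\[
\|\bar g^N(t)-g(t)\|_X
=\Bigl\|\frac1h\int_{I_{k+1}}(g(s)-g(t))\,\d s\Bigr\|_X
\;\le\;\omega_g(h).
\]
Hence $\|\bar g^N-g\|_{L^p(0,T;X)}\le T^{1/p}\omega_g(h)\to 0$ as $N\to\infty$, which gives $\limsup_{N\to\infty}\|\bar f^N-f\|_{L^p(0,T;X)}\le 2\varepsilon$. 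Letting $\varepsilon\to 0$ completes the proof.

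The argument is essentially routine; the only point that deserves care is Jensen's inequality in the Banach-valued setting, which is justified by first applying the norm (a convex, continuous sublinear functional) to reduce to the scalar case, and the use of density of $C([0,T];X)$ in $L^p(0,T;X)$, which is standard for strongly measurable Bochner-integrable functions. No substantive obstacle is expected.
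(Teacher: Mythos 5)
Your proof is correct and follows essentially the same approach as the paper: Jensen's inequality gives the uniform contraction bound, and a density argument upgrades convergence from a dense subspace. The only cosmetic difference is that you use $C([0,T];X)$ with a modulus of continuity and write out the three-epsilon argument explicitly, whereas the paper uses $C^\infty([0,T];X)$ (bounding the oscillation by $h\sup_t\|\partial_t f\|_X$) and invokes the Banach--Steinhaus theorem for the density-upgrade step.
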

\begin{proof}
	By virtue of \eqref{eqs:identity-integral-summation} and Jensen's inequality for the time integral (as $p \in(1, \infty) $), one knows that
	\begin{align*}
		\norm{\bar{f}^N}_{L^p(0,T; X) }^p
		= \int_0^T \norm{\bar{f}^N(s)}_{X}^p \,\d s
		& = h \sum_{k = 0}^{N - 1} \norm{\frac{1}{h} \int_{I_{k+1}} f(s) \,\d s}_{X}^p 
            \\ 
            &\leq h \sum_{k = 0}^{N - 1} \Big(\frac{1}{h} \int_{I_{k+1}} \norm{f(s)}_{X} \,\d s \Big)^p
  \\
		& \leq h \sum_{k = 0}^{N - 1} \frac{1}{h} \int_{I_{k+1}} \norm{f(s)}_{X}^p \,\d s
		= \int_0^T \norm{f(s)}_{X}^p \,\d s
		\leq C,
	\end{align*}
	where $ C > 0 $ is independent of $ h>0 $ (and $N\in\bbn$, respectively). The second part of the lemma follows directly from the uniform boundedness and the Banach--Steinhaus theorem, as the convergence result holds true for any $f\in C^\infty([0,T]; X) \subset L^p(0,T;X)$ densely, i.e., for any $f\in C^\infty([0,T]; X)$, it holds 
\begin{align*}
		\norm{\bar{f}^N - f}_{L^p(0,T; X) }^p
		= \int_0^T \norm{\bar{f}^N(s) - f(s)}_{X}^p \,\d s
		& = \sum_{k = 0}^{N - 1} \int_{I_{k+1}} \norm{\frac{1}{h} \int_{I_{k+1}} f(s) \,\d s - f(t)}_{X}^p \dt
  \\
  &=\sum_{k = 0}^{N - 1} \int_{I_{k+1}} \norm{\frac{1}{h} \int_{I_{k+1}} \big( f(s) - f(t)\big) \,\d s }_{X}^p \dt
  \\
  &\leq C h \sup_{t\in (0,T)} \norm{\partial_t f(t) }_{X}^p  \to 0,
\end{align*}
as $h\to 0$.
\end{proof}
\begin{remark}
    \label{rem:uniform-bound-time-averaged}
    The first part of the lemma also holds true for $f$ substituted by a uniformly bounded sequence $\{f^N\}_{N \in \bbn}$ in $L^p(0,T; X)$, that is, the sequence $\{\bar{f}^N\}_{N\in\bbn}$ with definition $\bar{f}^N(t) = \frac{1}{h} \int_{I_{k+1}} f^N(s) \,\d s$ for $t \in I_{k+1}$ is uniformly bounded in $L^p(0,T; X)$.
\end{remark}
Now concerning the weakly convergent sequences, we introduce the following lemma for the time-averaged functions.
\begin{lemma}[Weak compactness]
	\label{lem:weak-convergence-time-averaged}
    Let $h = T/N$ with $N\in \bbn$ and $\{f^N\}_{N \in \bbn} \subset L^2(Q_T)$ be a sequence satisfying $f^N \to f $ weakly in $L^2(Q_T)$, as $N \to \infty$ (resp.~$h \to 0$), with $f \in L^2(Q_T)$. Defining a piecewise-in-time constant function $\bar{f}^N(t) = \frac{1}{h} \int_{I_{k+1}} f^N(s) \,\d s $ for $t \in I_{k+1}$, we have for any function $\varphi \in L^2(Q_T)$ independent of $h$, 
    \begin{equation*}
        \int_{Q_T} \bar{f}^N \varphi \dxdt
        \to
        \int_{Q_T} f \varphi \dxdt, \text{ as } N \rightarrow \infty.
    \end{equation*}
\end{lemma}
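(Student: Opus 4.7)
The plan is to ``move'' the time-averaging from $f^N$ onto the test function $\varphi$, reducing the claim to a standard weak--strong pairing argument. The core identity is that, by Fubini's theorem,
\begin{equation*}
    \int_{Q_T} \bar{f}^N \varphi \dxdt
    = \sum_{k=0}^{N-1} \int_\Omega \int_{I_{k+1}} f^N(s) \left( \frac{1}{h} \int_{I_{k+1}} \varphi(t) \dt \right) \d s \dx
    = \int_{Q_T} f^N \bar{\varphi}^N \dxdt,
\end{equation*}
where $\bar{\varphi}^N$ is the piecewise-in-time constant function defined by $\bar{\varphi}^N(s) = \frac{1}{h}\int_{I_{k+1}} \varphi(t) \dt$ for $s \in I_{k+1}$. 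Thus, the roles of weak convergence and time-averaging can be exchanged onto the two factors.

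Next, I would invoke Lemma \ref{lem:uniform-bound-time-averaged}, applied to the fixed function $\varphi \in L^2(Q_T)$ with $p=2$ and $X = L^2(\Omega)$, to deduce $\bar{\varphi}^N \to \varphi$ strongly in $L^2(Q_T)$. Note that this step uses crucially that $\varphi$ is independent of $h$.

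Finally, I would split
\begin{equation*}
    \int_{Q_T} f^N \bar{\varphi}^N \dxdt
    = \int_{Q_T} f^N (\bar{\varphi}^N - \varphi) \dxdt
    + \int_{Q_T} f^N \varphi \dxdt.
\end{equation*}
The first term vanishes in the limit $N\to\infty$ by the product of a weakly convergent sequence in $L^2(Q_T)$ (which is therefore bounded) and a strongly convergent-to-zero sequence in $L^2(Q_T)$. The second term converges to $\int_{Q_T} f\varphi \dxdt$ directly from the hypothesis $f^N \rightharpoonup f$ in $L^2(Q_T)$, since $\varphi$ is admissible as a test element of the dual.

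I do not anticipate any substantial obstacle: the whole argument is a clean duality trick, and the only nontrivial ingredient is the already-established strong convergence of time averages in Lemma \ref{lem:uniform-bound-time-averaged}. The only care to be taken is to ensure that the time-averaging operator is self-adjoint in the appropriate pairing on each subinterval $I_{k+1}$, which is precisely what the Fubini identity in the first display encodes.
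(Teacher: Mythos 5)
Your proof is correct and follows essentially the same route as the paper: the Fubini identity that transfers the time-averaging onto the test function, the strong convergence $\bar{\varphi}^N \to \varphi$ from Lemma \ref{lem:uniform-bound-time-averaged}, and the weak--strong pairing. The only difference is that you spell out the final step via an explicit splitting $\int f^N(\bar\varphi^N - \varphi) + \int f^N\varphi$, which the paper leaves implicit.
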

\begin{proof}
    Define $\bar{\varphi}^N(t) = \frac{1}{h} \int_{I_{k+1}} \varphi(s) \,\d s$ for $t \in I_{k+1}$. In light of Lemma \ref{lem:uniform-bound-time-averaged}, $ \bar{\varphi}^N $ is uniformly bounded in $ L^2(Q_T) $ and it holds 
    \begin{equation*}
        \bar{\varphi}^N \rightarrow \varphi, \text{ strongly in } L^2(Q_T).
    \end{equation*}
    With the help of  \eqref{eqs:identity-integral-summation} and Fubini's theorem, we then have
    \begin{align*}
        \int_0^T \int_\Omega \bar{f}^N \varphi \dxdt
        & = \sum_{k = 0}^{N - 1} \int_{I_{k+1}} \int_{\Omega} \bigg(\Big(\frac{1}{h} \int_{I_{k+1}} f^N(s) \,\d s\Big) \widetilde{\varphi}_{k+1}(t)\bigg) \dxdt \\
        & = \sum_{k = 0}^{N - 1} \int_{\Omega} \Big(\frac{1}{h} \int_{I_{k+1}} f^N(s) \,\d s\Big) \Big(\int_{I_{k+1}} \widetilde{\varphi}_{k+1}(t) \dt\Big) \d x \\
        & = \sum_{k = 0}^{N - 1} \int_{\Omega} \Big(\int_{I_{k+1}} f^N(s) \,\d s \Big) \Big(\frac{1}{h} \int_{I_{k+1}} \widetilde{\varphi}_{k+1}(t) \dt\Big) \dx \\
        & = \int_\Omega \int_0^T f^N(s) \bar{\varphi}^N(s) \,\d s\d x
        \to \int_0^T \int_\Omega f \varphi \dxdt,
    \end{align*}
    as $N \to \infty$, where the last convergence holds true for weakly convergent $f^N$ and strongly convergent $ \bar{\varphi}^N $.
\end{proof}
We now write the time-averaged terms as a convolution with a Dirac sequence. Let $N\in \bbn$ and $h=T/N$. We define $\zeta^N(t) = \frac{1}{h} \chi_{(-\frac h2, \frac h2)}(t)$ for $t\in\bbr$, where $\chi_{I}(\cdot)$ denotes the characteristic function on a given interval $I\subset\bbr$. Let $X$ be a Banach space and $p \in [1,\infty)$. Then, for $f\in L^p(0,T;X)$ and $t\in (\tfrac h2, T-\tfrac h2)$, we rewrite the time average over the interval $(t-\tfrac h2, t+\tfrac h2)$ as a convolution with $\zeta^N$, i.e.,
\begin{align*}
    (\zeta^N * f) (t) \coloneqq
    \int_{\bbr} \zeta^N(t - s) f(s) \, \d s
    = \frac1h \int_{\bbr} \chi_{(t-\tfrac h2, t+\tfrac h2)}(s) f(s) \,\d s
    = \frac1h \int_{t-\tfrac h2}^{t+\tfrac h2} f(s) \,\d s.
\end{align*}
Here, the function $f$ is naturally extended with a zero outside of $(0,T)$. We note some properties from, e.g., \cite[Theorems 4.13 and 4.15]{Alt2016}, that we will use: 
\begin{itemize}
    \item $\{(\zeta^N*f)\}_{N\in\bbn} \subset L^p(0,T;X)$ with $(\zeta^N*f) \to f$ in $L^p(0,T;X)$, as $N\to \infty$.
    \item For $f,g\in L^1(0,T;X)$, it holds $\int_{\bbr} (\zeta^N *f)(t) g(t) \dt = \int_{\bbr} f(t) (\zeta^N *g)(t) \dt$.
\end{itemize}
Note that in fact the first one can be derived by the Lebesgue differentiation theorem for $p = 1$ and additionally with Jensen's inequality for $1 < p <\infty$. 
The second one is a direct consequence, since $\zeta^N$ is an even function by construction. 
}

{
Regarding the weak-$*$ convergence, we have the following lemma.
\begin{lemma}[Weak-$*$ compactness]
\label{lem:weak-*-convergence-time-averaged}
Let $h=T/N$ with $N\in \bbn$. Moreover, let $f^N \in L^\infty(-h,T)$, $N\in\bbn$, with $f^N \to f$ weakly-$*$ in $L^\infty(0,T)$, as $N\to \infty$ (resp.~$h\to 0$). 
Then, for all $\varsigma\in L^1(0,T)$, it holds as $N\to \infty$ (resp.~$h\to0$),
\begin{align*}
    \int_0^T \Big( \frac1h \int_{t-h}^t f^N(s) \,\d s  \Big) \varsigma(t) \dt \to \int_0^T f(t) \varsigma(t) \dt.
\end{align*}
\end{lemma}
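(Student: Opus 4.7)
The natural approach is to rewrite the backward time average as a convolution with the asymmetric Dirac sequence $\tilde{\zeta}^N(t) \coloneqq \frac{1}{h} \chi_{(0,h)}(t)$, which gives $\frac{1}{h}\int_{t-h}^t f^N(s)\,\d s = (\tilde{\zeta}^N * f^N)(t)$ for $t \in (0,T)$, after extending $f^N$ by zero to the left of $-h$. Then the plan is to transfer this convolution onto the test function $\varsigma$ via Fubini (an adjoint-type identity, as indicated in the bullet points preceding the lemma), so that the $N$-dependence can be moved to the strongly convergent factor.

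More concretely, let $\tilde{\varsigma} \coloneqq \varsigma \chi_{(0,T)}$, extended by zero to all of $\bbr$, and define
\begin{equation*}
    g^N(s) \coloneqq \frac{1}{h}\int_s^{s+h} \tilde{\varsigma}(t)\,\d t, \quad s\in\bbr.
\end{equation*}
Fubini's theorem yields
\begin{equation*}
    \int_0^T \Big(\frac{1}{h}\int_{t-h}^t f^N(s)\,\d s\Big) \varsigma(t)\,\d t = \int_{-h}^T f^N(s) g^N(s)\,\d s = \int_0^T f^N g^N\,\d s + \int_{-h}^0 f^N g^N\,\d s.
\end{equation*}
For the first term, an application of the Lebesgue differentiation theorem (or standard mollifier properties) to $\tilde{\varsigma}\in L^1(\bbr)$ gives $g^N \to \tilde{\varsigma}$ in $L^1(\bbr)$, hence $g^N \to \varsigma$ in $L^1(0,T)$. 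Combined with the weak-$*$ convergence $f^N \to f$ in $L^\infty(0,T)$, this gives
\begin{equation*}
    \int_0^T f^N g^N\,\d s \to \int_0^T f\varsigma\,\d s
\end{equation*}
by the standard weak-$*$ times strong duality.

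The main obstacle is the boundary remainder $\int_{-h}^0 f^N g^N\,\d s$, since the weak-$*$ limit $f$ is only defined on $(0,T)$ whereas $f^N$ carries data on $(-h,0)$. Here I would use uniform boundedness of $\{f^N\}$ in $L^\infty(-h,T)$ (which is guaranteed on $(0,T)$ by Banach--Steinhaus, and on $(-h,0)$ either by hypothesis or, in the applications of this lemma, by the fact that the values on $(-h,0)$ encode a fixed initial datum $f_0$ independent of $N$). Combined with the estimate
\begin{equation*}
    \int_{-h}^0 |g^N(s)|\,\d s \leq \frac{1}{h}\int_{-h}^0 \int_0^h |\varsigma(t)|\,\d t\,\d s = \int_0^h |\varsigma(t)|\,\d t \xrightarrow{h\to 0} 0,
\end{equation*}
by absolute continuity of the Lebesgue integral, this yields $\int_{-h}^0 f^N g^N\,\d s \to 0$. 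Summing the two contributions concludes the proof.
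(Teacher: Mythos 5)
Your proposal is correct and follows the same convolution-with-a-Dirac-sequence strategy as the paper: rewrite the backward time average as a convolution, move the mollifier onto the test function $\varsigma$ via Fubini, and conclude by weak-$*$/strong $L^1$ duality. You are in fact a bit more careful than the paper on two points: you use the asymmetric kernel $\tilde\zeta^N=\tfrac1h\chi_{(0,h)}$ directly and work out its adjoint explicitly (avoiding the paper's informal ``shift of $h/2$'' of the even kernel, which strictly speaking also shifts $\varsigma$), and you isolate and estimate the boundary contribution from $(-h,0)$ — correctly observing that this requires uniform boundedness of $f^N$ there, which the lemma does not state explicitly but which holds in its applications where the values on $(-h,0)$ are the fixed initial datum.
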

\begin{proof}
To prove the lemma, we rewrite the inner integral with the help of the Dirac sequence defined above with a shift of $\tfrac h2$. Then one applies the associativity of the convolution with the Dirac sequence, the second property above, to move the convolution to the test function $\varsigma\in L^1(0,T)$. Finally, combining with the weak-$*$ convergence of $f^N$ to $f$ in $L^\infty(0,T)$ and the first property above, we finish the proof.
Namely,
\begin{align*}
    \int_0^T \Big( \frac1h \int_{t-h}^t f^N(s) \,\d s  \Big) \varsigma(t) \dt
    & = \int_\bbr (\zeta^N * f^N) (t) \varsigma(t) \dt \\
    & = \int_\bbr f^N(t) (\zeta^N * \varsigma) (t) \dt 
    \to \int_\bbr f(t) \varsigma (t) \dt
    = \int_0^T f(t) \varsigma(t) \dt,
\end{align*}
as $N \to \infty$ (resp.~$h \to 0$).
\end{proof}
\begin{remark}
In principle, it is possible to generalize Lemmata \ref{lem:weak-convergence-time-averaged} and \ref{lem:weak-*-convergence-time-averaged} for a broader class of Lebesgue spaces. However, we only make use of these special cases in this work.
\end{remark}
We also note a compactness criterion with time translations, which can be found in, e.g., \cite[Section 8, Theorem 5]{Simon1985}.
\begin{lemma}\label{lem:compactness_translation}
Let $p\in [1,\infty]$ and let $X,Y,Z$ be Banach spaces with $X\subset\subset Y$ compactly and $Y\subset Z$ continuously. Moreover, let $\mathcal{F} \subset L^p(0,T;X)$ be a bounded set and $\norm{f(\cdot+h) - f}_{L^p(0,T-h; Z)} \to 0$ uniformly for $f\in \mathcal{F}$, as $h\to 0$. Then, $\mathcal{F}$ is relatively compact in $L^p(0,T;Y)$ if $p\in[1,\infty)$, and in $C([0,T]; Y)$ if $p=\infty$, respectively.
\end{lemma}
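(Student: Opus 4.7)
The plan is to follow the classical strategy of Simon \cite{Simon1985}. The main ingredients are (i) Ehrling's interpolation inequality based on the compact embedding $X \subset\subset Y$, (ii) approximation of each $f\in\mathcal{F}$ by a time mollification whose error is controlled in the weak space $Z$ through the translation hypothesis, and (iii) Ascoli--Arzel\`a applied to the mollified family in the target space $Y$.

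First, I would record Ehrling's lemma: for every $\varepsilon>0$ there exists $C_\varepsilon>0$ such that $\norm{u}_Y \leq \varepsilon\norm{u}_X + C_\varepsilon\norm{u}_Z$ for all $u\in X$. This follows by the standard contradiction argument from $X\subset\subset Y \subset Z$. Next, for $\eta\in(0,T)$ define the time-averaged mollification
\begin{equation*}
    f^\eta(t) \coloneqq \frac{1}{\eta}\int_0^\eta f(t+s)\,\d s, \qquad t\in(0,T-\eta),
\end{equation*}
for each $f\in\mathcal{F}$. By Minkowski's inequality in the convolution form, the translation hypothesis yields $\norm{f^\eta - f}_{L^p(0,T-\eta;Z)} \to 0$ as $\eta\to 0$, uniformly for $f\in\mathcal{F}$. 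Combining this with Ehrling's inequality applied pointwise and integrated in $t$ (using the uniform bound $\sup_{f\in\mathcal{F}}\norm{f}_{L^p(0,T;X)}\leq M$), I obtain
\begin{equation*}
    \norm{f^\eta - f}_{L^p(0,T-\eta;Y)} \leq 2\varepsilon M + C_\varepsilon\,\norm{f^\eta-f}_{L^p(0,T-\eta;Z)},
\end{equation*}
so that $\mathcal{F}_\eta \coloneqq \{f^\eta : f\in\mathcal{F}\}$ approximates $\mathcal{F}$ uniformly in $L^p(0,T-\eta;Y)$ (resp.~$L^\infty$) as $\eta\to 0$ and $\varepsilon\to 0$.

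Then I would show that for each fixed $\eta>0$ the family $\mathcal{F}_\eta$ is relatively compact in $C([0,T-\eta];Y)$, which in particular gives compactness in $L^p(0,T-\eta;Y)$. Indeed, Jensen's inequality yields pointwise bounds of $f^\eta(t)$ in $X$ uniformly for $f\in\mathcal{F}$, and by $X\subset\subset Y$ the set $\{f^\eta(t):f\in\mathcal{F},\,t\in[0,T-\eta]\}$ is relatively compact in $Y$. Equicontinuity in $Y$ follows from the estimate
\begin{equation*}
    \norm{f^\eta(t) - f^\eta(t')}_Z \leq C(\eta,p)\,\norm{f(\cdot + (t-t')) - f(\cdot)}_{L^p(0,T;Z)},
\end{equation*}
together with the translation hypothesis and a second application of Ehrling (absorbing the $X$-part using the uniform $L^p(0,T;X)$-bound). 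Ascoli--Arzel\`a then yields the claimed relative compactness. Finally, the uniform approximation of $\mathcal{F}$ by $\mathcal{F}_\eta$ combined with total boundedness of $\mathcal{F}_\eta$ gives that $\mathcal{F}$ itself is totally bounded in $L^p(0,T-\eta;Y)$ for every $\eta>0$; a diagonal extraction over a sequence $\eta_n\to 0$, together with the $L^p(0,T;X)$-bound to control the residual strip $(T-\eta_n,T)$, extends this to $L^p(0,T;Y)$. For $p=\infty$ the same scheme applies with Ascoli--Arzel\`a used directly in $C([0,T];Y)$, since the translation hypothesis then delivers a modulus of continuity uniform in $\mathcal{F}$.

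The main subtlety, and hence the principal obstacle, is the careful balancing in the Ehrling step: the $X$-norm term must be absorbed by the small parameter $\varepsilon$ using only the uniform a priori bound on $\mathcal{F}$, while the $Z$-term is made small via the translation hypothesis with $\eta$ small after $\varepsilon$ has been fixed. This two-scale choice (first $\varepsilon$, then $\eta$) and the corresponding diagonal argument when passing from $L^p(0,T-\eta;Y)$ to $L^p(0,T;Y)$ are the heart of the proof; once organised, the remaining steps are routine functional-analytic arguments.
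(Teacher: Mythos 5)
The paper does not prove this lemma; it is cited directly from Simon \cite[Section 8, Theorem 5]{Simon1985} and used as a black box, so there is no in-paper argument to compare against. Your proposal amounts to reproving the criterion from first principles, and the scaffold --- Ehrling's lemma, forward time-averaging $f^\eta$, Ascoli--Arzel\`a for the mollified family, then approximation of $\mathcal{F}$ by $\mathcal{F}_\eta$ --- is precisely the classical Simon strategy. The steps are sound through the total boundedness of $\mathcal{F}$ restricted to $(0,T-\eta)$ in $L^p(0,T-\eta;Y)$ for every $\eta>0$.

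There is, however, a genuine gap in the final patching step for $p<\infty$. You assert that the $L^p(0,T;X)$-bound ``controls the residual strip $(T-\eta_n,T)$.'' It does not: the bound $\int_{T-\eta}^T\norm{f(t)}_X^p\,\d t\leq M^p$ exhibits no decay as $\eta\to 0$, so total boundedness on every $(0,T-\eta)$ does not by itself yield total boundedness on $(0,T)$ --- one can easily cook up a uniformly bounded family in $L^p(0,T;X)$ that concentrates near $t=T$. What is actually needed is the uniform tail estimate $\sup_{f\in\mathcal{F}}\norm{f}_{L^p(T-\eta,T;Y)}\to 0$ as $\eta\to 0$, and this requires the translation hypothesis once more, not merely the bound in $X$ (indeed, the concentrating example violates the translation hypothesis). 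One standard way to obtain it: estimate $\norm{f(t)}_Z\leq\norm{f(t)-f(t-h)}_Z+\norm{f(t-h)}_Z$, average over $h\in(0,\sigma)$, and use Fubini to bound the averaged second term by $(\eta/\sigma)^{1/p}\norm{f}_{L^p(0,T;Z)}\leq C(\eta/\sigma)^{1/p}M$; first fix $\sigma$ so that the translation term is small, then send $\eta\to 0$; Ehrling then lifts the resulting smallness from $Z$ to $Y$ by absorbing the $X$-part with $\varepsilon M$. A cleaner organization, which sidesteps the endpoint issue entirely, is to reduce directly to Simon's Theorem 5 applied with $B=Y$: condition (i) there ($\{\int_{t_1}^{t_2}f\,\d t : f\in\mathcal{F}\}$ relatively compact in $Y$) follows from $\norm{\int_{t_1}^{t_2}f\,\d t}_X\leq(t_2-t_1)^{1-1/p}M$ and $X\subset\subset Y$, and condition (ii) (translation in $Y$) follows from a single application of Ehrling, $\norm{\tau_h f-f}_{L^p(0,T-h;Y)}\leq 2\varepsilon M+C_\varepsilon\norm{\tau_h f-f}_{L^p(0,T-h;Z)}$, fixing $\varepsilon$ first and then sending $h\to 0$.
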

}

\section*{Acknowledgments}
{The authors are grateful to the referees for the careful reading of the paper and valuable suggestions, which improved remarkably the presentation of the manuscript.} 
YL and DT would like to thank their supervisors Prof. Dr. Helmut Abels and Prof. Dr. Harald Garcke, respectively, for their fruitful and inspiring discussions, as well as the careful proofreadings. 
The thanks also go to  Jonas Haselb\"{o}ck and Christoph Hurm for useful discussions.

YL is supported by the startup funding from Nanjing Normal University -- Project-ID 184080H201B90, the Natural Science Foundation of Jiangsu Province (No.~BK20240572), and the Natural Science Foundation of the Jiangsu Higher Education Institutions of China (No.~24KJB110020).
The authors were both partially supported by the Graduiertenkolleg 2339 \textit{IntComSin} of the Deutsche Forschungsgemeinschaft (DFG, German Research Foundation) -- Project-ID 321821685. The support is gratefully acknowledged.

\section*{Data Availability}
Data sharing not applicable to this article as no datasets were generated during the current study.

\section*{Conflict of Interest}
All authors declare that they have no conflicts of interest.


\begin{thebibliography}{999}
	
	\bibitem{Abels2009ARMA}
	H. Abels, On a diffuse interface model for two-phase flows of viscous, incompressible fluids with matched densities, Arch. Ration. Mech. Anal. {\bf 194} (2009), no.~2, 463--506.
	
	\bibitem{Abels2009CMP}
	H. Abels, Existence of weak solutions for a diffuse interface model for viscous, incompressible fluids with general densities, Comm. Math. Phys. {\bf 289} (2009), no.~1, 45--73.
	
	\bibitem{ADG2013}
	H. Abels, D. Depner\ and\ H. Garcke, Existence of weak solutions for a diffuse interface model for two-phase flows of incompressible fluids with different densities, J. Math. Fluid Mech. {\bf 15} (2013), no.~3, 453--480.
	
	\bibitem{ADG2013degenerate}
	H. Abels, D. Depner\ and\ H. Garcke, On an incompressible Navier-Stokes/Cahn-Hilliard system with degenerate mobility, Ann. Inst. H. Poincar\'{e} C Anal. Non Lin\'{e}aire {\bf 30} (2013), no.~6, 1175--1190.
	
	\bibitem{AGG2022}
	H. Abels, H. Garcke\ and\ A. Giorgini, Global regularity and asymptotic stabilization for the incompressible Navier-Stokes-Cahn-Hilliard model with unmatched densities, Math. Ann. {\bf 389} (2024), no.~2, 1267--1321.
	
	\bibitem{AGG2012}
	H. Abels, H. Garcke\ and\ G. Gr\"{u}n, Thermodynamically consistent, frame indifferent diffuse interface models for incompressible two-phase flows with different densities, Math. Models Methods Appl. Sci. {\bf 22} (2012), no.~3, 1150013, 40 pp. 
	
	\bibitem{AW2007}
	H. Abels\ and\ M. Wilke, Convergence to equilibrium for the Cahn-Hilliard equation with a logarithmic free energy, Nonlinear Anal. {\bf 67} (2007), no.~11, 3176--3193. 
	
	\bibitem{ACGR2022}
	A. Agosti, P. Colli, H. Garcke\ and\ E. Rocca, A Cahn-Hilliard model coupled to viscoelasticity with large deformations, Commun. Math. Sci. {\bf 21} (2023), no.~8, 2083--2130.

    \bibitem{Alt2016}
    H. W. Alt, {\it Linear functional analysis}, translated from the German edition by Robert N\"{u}rnberg, Universitext, Springer-Verlag London, Ltd., London, 2016.
 
	\bibitem{BB2011}
	J. W. Barrett\ and\ S. Boyaval, Existence and approximation of a (regularized) Oldroyd-B model, Math. Models Methods Appl. Sci. {\bf 21} (2011), no.~9, 1783--1837. 
		
	\bibitem{BLS2017}
	J. W. Barrett, Y. Lu\ and\ E. S\"{u}li, Existence of large-data finite-energy global weak solutions to a compressible Oldroyd-B model, Commun. Math. Sci. {\bf 15} (2017), no.~5, 1265--1323.
	
	\bibitem{BBM2021}
	M. Bathory, M. Bul\'{\i}\v{c}ek\ and\ J. M\'{a}lek, Large data existence theory for three-dimensional unsteady flows of rate-type viscoelastic fluids with stress diffusion, Adv. Nonlinear Anal. {\bf 10} (2021), no.~1, 501--521.

 
	\bibitem{BS2018}
	D. Breit\ and\ S. Schwarzacher, Compressible fluids interacting with a linear-elastic shell, Arch. Ration. Mech. Anal. {\bf 228} (2018), no.~2, 495--562.
	
	
	\bibitem{B+2021}
        A. Brunk et al., Analysis of a viscoelastic phase separation model, J. Phys.: Condens. Matter {\bf 33} (2021), no.~23, 234002.
    
    \bibitem{BL2022a}
    A. Brunk\ and\ M. Luk\'{a}\v{c}ov\'{a}-Medvid'ov\'{a}, Global existence of weak solutions to viscoelastic phase separation part: I. Regular case, Nonlinearity {\bf 35} (2022), no.~7, 3417--3458.
    
    \bibitem{BL2022b}
    A. Brunk\ and\ M. Luk\'{a}\v{c}ov\'{a}-Medvid'ov\'{a}, Global existence of weak solutions to viscoelastic phase separation: part II. Degenerate case, Nonlinearity {\bf 35} (2022), no.~7, 3459--3486.
    
	\bibitem{BLL2022}
	A. Brunk, Y. Lu\ and\ M. Luk\'{a}\v{c}ov\'{a}-Medvi\v{d}ov\'{a}, Existence, regularity and weak-strong uniqueness for three-dimensional Peterlin viscoelastic model, Commun. Math. Sci. {\bf 20} (2022), no.~1, 201--230.


    \bibitem{BLM2024}
   M. Bul\'{\i}\v{c}ek, T. Los\ and\ J. M\'{a}lek, On three-dimensional flows of viscoelastic fluids of
Giesekus type, (2024), Preprint, \arxiv{2403.17348}.
 
	\bibitem{Ciarlet2013}
	P. G. Ciarlet, {\it Linear and nonlinear functional analysis with applications}, Society for Industrial and Applied Mathematics, Philadelphia, PA, 2013.
	
	\bibitem{CK2012}
	P. Constantin\ and\ M. Kliegl, Note on global regularity for two-dimensional Oldroyd-B fluids with diffusive stress, Arch. Ration. Mech. Anal. {\bf 206} (2012), no.~3, 725--740. 
	
	\bibitem{FN2017}
	E. Feireisl\ and\ A. Novotn\'{y}, {\it Singular limits in thermodynamics of viscous fluids}, second edition, Advances in Mathematical Fluid Mechanics, Birkh\"{a}user/Springer, Cham, 2017.
	
	\bibitem{Frigeri2016}
	S. Frigeri, Global existence of weak solutions for a nonlocal model for two-phase flows of incompressible fluids with unmatched densities, Math. Models Methods Appl. Sci. {\bf 26} (2016), no.~10, 1955--1993.
	
	
	\bibitem{GGW2019}
	C. G. Gal, M. Grasselli\ and\ H. Wu, Global weak solutions to a diffuse interface model for incompressible two-phase flows with moving contact lines and different densities, Arch. Ration. Mech. Anal. {\bf 234} (2019), no.~1, 1--56.
	
	\bibitem{GKT2022}
	H. Garcke, B. Kov\'{a}cs\ and\ D. Trautwein, Viscoelastic Cahn--Hilliard models for tumour growth, Math. Models Methods Appl. Sci. {\bf 32} (2022), no.~13, 2673--2758.
	
	
	\bibitem{GKL2018}
	M.-H. Giga, A. Kirshtein\ and\ C. Liu, Variational modeling and complex fluids, in {\it Handbook of mathematical analysis in mechanics of viscous fluids}, 73--113, Springer, Cham, 2018. 
	
	\bibitem{Giorgini2021}
	A. Giorgini, Well-posedness of the two-dimensional Abels-Garcke-Gr\"{u}n model for two-phase flows with unmatched densities, Calc. Var. Partial Differential Equations {\bf 60} (2021), no.~3, Paper No. 100, 40 pp.
	
	\bibitem{GK2022}
	A. Giorgini\ and\ P. Knopf, Two-phase flows with bulk-surface interaction: thermodynamically consistent Navier-Stokes-Cahn-Hilliard models with dynamic boundary conditions, J. Math. Fluid Mech. {\bf 25} (2023), no.~3, Paper No. 65, 44 pp.

	

        \bibitem{GO1997a}
        M. Gmerla, H. \"{O}ttinger, Dynamics and thermodynamics of complex fluids. {I}.
              {D}evelopment of a general formalism, Phys. Rev. E (3) {\bf 56} (1997), no.~6, 6620--6632.

        \bibitem{GO1997b}
        M. Gmerla, H. \"{O}ttinger, Dynamics and thermodynamics of complex fluids. {II}.
              {I}llustrations of a general formalism, Phys. Rev. E (3) {\bf 56} (1997), no.~6, 6633--6655.

    \bibitem{GL2017}
    H. Garcke and K. F. Lam, {Well--posedness of a Cahn--Hilliard system modelling tumour growth with chemotaxis and active transport}, European J. Appl. Math. {\bf 28} (2017), no.~2, 284--316.
    
    \bibitem{GM2016}
    G. Gr\"{u}n\ and\ S. Metzger, On micro-macro-models for two-phase flow with dilute polymeric solutions---modeling and analysis, Math. Models Methods Appl. Sci. {\bf 26} (2016), no.~5, 823--866.
    
	\bibitem{GPV1996}
	M. E. Gurtin, D. Polignone\ and\ J. Vi\~{n}als, Two-phase binary fluids and immiscible fluids described by an order parameter, Math. Models Methods Appl. Sci. {\bf 6} (1996), no.~6, 815--831.

    \bibitem{GT2024}
    H. Garcke and D. Trautwein, Approximation and existence of a viscoelastic phase-field model for tumour growth in two and three dimensions, Discrete Contin. Dyn. Syst. Ser. S {\bf 17} (2024), no.~1, 221--284.
 
	\bibitem{HH1977}
	P. C. Hohenberg\ and\ B. I. Halperin, Theory of dynamic critical phenomena, Rev. Mod. Phys. {\bf 49} (1977), 435–479.
	
	\bibitem{HL2007}
	D. Hu\ and\ T. Leli\`evre, New entropy estimates for Oldroyd-B and related models, Commun. Math. Sci. {\bf 5} (2007), no.~4, 909--916.
	
	\bibitem{HL2016Lp}
	X. Hu\ and\ F. Lin, Global solutions of two-dimensional incompressible viscoelastic flows with discontinuous initial data, Comm. Pure Appl. Math. {\bf 69} (2016), no.~2, 372--404.
	

	
	\bibitem{HW2015}
	X. Hu\ and\ H. Wu, Long-time behavior and weak-strong uniqueness for incompressible viscoelastic flows, Discrete Contin. Dyn. Syst. {\bf 35} (2015), no.~8, 3437--3461.
	
	\bibitem{KMS2021}
	M. Kalousek, S. Mitra\ and\ A. Schl\"omerkemper, Existence of weak solutions to a diffuse interface model involving magnetic fluids with unmatched densities, NoDEA Nonlinear Differential Equations Appl. {\bf 30} (2023), no.~4, Paper No.~52, 53 pp.
	
	\bibitem{KTT2022}
	W. Kim, K. Tawri\ and\ R. Temam, Local well-posedness of a three-dimensional phase-field model for thrombus and blood flow, Rev. R. Acad. Cienc. Exactas F\'{\i}s. Nat. Ser. A Mat. RACSAM {\bf 116} (2022), no.~4, Paper No. 149, 23 pp.
	
	\bibitem{LeiLZ2008}
	Z. Lei, C. Liu\ and\ Y. Zhou, Global solutions for incompressible viscoelastic fluids, Arch. Ration. Mech. Anal. {\bf 188} (2008), no.~3, 371--398.
	
	\bibitem{LinLZ2005}
	F. Lin, C. Liu\ and\ P. Zhang, On hydrodynamics of viscoelastic fluids, Comm. Pure Appl. Math. {\bf 58} (2005), no.~11, 1437--1471.
	
	\bibitem{LZ2008}
	F. Lin\ and\ P. Zhang, On the initial-boundary value problem of the incompressible viscoelastic fluid system, Comm. Pure Appl. Math. {\bf 61} (2008), no.~4, 539--558.
	
	\bibitem{LR2014}
	D. Lengeler\ and\ M. R\r{u}\v{z}i\v{c}ka, Weak solutions for an incompressible Newtonian fluid interacting with a Koiter type shell, Arch. Ration. Mech. Anal. {\bf 211} (2014), no.~1, 205--255.
	
	\bibitem{LM2000}
	P. L. Lions\ and\ N. Masmoudi, Global solutions for some Oldroyd models of non-Newtonian flows, Chinese Ann. Math. Ser. B {\bf 21} (2000), no.~2, 131--146.

    \bibitem{Liu1972}
    I S. Liu, Method of Lagrange multipliers for exploitation of the entropy principle, Arch. Rational Mech. Anal. {\bf 46} (1972), no.~2, 131--148.

 
	\bibitem{LT1998}
	J. Lowengrub\ and\ L. Truskinovsky, Quasi-incompressible Cahn-Hilliard fluids and topological transitions, R. Soc. Lond. Proc. Ser. A Math. Phys. Eng. Sci. {\bf 454} (1998), no.~1978, 2617--2654.
	
	\bibitem{LMNR2017}
	M. Luk\'{a}\v{c}ov\'{a}-Medvi\v{d}ov\'{a}, H. Mizerov\'{a}, \v{S}. Ne\v{c}asov\'{a}\ and\ M. Renardy, Global existence result for the generalized Peterlin viscoelastic model, SIAM J. Math. Anal. {\bf 49} (2017), no.~4, 2950--2964.
	
	\bibitem{Masmoudi2013}
	N. Masmoudi, Global existence of weak solutions to the FENE dumbbell model of polymeric flows, Invent. Math. {\bf 191} (2013), no.~2, 427--500.
	
	\bibitem{MP2018}
	J. M\'{a}lek\ and\ V. Pr\r{u}\v{s}a, Derivation of equations for continuum mechanics and thermodynamics of fluids, in {\it Handbook of mathematical analysis in mechanics of viscous fluids}, 3--72, Springer, Cham, 2018.
	
	\bibitem{MPSS2018}
	J. M\'{a}lek, V. Pr\r{u}\v{s}a, T. Sk\v{r}ivan\ and\ E. S\"uli, Thermodynamics of viscoelastic rate-type fluids with stress diffusion, Physics of Fluids {\bf 30} (2018), no.~2, 023101.
	
	\bibitem{MAA2018}
	D. Mokbel, H. Abels\ and\ S. Aland, A phase-field model for fluid-structure interaction, J. Comput. Phys. {\bf 372} (2018), 823--840.
	
    
    \bibitem{Oldroyd1950}
    J. G. Oldroyd, On the formulation of rheological equations of state, Proc. Roy. Soc. London Ser. A {\bf 200} (1950), 523--541.
    
    \bibitem{Onsager1932}
    L. Onsager, Reciprocal relations in irreversible processes I., Phys. Rev. {\bf 37} (1932), no.~4, 405--426.
 
	\bibitem{RT2021}
	M. Renardy\ and\ B. Thomases, A mathematician's perspective on the Oldroyd B model: progress and future challenges, J. Non-Newton. Fluid Mech. {\bf 293} (2021), Paper No. 104573, 12 pp.
	
	
	\bibitem{Sieber2020}
	O. Sieber, On convergent schemes for a two-phase Oldroyd-B type model with variable polymer density, J. Numer. Math. {\bf 28} (2020), no.~2, 99--129.

    \bibitem{Simon1985}
    J. Simon, Compact sets in the space $L^p(0,T;B)$, Ann. Mat. Pura Appl. (4) {\bf 146} (1987), 65--96.
    
	\bibitem{Sohr2001}
	H. Sohr, {\it The Navier-Stokes equations}, [2013 reprint of the 2001 original] [MR 1928881], Modern Birkh\"{a}user Classics, Birkh\"{a}user/Springer Basel AG, Basel, 2001.
	
	\end{thebibliography}
\end{document}